\newcommand\mscriptsize[1]{\mbox{\scriptsize\ensuremath{#1}}}
\newcommand{\bs}[1]{\boldsymbol{#1}}
\newcommand{\R}{{\mathbb R}}
\newcommand{\D}{{\mathbf D}}
\newcommand{\Z}{{\mathbf Z}}
\newcommand{\ES}{{\mathbf S}}
\newcommand{\VV}{{\mathcal V}}
\newcommand{\WW}{{\mathcal W}}
\newcommand{\Fix}{\mathrm{Fix}}
\newcommand{\NP}{\bs{O}}
\newcommand{\ONE}{{\mathbf 1}}
\newcommand{\Na}{{N_{\rm a}}}
\newcommand{\No}{{N_{\rm o}}}
\newcommand{\Xx}{\bs{X}}
\newcommand{\Oo}{\bs{O}}
\newcommand{\Zz}{\bs{Z}}
\newcommand{\Am}[4]{A_{#1#2}^{#3#4}}
\newcommand\Tstrut{\rule{0pt}{2.6ex}}         
\newcommand\Bstrut{\rule[-0.9ex]{0pt}{0pt}}   
\crefname{hypothesis}{Hypothesis}{Hypotheses}
\title{A model-independent theory of consensus and dissensus decision making\thanks{Submitted \today.
\funding{This research has been supported in part by NSF grant CMMI-1635056, ONR grant N00014-19-1-2556, ARO grant W911NF-18-1-0325, DGAPA-UNAM PAPIIT grant IN102420, and Conacyt grant A1-S-10610. This material is also based upon work supported by the National Science Foundation Graduate Research Fellowship under Grant No. (NSF grant number).}}}
\author{Alessio Franci\thanks{Department of Mathematics, National Autonomous University of Mexico, 04510 Mexico City, Mexico. (\email{afranci@ciencias.unam.mx}, \url{https://sites.google.com/site/francialessioac/})}
\and Martin Golubitsky\thanks{Department of Mathematics, Ohio State University, Columbus, OH, 43210-1174 USA.
  (\email{golubitsky.4@osu.edu}).}
\and Anastasia~Bizyaeva\thanks{Department
	of Mechanical and Aerospace Engineering, Princeton University, Princeton,
	NJ, 08544 USA. (\email{bizyaeva@princeton.edu})}
\and Naomi Ehrich Leonard\thanks{Department
	of Mechanical and Aerospace Engineering, Princeton University, Princeton,
	NJ, 08544 USA.(\email{naomi@princeton.edu})}
}
\begin{document}

\maketitle

\begin{abstract}
We develop a model-independent framework to study the dynamics of decision-making in opinion networks for an arbitrary number of agents and an arbitrary number of options. Model-independence means that the analysis is not performed on a specific set of equations, in contrast to classical approaches to decision making that fix a specific model and analyze it. Rather, the general features of decision making in dynamical opinion networks can be derived starting from empirically testable hypotheses about the deciding agents, the available options, and the interactions among them. After translating these empirical hypotheses into algebraic ones, we use the tools of equivariant bifurcation theory to uncover model-independent properties of dynamical opinion networks. The model-independent results are illustrated on a novel analytical model that is constructed by plugging a generic sigmoidal nonlinearity, modeling boundedness of opinions and opinion perception, into the model-independent equivariant structure. Our analysis reveals richer and more flexible opinion-formation behavior as compared to model-dependent approaches. For instance, analysis reveals the possibility of switching between consensus and various forms of dissensus by modulation of the level of agent cooperativity and without requiring any particular ad-hoc interaction topology (e.g., structural balance).
From a theoretical viewpoint, we prove new results in equivariant bifurcation theory. We recall that the equivariant branching lemma states that generically each axial subgroup leads to the existence of  a branch of equilibria with symmetries given by that axial subgroup. Here we construct an exhaustive list of axial subgroups for the action of $\ES_n \times \ES_3$ on $\R^{n-1}\otimes\R^{2}$, which provides the dissensus decision behaviors for $n$ agents deciding about three options or for three agents deciding about $n$ options. We also generalize this list to the action of $\ES_n \times \ES_k$ on $\R^{n-1}\otimes \R^{k-1}$, i.e., for $n$ agents and $k$ options, although without proving that in this case the list is exhaustive.
\end{abstract}

\begin{keywords}
  dynamical opinion networks, equivariant bifurcation theory, computational modeling
\end{keywords}

\begin{AMS}
  91D30, 37G40, 37Nxx
\end{AMS}

\section{Introduction}

Virtually any collective system of active agents must make decisions and, as a preliminary step, form opinions about a set of possible options. Modern humans collectively select a leader or form opinions about social and economic issues with different types of electoral and information-sharing systems~\cite{Sabatier2019,Leifeld2016}. Present and past hunter-gatherer communities collectively decide where to collect and hunt~\cite{Pacheco2019b,Maya2019}. Bees collectively decide on a new nest once the one they occupy is exhausted or overpopulated~\cite{Seeley2012,Seeley2010}. Animal groups collectively decide if and in which direction to move, e.g., when approaching two possible food sources and the movement direction determines where and what they will eat~\cite{Couzin2005a,Couzin2011,Conradt2009,Krause2002,Stephens2008}. Neurons in lower brain areas integrate sensory inputs to perform perceptual and motor behavior decision making, while neurons in higher brain areas integrate sensory-motor information to make higher-level decisions and allocate attention and other computational brain resources~\cite{Busemeyer2019,Deco2013,Francis2018,Gallivan2018,Latimer2015,Rossi-Pool2017,Musslick2019,Huguet2014,Bogacz2007,Diekman2012,Golubitsky2019,Tajima2019}. Bacteria and other social micro-organisms collectively decide, e.g., using quorum sensing, how and when to undergo phenotypic differentiation in response to environmental signals like nutrient scarcity or the presence of antibiotic~\cite{Weitz2008,Waters2005,Balazsi2011,Ferrell1998,Toda2018,Guzman2020,Bottagisio2019}. A variety of different models have been developed to understand the origin of consensus and dissensus opinion formation and decision making in a model-specific fashion~\cite{Castellano2009,Evans2018,Li2013,Stewart2019,Vasconcelos2019,Fortunato2005,Nedic2012,Parsegov2017,Altafini2013,Blondel2009,DeGroot1974,Friedkin1999,Hegselmann2002,Ye2019,Jia2015,Jia2019,Lorenz2007,Pan2018,Galam1995,Galam2007,Cisneros2019}. However, a general theoretical framework, suitable to identify basic shared mechanisms as well as system-specific differences, is largely missing.


Opinion changes in a multi-agent, multi-option decision-making process can be viewed as a transition in state of a continuous state-space dynamical system driven by parameters and time-dependent external inputs. The state represents the opinions of the agents about the options. The parameters and external inputs represent possibly time-varying features of the agents (e.g., resistance to forming or changing an opinion), options (e.g., absolute value or quality), context (e.g., an approaching time deadline or an approaching spatial boundary), or exchange (e.g., the attention the agents pay to each other). Qualitative transitions in the agents' opinion state determine changes in the decision outcome. For instance, a group of three agents (e.g., three people) can start a decision process over three options (e.g., three restaurants) with no marked initial preferences. Discussing and accumulating evidence (e.g., checking menus and prices on their phones), the three agents start to develop a preference for the first alternative (e.g., the Argentinian grill). They are on the verge of making a decision (e.g., reserving a table), when the second agent brings new evidence against the chosen option (e.g., she remembers that a friend who is joining for dinner is vegetarian). The first and third agents remain in favor of the first option, whereas the second agent becomes conflicted between the remaining two options. Depending on individual characteristics and factors associated with the setting, the agents may continue discussing to reach a consensus on one of the options or they may reach a dissensus by choosing different options (e.g., deciding to split into vegetarian and non-vegetarian groups).

The prediction of state transitions as a function of parameter variations and inputs is the subject of bifurcation theory: a bifurcation diagram expresses the set of solutions as a bifurcation parameter (or input) varies. Recent works have used bifurcation theory to model decision making in a variety of biological and social systems~\cite{Diekman2012,Gray2018,Lee2014,Leonard2012,Lucas2013,Ozcimder2019,Pais2013,Reina2017,Pinkoviezky2018,You2013,Gekle2005,Deco2013,Nabet2009,Fontan2017,Galam2007}. Figure~\ref{FIG: bif decision sketch} summarizes the general idea behind those works. The distinguished bifurcation parameter $\lambda$ rules the transition from an unopinionated to an opinionated state. The remaining auxiliary (or, unfolding) parameters determine the qualitative properties of this transition. The distinction between the two types of parameters, or inputs, calls for the use of the bifurcation theory methods developed in~\cite{Golubitsky1985,Golubitsky1985-2}.

\begin{figure}
	\centering
	\includegraphics[width=\textwidth]{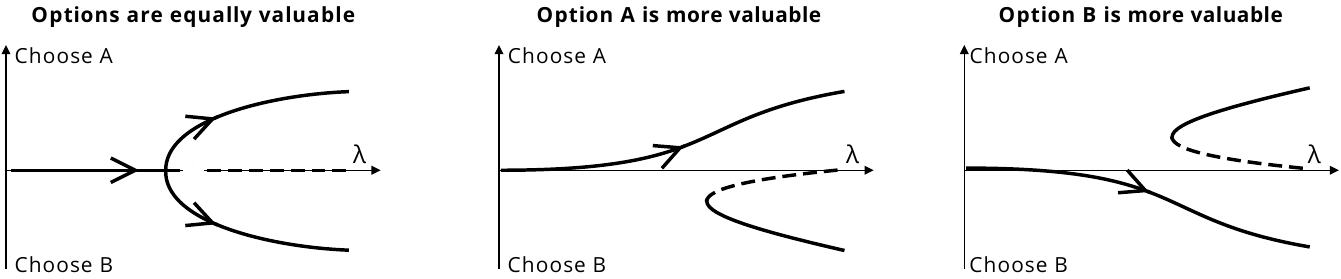}
	\caption{\footnotesize Decision making as a bifurcation phenomenon. Three bifurcation diagrams are shown with a collective opinion variable on the vertical axis and bifurcation parameter $\lambda$ on the horizontal axis. Solid (dashed) lines are stable (unstable) opinion state. Arrows show flow of the stable quasi steady-state as $\lambda$ slowly increases.  $\lambda$ models a feature of the agents, options, context, or exchange, which can change over time. The idealization that options are equally valuable leads to symmetric bifurcation problems and pitchfork bifurcations. In a pitchfork bifurcation, spontaneous symmetry breaking leads to symmetric (equally valuable) solution branches. However, when the idealizations are only approximately valid, generic forced symmetry breaking leads to asymmetric solution branches by opening (or unfolding) the pitchfork in the direction of the more valuable option.
	} \label{FIG: bif decision sketch}
\end{figure}

A bifurcation corresponds to a point in the state and parameter space where the multiplicity, the stability, or both, of solutions change.  In the idealized symmetric case of two equally valuable options, the indecisive state can lose stability through {\it spontaneous symmetry breaking} and bifurcate into two symmetric opinion-formation branches, each favoring one of the options. As illustrated in the left diagram, the symmetry of the equations does not change, but the symmetry of the bifurcating solutions do.  Which branch is followed depends on initial conditions that might, for example, be caused by random perturbations, a coin-flipping process. When options differ slightly in value, {\it forced symmetry breaking} changes the symmetry of the equations and can change the structure of the bifurcation diagram, as illustrated by the right two diagrams. This symmetry-breaking perturbation leads to asymmetric opinion-formation branches.  The symmetry of solutions is broken in favor of the more valuable option.

Near a symmetry-breaking singularity the system has a heightened sensitivity, which explains how a decision-making system can be highly responsive to changes in parameters and external inputs, such as individual biases and option values. Away from the singularity, the bifurcation diagram is robust, which explains how a decision-making system can reject disturbances. In this way, bifurcation theory reveals how decision-making systems can tune the balance between flexibility and robustness~\cite{Bizyaeva2020,Gray2018}.

We take a step toward the development of a general theoretical framework for decision making in dynamical opinion networks by leveraging the model-independent tools of equivariant bifurcation and singularity theory~\cite{Golubitsky1985-2,Golubitsky2002book}. We develop our approach for a group of equal agents deciding over a set of {\it a priori} equally valuable options, a case that is not only mathematically tractable but also important for modeling in its own right and as a means to examine messier real-world settings. The most challenging decision-making problems correspond indeed to those in which the options have near-equal value, the agents have no marked biases, hierarchies, or clusters, but where making no decision is costly. These problems are also compelling because of the flexibility they afford: an agent without bias can more quickly be won over to another decision as compared to an agent who starts with a bias.

The case of equal agents and equally valuable options is mathematically tractable because of the symmetries it possesses, namely, agent permutations and option permutations, which admit the use of equivariant bifurcation theory for analysis. From a singularity theory perspective, the highly symmetric case constitutes an {\it organizing center} that determines or {\it organizes} the model behavior, {\it even when the symmetry assumptions are violated}, i.e., when agents are not equal and options are not equally valuable. Equivariant bifurcation theory applied at a decision-making organizing center leads to a list of generic, i.e., likely, opinion-formation patterns emerging through spontaneous symmetry-breaking and identifies the key parameters modulating the deciding group behavior. Further, it is a general principle underlying the use of organizing centers in mathematical modeling that the model-independent results are robust to generic perturbations, which makes them especially practical.

The main model-independent results we prove are the following:
\begin{itemize}
	\item {\bf Consensus or dissensus}. Generically, opinion formation from the unopinionated state will only lead either to a consensus state, where all the agents share the same opinion, or to a dissensus state, where the agents disagree in such a way that on average the group remains (close to) unopinionated.
	The appearance of either consensus or dissensus is simply determined by the balance between agent cooperation (how much an agent follows other agents' opinions) and agent competition  (how much an agent rejects other agents' opinions), even in a fully homogeneous, all-to-all coupling topology. This is in sharp contrast to existing model-dependent results that state the necessity of specific network structures (e.g., structural balance)~\cite{Altafini2013,Pan2018,Cisneros2019} or specific initial conditions~\cite{Fortunato2005,Nedic2012,Parsegov2017,Friedkin1999,DeGroot1974,Hegselmann2002,Ye2019,Lorenz2007}, for stable dissensus to be possible\footnote{Notable exceptions are~\cite{Dandekar2013,Xia2020}, where the proposed models get closer to the richness of the model-independent theory by predicting dissensus opinion formation for all-to-all homogeneous coupling.}.
	\item {\bf Uniform and moderate/extremist dissensus}. Dissensus opinion formation happens in only two generic ways: 1) uniform dissensus or 2) moderate/extremist dissensus. In uniform dissensus, the agent group splits into equally sized clusters, each favoring a different option with the same intensity. In moderate/extremist dissensus, the agent group splits into two clusters of different sizes. Agents in the larger cluster, the moderates, have weak opinions, while agents in the smaller cluster, the extremists, have strong opinions in opposition to the moderates.
	\item {\bf Switch-like or continuous opinion formation}. The way in which a deciding group develops opinions can either be continuous, i.e., the magnitude of the agents' opinion state varies slowly and continuously as a function of time and other ruling parameters, or switch-like, i.e., the magnitude of agents' opinion state varies abruptly and almost discontinuously when certain thresholds in time and/or in the parameter space are reached. A prediction of our theory is that the continuous or switch-like nature of opinion formation is mainly determined by the number of agents and the number of option. Consensus opinion formation is always switch-like when the number of options is larger than or equal to three. In the case of two options, consensus opinion formation can either be continuous or switch-like, depending on the nature of the organizing singularity.  Dissensus opinion formation is always switch-like in the presence of three or more options and three or more agents. When either the number of agents or the number of options is equal to two, dissensus opinion formation can be continuous or switch-like, again depending on the nature of the organizing singularity.
\end{itemize}
A fundamental contribution of the companion paper~\cite{Bizyaeva2020}, which we borrow in the present paper, is the construction of a new general opinion-formation analytical model. This model generalizes a number of published models, in the sense that it recovers them for specific parameter choices and/or when linearized. Here, we use the general analytical model to numerically illustrate the model-independent results listed above. We further explain when and how the model-independent results depend on alternative assumptions that can be made about the model symmetries. Alternative symmetries allow modelling distinct and specific ways in which agents and their opinions can interact, e.g., the presence of structures, like cycles and clusters, in the opinion network and its topology.

From an equivariant bifurcation theory perspective, consensus and dissensus are two different {\it irreducible representations} of the symmetry group of the opinion-formation dynamics.
The possible ways in which agents agree in a consensus state is determined by the {\it axial subgroups} of the consensus irreducible representation. Generically, at a consensus bifurcation from an unopinionated state, the agents select a cluster of $p$ favored options that they favor over the remaining $\No-p$ disfavored options, where $\No$ is the total number of options. The smaller is $p$ the larger is the preference that the agents assign to the favored options. We briefly discuss the possibility of {\it secondary bifurcations} through which the agents could sequentially converge from $p$ options to a single favored option, for example, as a sequence of binary choices as has been studied for spatial decision making in the plane~\cite{Pinkoviezky2018}.
The possible ways in which agents disagree in a dissensus state is determined by the {\it axial subgroups} of the dissensus irreducible representation. Uniform and moderate/extremist dissensus correspond in particular to two different types of axial subgroups. More specifically, we prove that there are only three conjugacy classes of dissensus axial subgroups, two of which correspond to uniform dissensus and one to moderate/extremist dissensus.


\subsubsection*{Paper organization}
The paper is organized as follows. In Section~\ref{sec:model} we introduce a general setting to study opinion networks as smooth dynamical systems and present all the model-independent results in a non-rigorous way, with only a minimum of mathematical jargon. Section~\ref{ssec: applications} discusses the relevance of the presented theoretical framework for many important real-world applications. Section~\ref{SEC: preliminaries} introduces the basic concepts and results from equivariant bifurcation theory, underlying all the model-independent results, and present the first model-independent result, namely, the generic occurrence of consensus or dissensus opinion formation under suitable symmetry assumptions. In Section~\ref{SEC: examples} we revisit known results in equivariant bifurcation theory in the opinion formation setting and describe a list of low-dimensional opinion formation behaviors. Section~\ref{SEC:main results} presents the main theoretical results of the paper and their interpretation in terms of opinion dynamics. In particular, it proves the genericity of uniform and moderate/extremist dissensus and the switchy vs continuous nature of opinion formation depending on the number of agents and the number of options. Some conclusions and future directions are discussed in Section~\ref{sec:conclusions}.

\section{A dynamical systems theory of dynamical opinion networks}
\label{sec:model}

\subsection{Setting up the state space}
\label{ssec:state space}

We consider a network of $\Na\geq 1$ agents that form an opinion about $\No\geq 2$ options. The state of agent $i$ is denoted by the vector $\Xx_i = (x_{i1},\ldots,x_{i\No})\in\R^\No_{\geq 0}$. The positive number $x_{ij}$ represents the {\em opinion} of agent $i$ about option $j$. The larger $x_{ij}$ is, the greater the preference that agent $i$ has for option $j$. The entries of the vector $\Xx_i$ satisfy the $\No-1$ dimensional simplex conditions $x_{ij} \ge 0$ and
\begin{equation}  \label{e:total_vote}
x_{i1} +  \cdots + x_{i\No} = 1.
\end{equation}
Constraint \eqref{e:total_vote} is based on the assumption that each agent has the same voting capacity to distribute among the various options and this capacity is normalized to one.  Hence, each agent's state space is the $(\No-1)$ dimensional simplex $\Delta$, and the state space of the dynamical opinion network is 
\begin{equation}\label{EQ: model state space}
\VV = \underbrace{\Delta\times\cdots\times \Delta}_{\Na\text{-times}},
\end{equation}
which has dimension $\Na(\No-1)$. A point in the state space $\VV$ is denoted by the vector $\Xx=(\Xx_1,\ldots,\Xx_\Na)$. The {\em neutral point} $\Oo\in\VV$ is the unique state where each agent assigns the same proportional vote to each option; that is, $\Oo=(\NP_1,\ldots,\NP_\Na)$ where
\begin{equation}\label{e:NP}
\NP_i = \left(\frac{1}{\No},\ldots,\frac{1}{\No}\right)\in \Delta.
\end{equation}
At the neutral point, all options are equally preferred by all agents.

Mathematical analysis will be performed in the translated variables defined by state variations around the neutral point
\begin{equation}\label{EQ:linearized vars}
\Zz=\Xx-\Oo.
\end{equation}
Let
\begin{equation}\label{EQ: zero sum subspace}
V_n=\{y\in\R^n:\, y_1+\cdots+y_n=0\}\cong\R^{n-1}.
\end{equation}
Then $\Zz_i\in V_{\No}$ for all $i=1,\ldots,\Na$. Observe, moreover, that $T_{\Oo_i}\Delta=V_\No$, that is, working with the shifted variable $\Zz$ is equivalent to working in $V=T_{\Oo}\VV$, i.e., the tangent space to $\VV$ at $\Oo$. The linearized state space $V$ decomposes as 
\begin{equation}\label{EQ: tensor product structure}
V=\underbrace{V_\No\times\cdots\times V_\No}_{\Na\text{ times}}\cong(\R^{\No-1)^\Na}\cong\R^{\No-1}\otimes\R^\Na\,.
\end{equation}

\begin{remark}
Developing the analysis in the linearized variables is justified by the fact that only an agent's relative opinions about the various options matter because its total voting capacity is fixed. For interpretation and visualization purposes, variables will be mapped back to the simplex.
\end{remark}

In our model, agent $i$ is {\em unopinionated} when its opinion state lies in a $\vartheta$-neighborhood of the neutral point, i.e. $\|\Xx_i-\Oo_i\|\leq \vartheta$, where $\vartheta\geq 0$ is a given threshold. Agent $i$ is {\em opinionated} otherwise, i.e., when $\|\Xx_i-\Oo_i\|> \vartheta$. The continuous nature of the state space allows to quantify the strength of an opinion. In particular, agent $i$ is {\em weakly opinionated} if $\|\Xx_i-\Oo_i{\|\approx} \vartheta$ and {\em strongly opinionated} if $\|\Xx_i-\Oo_i\|\gg \vartheta$. Agent $i$ {\em favors} an option $j$ if it is opinionated and $x_{ij}\geq x_{il}-\vartheta$ for all $l\neq j$. Agent $i$ {\em disfavors} an option $j$ if it is opinionated and $x_{ij}< x_{il}-\vartheta$ for some $l\neq j$. An agent is {\em conflicted} among a set of options if it has near-equal and favorable opinions about all of them relative to the options not in the set. Note that with these definitions the state space of each agent is partitioned into qualitatively different, non-overlapping, opinion states. \Cref{FIG:un-decided}a illustrates the definitions for $\No=2$ and $\Na=3$.

\begin{figure}[h!]
	\centering
	\includegraphics[width=0.85\textwidth]{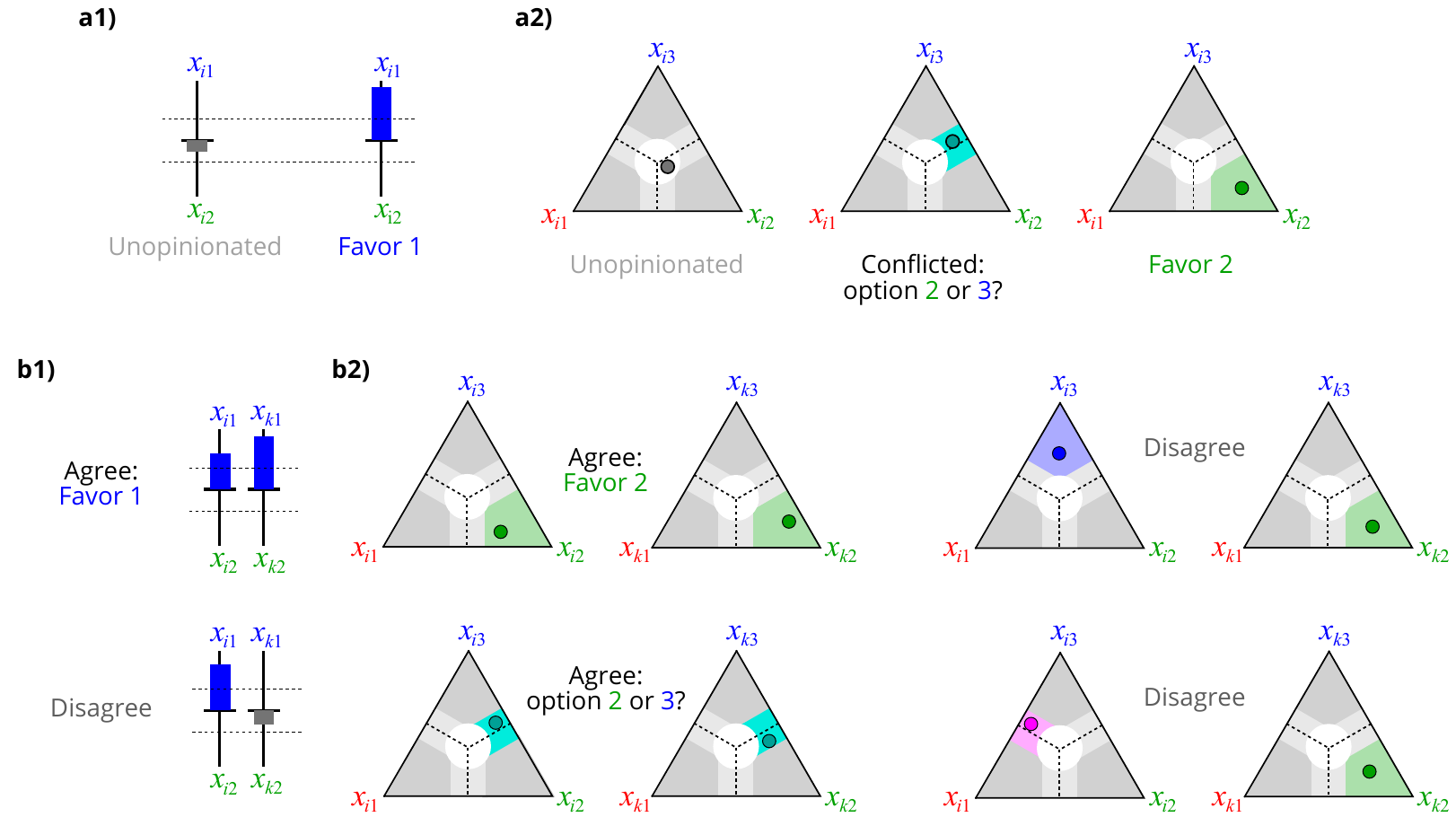}
	\caption{ {\bf a)} State space and opinion states of one agent $i$ for $\No=2$ ({\bf a1}) and $\No=3$ ({\bf a2)}). In {\bf a1)}, the 1-simplex is the segment $\Delta\subset\R^2$ with vertices $(1,0),\,(0,1)$. The mid-point of the segment is the neutral point $\NP_i$. If the agent's state lies in a neighborhood of $\NP_i$, the agent is unopinionated (region between the dashed lines). In {\bf a2)}, the 2-simplex is the triangle $\Delta\subset\R^3$ with vertices $(1,0,0),\,(0,1,0),\,(0,0,1)$. The center of the triangle is the neutral point $\NP_i$. If the agent's state lies in a neighborhood of $\NP_i$, the agent is unopinionated. The 
	dashed lines in the triangle indicate points at which two options are exactly equally favored. If the agent's state belongs to a tubular neighborhood of these lines, the agent is conflicted between the corresponding two options (light gray or color-coded depending on the conflicting options). In the remaining three regions (gray or color-coded depending on the option), the agent favors one of the three options. {\bf b} Agreement and disagreement states of two agents $i$ and $k$ for $\No=2$ ({\bf b1}) and $\No=3$ ({\bf b2}).}\label{FIG:un-decided}
\end{figure}

 Based on their opinions, multiple agents can either {\em agree} or {\em disagree}. We illustrate this concept in \Cref{FIG:un-decided}{b1}, for the case $\No=2$, and \Cref{FIG:un-decided}{b2}, for the case $\No=3$. Two agents agree when they share the same qualitative opinion state and disagree when they exhibit different qualitative opinion states. Observe that agents can agree on favoring one of the options or they can agree by remaining conflicted between a set of options. The group reaches an {\em agreement} state when all agents are opinionated and agree. If the group is in agreement and, moreover, $\|\Zz_i-\Zz_j\|\leq\vartheta$ for all $i,j\in\{1,\ldots,\Na\}$, then the group has reached a {\it consensus} state. If at least one pair of agents disagree, then the group is in a {\em disagreement} state. If the group is in disagreement and the average agent is unopinionated, i.e., $\left\|\frac{1}{\Na}\sum_{i=1}^{\Na}\Zz_i\right\|\leq\vartheta$, then the group is in a {\em dissensus} state. Finally, the group is {\it unopinionated} if all agents are unopinionated.
 
 \begin{remark}
Although motivated by collective decision-making problems, we avoid providing a formal definition of ``decision", ``deadlock" and other related terms. Those concepts are indeed context-dependent. For instance, in a ``democratic" decision-making context, one could define ``decision" as the state where at least a majority of agents agree and ``deadlock" otherwise. In a task allocation problem, however, any opinion state could correspond to a valid decision. For instance, if all agents are unopinionated, then the group has decided not to develop any task, whereas if certain agents favor certain options they have decided to address the corresponding tasks while the rest of the group does nothing. We will thus restrict the use of the two concepts of decision and deadlock only to specific examples.
 \end{remark}

\subsection{Collective opinion dynamics and symmetry}
\label{ssec: collective decision and symmetry}

Given the definitions introduced in Section~\ref{ssec:state space}, we want to answer the following questions. How can a group of equal agents become opinionated about a set of {\it a priori} equally valuable options? Are there prototypical opinion-forming behaviors that are generically observed in such a dynamical opinion network? How does consensus or dissensus arise? How do the opinion dynamics depend on system parameters and inputs, such as the number of agents and options, the interconnection topology, and the presence of external signals, such as agents' biases toward certain options? To answer these questions, we use equivariant bifurcation theory~\cite{Golubitsky1985-2,Golubitsky2002book} and model the time evolution of each agent's opinion as a smooth dynamical system
\begin{equation}\label{e:symm bifu dyna}
\dot{\Xx} = G({\Xx},\lambda),
\end{equation}
where $\lambda\in\R$ is the {\it bifurcation parameter}.
\begin{remark}\label{rmk:discrete time}
	In some applications it might be convenient to model the evolution of opinions in time as a discrete process~\cite{Hegselmann2002,Krause2002,Friedkin1999,DeGroot1974}, i.e., according to a smooth map
	$\Xx_{k+1}=G_d(\Xx_k,\lambda)$.
	Our results generalize to this setting by applying the equivariant bifurcation theory to the discrete-time steady-state equation $G_d(\Xx,\lambda)-\Xx=0$ instead of the continuous-time steady-state equation $ G({\Xx},\lambda)=0$. One must however be careful in applying equivariant bifurcation theory to discrete-time dynamics as intrinsically discrete dynamical phenomena, like the appearance of discrete periodic orbits, are not trivially captured by the theory. We will not address discrete-time dynamics in the remainder of the paper.
\end{remark}

The bifurcation parameter $\lambda$ can model a variety of features and parameters depending on the context, for instance, the strength of information exchange between agents (e.g., the attention paid to each other), the evolution of time toward a decision deadline (like an upcoming political election), the changing relative spatial position of a group of moving agents and an object in the environment, other influences exerted by environmental signals (e.g., the scarcity of food), or a combination of them. When these factors are sufficiently strong, we expect the neutral point and close-by unopinionated states to become unstable. Having no clear opinion becomes unsustainable or too costly and at least some of the agents transition to an opinionated state. The system {\em bifurcates} from an unopinionated state to a qualitatively new opinion state. State-dependent (feedback) dynamics for $\lambda$ are derived and analyzed in~\cite{Bizyaeva2020}.

Equivariant bifurcation theory is a {\it model-independent} theory, in the sense that precise predictions about a modeled dynamical behavior can be formulated solely based on empirical assumptions, without declaring any specific model. The first assumption we have already implicitly made is that collective opinion dynamics can be modeled as a smooth dynamical system in which opinions are valued in the simplex. The other assumptions we make concern the model {\it symmetries}. Mathematically, two agents are equal if interchanging them does not change the behavior of the opinion dynamics. In other words, two agents are equal if the opinion dynamics are symmetric with respect to interchanging the two agents. Similarly, two options are {\it a priori} equally valuable if the opinion dynamics are symmetric with respect to interchanging the two options. The most symmetric case is when all agents are equal (for instance, there are no hierarchies as in a ``democratic" decision process) and all options are {\it a priori} equally valuable. In this case, model~\eqref{e:symm bifu dyna} has symmetry group
\begin{equation}\label{e:symmetry group}
\Gamma= \ES_{\Na}\times \ES_{\No},
\end{equation}
where the permutation group $\ES_{\Na}$ interchanges the agents and $\ES_{\No}$ interchanges the options. The action of $\Gamma$ on the opinion dynamics is rigorously defined in Section~\ref{SSEC: conse disse bifu}.

The opinion dynamics~\eqref{e:symm bifu dyna} have symmetry $\Gamma=\ES_\Na\times \ES_\No$ in the sense that they are {\it $\Gamma$-equivariant}:
\begin{equation}\label{EQ:decision gamma equivariance}
\gamma G(\Xx,\lambda)=G(\gamma\Xx,\lambda),\quad\forall \gamma\in\Gamma,\ \forall\lambda\in\R.
\end{equation}
$\Gamma$-equivariance is equivalent to requiring that the symmetry group $\Gamma$ sends solutions of~\eqref{e:symm bifu dyna} into solutions of~\eqref{e:symm bifu dyna}. The fully symmetric situation is interesting because it provides a mathematically tractable organizing center to uncover generic properties of opinion-forming behaviors. As we shall verify with numerical simulations, the predictions derived at an organizing center remain valid when the symmetry assumption is violated, for instance, by introducing random heterogeneities across the agents, the options, and their interactions.

\begin{remark}\label{RMK: coopetitive 1}
Qualitative differences between the agents and between the options, for instance, due to existence of clusters with sharply distinct prior biases or the presence of structured {\it coopetitive} (i.e., both positive and negative) interactions~\cite{Altafini2013}, can also be modeled in the equivariant setting by requiring that the symmetry group of the opinion dynamics is a subgroup of $\ES_\Na\times\ES_\No$. The same techniques and results from equivariant bifurcation theory used here for the fully symmetric case also apply to the case in which part of the symmetry is lost due to biases or structured intra- and inter-agent interactions.
\end{remark}


Observe that because at $\Oo$ all agents have the same state and all options are equally favored, $\gamma\Oo=\Oo$ for all $\gamma\in\Gamma$, that is, the neutral point is {\it fixed} by the symmetry group of the model. Observe also that no other point in the state space is fixed by {\it all} the group's elements. In other words, the neutral point is the most symmetric point in the state space, where all agents share exactly the same opinion and no options are favored or disfavored over the others. We will show that a dynamical opinion network can leave this highly symmetric, completely unopinionated, state by means of {\it spontaneous symmetry breaking}~\cite{Golubitsky2002book}.

\subsection{A vector field realization}
\label{ssec: state space real}
For numerical illustration and exploration purposes, we will rely on the analytical model introduced in the companion paper~\cite{Bizyaeva2020}. For simplicity, this realization was set up on the tangent space $V$. The resulting vector field and dynamical behaviors can always be mapped to the product of simplexes $\VV$ by an affine change of coordinates, provided suitable boundedness conditions on the dynamics on $V$. The proposed opinion formation dynamics is\footnote{In the proposed model, either or both of the sums over the agents could be swapped with the sigmoidal functions. This modification does not change neither the interpretation of the model nor the validity of model-independent predictions. However, this and other types of modifications could change model-independent properties of the dynamics, like the stability of some bifurcation branches.}
\begin{subequations}\label{EQ:generic decision dynamics}
	\begin{align}
	\dot z_{ij}&=F_{ij}(\Zz)-\frac{1}{\No}\sum_{l=1}^{\No}F_{il}(\Zz)\\
	F_{ij}(\Zz)&=-z_{ij}+
	\lambda\left(S_1\left(\sum_{k=1}^\Na \Am{i}{k}{j}{j} z_{kj}\right)+\sum_{\substack{l\neq j\\l=1}}^\No S_2\left( \sum_{k=1}^\Na \Am{i}{k}{j}{l} z_{kl}\right)\right)+b_{ij}
	\end{align}
\end{subequations}
where $S_{1,2}:\R\to\R$ satisfies $S_{1,2}(0) = 0$ and $S_{1,2}'(0) = 1$ and is otherwise a generic\footnote{In the sense that all the coefficients of its Taylor expansion at the origin are non-zero, or at least up to a given degree.} bounded sigmoidal nonlinear function. It follows by the results in \cite[Section~III.B]{Bizyaeva2020} that~\eqref{EQ:generic decision dynamics} are a well-defined, bounded dynamics on $V$. Boundedness also implies that the resulting vector field on $V$ can be mapped to the full state space $\VV$ using a simple affine change of coordinates \cite[Corollary III.5.1]{Bizyaeva2020}.

The {\it adjacency tensor} $\Am{}{}{}{}$ in~\eqref{EQ:generic decision dynamics} defines that structure of the opinion-influence network. If $\Am{i}{k}{j}{l}$ is not zero, then the opinion of agent $i$ about option $j$ is influenced by the opinion of agent $k$ about option $l$. The sign and magnitude of $\Am{i}{k}{j}{l}$ defines the sign and strength of these opinion interactions. The numbers $b_{ij}$ define the prior bias of agent $i$ for option $j$. Model~\eqref{EQ:generic decision dynamics} recover as special cases a number of nonlinear and linear opinion formation models. See~\cite{Bizyaeva2020} for details.

Observe that the adjacency tensor $\Am{}{}{}{}$ specifies distinct intra- and inter-agent option-influence coupling topologies. 1) The intra-agent opinion coupling topology of agent $i$ is specified by the adjacency matrix $\Am{i}{i}{j}{l}$. It describes the influence network between the opinions of agent $i$. 2) The inter-agent opinion-influence coupling topology related to options $j$ and $l$ is specified by the adjacency matrix $\Am{i}{k}{j}{l}$. It describes the inter-agent network of influence of the agents' opinions about option $l$ over the agents' opinions about option $j$.

Prior biases and the opinion interaction topology determine the symmetry of model~\eqref{EQ:generic decision dynamics}.
\begin{theorem}\label{THM:realization symmetries}
Model~(\ref{EQ:generic decision dynamics}) is $\Gamma$-equivariant if and only if $b_{ij}=b\in\R$, $\Am{i}{i}{j}{j}=\alpha\in\R$, $\Am{i}{i}{j}{l}=\beta\in\R$, $\Am{i}{k}{j}{j}=\gamma\in\R$,  and  $\Am{i}{k}{j}{l}=\delta\in\R$ for all $i,k=1,\ldots,\Na$, $k\neq i$, and all $j,l=1,\ldots,\No$, $l\neq j$.
\end{theorem}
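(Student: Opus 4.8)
The plan is to exploit the fact that both factors of $\Gamma=\ES_\Na\times\ES_\No$ are generated by transpositions, so it suffices to verify (or extract) equivariance only for a single transposition $\sigma=(p\,q)$ of two agents and a single transposition $\tau=(r\,s)$ of two options; equivariance under all of $\Gamma$ then follows. First I would record two structural observations that organize the whole argument. (1) The right-hand side of~\eqref{EQ:generic decision dynamics} is $G=P\circ F$, where $F=(F_{ij})$ and $P$ is the option-averaging projection $(PW)_{ij}=W_{ij}-\frac{1}{\No}\sum_l W_{il}$; since $P$ commutes with every agent and option permutation, $G$ is $\Gamma$-equivariant if and only if $\gamma F(\Zz)-F(\gamma\Zz)\in\ker P$ for all $\gamma\in\Gamma$ and all $\Zz$, where $\ker P$ consists of the vectors whose $(i,j)$ entry is independent of $j$. (2) Because equivariance must hold for every $\lambda\in\R$, the three pieces of $F_{ij}$ --- the linear term $-z_{ij}$, the constant bias $b_{ij}$, and the $\lambda$-weighted sum of sigmoids --- can be treated separately. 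The term $-z_{ij}$ is a scalar multiple of the identity and is automatically equivariant, so the content of the theorem lives entirely in the bias term and the sigmoidal term.

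For sufficiency I would show that, under the four-coefficient hypothesis, $F$ itself is $\Gamma$-equivariant. The key object is the argument of each sigmoid, $A^{jl}_i(\Zz)=\sum_k\Am{i}{k}{j}{l}z_{kl}$, a linear form in the option-$l$ variables. The stated hypotheses say precisely that $\Am{i}{k}{j}{l}$ depends on the agent indices only through the truth value of $i=k$ and on the option indices only through the truth value of $j=l$. Since any permutation preserves these relations, one gets $\Am{\sigma i}{\sigma k}{\tau j}{\tau l}=\Am{i}{k}{j}{l}$ for all $\sigma,\tau$; consequently each $A^{jl}_i$ is carried to another argument of the same type, with diagonal arguments ($l=j$, governed by $S_1$) mapping to diagonal arguments and off-diagonal arguments ($l\neq j$, governed by $S_2$) mapping to off-diagonal ones --- this last point being exactly why the $S_1/S_2$ split must respect the relation $j=l$. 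A direct substitution then gives $\gamma F(\Zz)=F(\gamma\Zz)$, and applying the equivariant $P$ yields equivariance of $G$. The constant $b$ contributes a $j$-independent vector, which lies in $\ker P$ and causes no trouble.

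For necessity I would expand $G$ in its Taylor series at $\Zz=0$ and use that each homogeneous degree must be equivariant. At degree zero, $G(0)$ must be fixed by $\Gamma$; since the option factor carries the standard representation of $\ES_\No$, which has no nonzero invariants, $G(0)=0$, and reading off the constant part forces $b_{ij}$ to be independent of $j$ --- and a $j$-independent bias projects to zero under $P$, so it is dynamically trivial and may be normalized to the constant $b$. At degree $\geq 2$ the only contributions come from the sigmoids, and here I would invoke genericity of $S_1,S_2$: because all their Taylor coefficients are nonzero (and generically $S_1''(0)\neq S_2''(0)$), the representation of $F_{ij}$ as an affine function plus a sum of univariate sigmoids applied to the distinct linear forms $A^{jl}_i$ is rigid, i.e.\ two such expressions coincide only when the affine parts agree and the labelled collections of (profile, linear form) pairs agree. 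Matching $\gamma F$ against $F\gamma$ term by term --- with $S_1$-terms forced to match $S_1$-terms and $S_2$-terms to match $S_2$-terms --- then recovers $\Am{\sigma i}{\sigma k}{\tau j}{\tau l}=\Am{i}{k}{j}{l}$ for every transposition, hence for all of $\Gamma$. Constancy of the tensor on the four orbits of $\Gamma$ acting on the index pairs $((i,k),(j,l))$, namely $\{i=k,\,j=l\}$, $\{i=k,\,j\neq l\}$, $\{i\neq k,\,j=l\}$, $\{i\neq k,\,j\neq l\}$, is exactly the assertion that $\Am{i}{k}{j}{l}$ takes the four values $\alpha,\beta,\gamma,\delta$.

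The main obstacle I anticipate is the interaction of the projection $P$ with the degree-$\geq 2$ analysis: a priori equivariance only requires $\gamma F-F\gamma\in\ker P$, not that it vanishes, so I must rule out nonzero $j$-independent discrepancies in the nonlinear part. This is where genericity does the real work: since the degree-$m$ part of $F_{ij}$ genuinely distinguishes option $j$ (through the $S_1$-weighted power of $A^{jj}_i$ against the $S_2$-weighted powers of the $A^{jl}_i$, $l\neq j$), a nonzero element of $\gamma F-F\gamma$ cannot be independent of $j$, so the nonlinear parts must match exactly and the rigidity argument applies. Verifying this separation carefully, together with checking that the linear (slope-one) part contributes no constraints beyond those already implied by the higher-order terms, are the only genuinely technical points; the remainder is bookkeeping over the transposition generators.
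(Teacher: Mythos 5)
Your proposal is correct in substance, and while your sufficiency argument matches the paper's (direct verification, which your factorization $G=P\circ F$ with $P$ the option-averaging projection organizes cleanly), your necessity argument takes a genuinely different route. The paper proves necessity by invoking the general structural form~\eqref{EQ:equivariant dynamics form} of a $\Gamma$-equivariant vector field (with the drift $\tilde G$ invariant under the indicated index permutations, citing the coupled-cell literature), identifying the model's $F_{ij}$ with $\tilde G$, and then matching only the first derivatives~\eqref{EQ: balance terms} at the origin; the sole property of the sigmoids it uses is $S_{1,2}'(0)=1$. You instead expand to all orders and exploit genericity of $S_1,S_2$ in a rigidity/term-matching argument. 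The trade-off is instructive: the paper's argument is shorter, but it passes silently over exactly the point you flag as your main obstacle --- equivariance of $G=P\circ F$ constrains $F$ only modulo $\ker P$, and the identification of the model's $F$ with the invariant drift $\tilde G$ is where that slack is elided. The slack is real at first order: the Jacobian of $G$ at the origin sees $\Am{i}{k}{j}{l}$ only after subtracting averages over the first option index (and, since $\sum_l z_{kl}=0$ on $V$, also modulo $l$-independent shifts), and for $\No=2$ the option-swap constraint is vacuous at linear order because the swap acts as $-I$ and every linear map commutes with it. So a literal proof of the ``only if'' genuinely needs the higher-order, genericity-based input you describe; in that sense your plan is more complete in ambition than the paper's, with the caveat that the rigidity lemma you assert (two signed sums of generic sigmoids of distinct linear forms agree modulo $j$-independent functions only if they match term by term) is the substantive step still to be written out. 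Your treatment of the bias is also a genuine refinement: equivariance forces only $G(0)\in\Fix(\Gamma)=\{0\}$, i.e.\ $b_{ij}$ independent of $j$ but possibly depending on $i$; such a bias lies in $\ker P$ and is dynamically invisible, so the literal conclusion $b_{ij}=b$ holds only after the normalization you mention, which the paper asserts without comment.
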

\begin{proof}
 Sufficiency simply follows by verifying \eqref{EQ:decision gamma equivariance}, which leads to lengthy but straightforward computations that we omit. Necessity will be proved in Section~\ref{SSEC: balance determines}.
\end{proof}

\begin{remark}
	Under the assumption of $\Gamma$-equivariance, the drift term $F_{ij}(\Zz)$ in~\eqref{EQ:generic decision dynamics} reduces to
		\begin{align}\label{EQ:generic decision dynamics all}
		F_{ij}(\Zz)&=-z_{ij}+
		\lambda\left(S_1\left(\alpha z_{ij}+\gamma\sum_{\substack{k=1\\k\neq i}}^\Na z_{kj}\right)+\sum_{\substack{l=1\\l\neq j}}^\No S_2\left(\beta z_{il}+\delta \sum_{\substack{k=1\\k\neq i}}^\Na z_{kl}\right)\right)+b_{ij}
		\end{align}
\end{remark}

\Cref{THM:realization symmetries} translates symmetry properties into readily verifiable and experimentally controllable properties, like network topology and agents' biases. In particular, $\Gamma$ equivariance is equivalent to asking that the coupling topology is {\it all-to-all homogeneous} in the sense that all the option-influence coupling topologies specified by the adjacency tensor $\Am{}{}{}{}$ are all-to-all and homogeneous. In practice, these assumptions do not need to be verified exactly for the predictions of our theory to remain valid.

\begin{remark}\label{RMK: coopetitive 2}
	In general, the symmetry group of dynamics~(\ref{EQ:generic decision dynamics}) is determined by the automorphism group of the multi-graph associated to the tensor adjacency matrix $A$. For instance, if the inter-agent topology is of dihedral type, e.g., an undirected ring, then $\Gamma=\D_{\Na}\times\Gamma_o$, with $\Gamma_o\subset\ES_{\No}$. See \cite[Proposition IV.2]{Bizyaeva2020}.
\end{remark}

\begin{remark}
	In the lowest dimensional case of two agents deciding about two options, model~\eqref{EQ:generic decision dynamics} is a universal unfolding of $\ES_2\times\ES_2$-equivariant singularities. This observation originally motivated the functional form of model~\eqref{EQ:generic decision dynamics}.
\end{remark}

In the remainder of the paper, we use model~(\ref{EQ:generic decision dynamics all}), with $S_1(x)=\tanh(x+k_{\rm h.t.o.}\tanh(x^2))$ and $S_2(x)=0.5\tanh(2x+2k_{\rm h.t.o.}\tanh(x^2))$, $k_{\rm h.t.o.}\neq 0$, to numerically illustrate our theoretical predictions. The non-degeneracy condition $k_{\rm h.t.o.}\neq 0$ ensures that all the derivatives of $S_1$ and $S_2$ at the origin are non-zero. To highlight the robustness of our predictions and their practical validity, all the simulations are performed in perturbed symmetric conditions. That is, we do not use an exact homogeneous all-to-all topologies and we allow the virtual agents to have small heterogeneous biases.

\subsection{Consensus and dissensus arise from symmetry}
\label{ssec: conse pola symmetry}

Given the symmetry assumptions, namely that the opinion-forming dynamics are $\Gamma$-equivariant, equivariant bifurcation theory predicts that opinion forming follows two generic types of paths, or {\it bifurcation branches}, as sketched in Figure~\ref{FIG:consensus_polar}. One type is tangent at $\Oo$ to the {\em consensus space}
\begin{equation}\label{e:agreement space}
W_c=\Bigg\{\bigg(\underbrace{v,\ldots,v}_{\Na\text{-times}}\bigg),\, v\in V_\No\Bigg\}
\end{equation}
and the other to the {\em dissensus space}
\begin{equation}\label{e:disagreement space}
W_d=\{({\Zz}_1,\ldots,{\Zz}_\Na):\, {\Zz}_1+\cdots+{\Zz}_\Na=0,\ \Zz_i\in V_\No\}.
\end{equation}
Observe that $W_c\cong\R^{\No-1}$, $W_d=V_{\No}\otimes V_{\Na}\cong\R^{\No-1}\otimes\R^{\Na-1}$, and $V=W_c\oplus W_d$.

Points on the consensus space correspond the situations in which all the agents have exactly the same opinion. Points on dissensus space correspond to the situation in which the average ag ent is neutral. Observe that, according to our definition, a consensus state is any state in a $\vartheta$-neighborhood of the consensus space outside a $\vartheta$-neighborhood of the origin and, similarly, a dissensus state is any state in a $\vartheta$-neighborhood of the dissensus space outside a $\vartheta$-neighborhood of the origin. The set of consensus states reduces exactly to the consensus space for $\vartheta=0$ and the set of dissensus states reduces exactly to the dissensus space for $\vartheta=0$. Mathematical analysis will be developed inside the consensus and dissensus spaces, that is, for the idealized fully symmetric case with $\vartheta=0$. The predictions of the theory will however be practically valid (as numerically illustrated) under weakly violated symmetry and for $\vartheta>0$.

\begin{remark}
	Some consensus states correspond to conflicted opinions, e.g., as in case c2 in Figure~\ref{FIG:consensus_polar}a, where the agents agree to exclude Option~3 but remain conflicted between Options~1 and~2. In these cases, the group can still transition to consensus on one of the options through {\it secondary symmetry-breaking bifurcations at which the conflicted equilibrium becomes unstable}. The analysis of secondary equivariant bifurcations is likely to be intractable in general, as it requires high-dimensional singularity theory. It has however been worked out in a handful cases~\cite[Chapter~X]{Golubitsky1985},\cite[Chapter~XV]{Golubitsky1985-2}. We discuss one of these examples for consensus opinion formation over three options in Section~\ref{SSEC: two three}.
\end{remark}

\begin{figure}
	\centering
	\includegraphics[width=\textwidth]{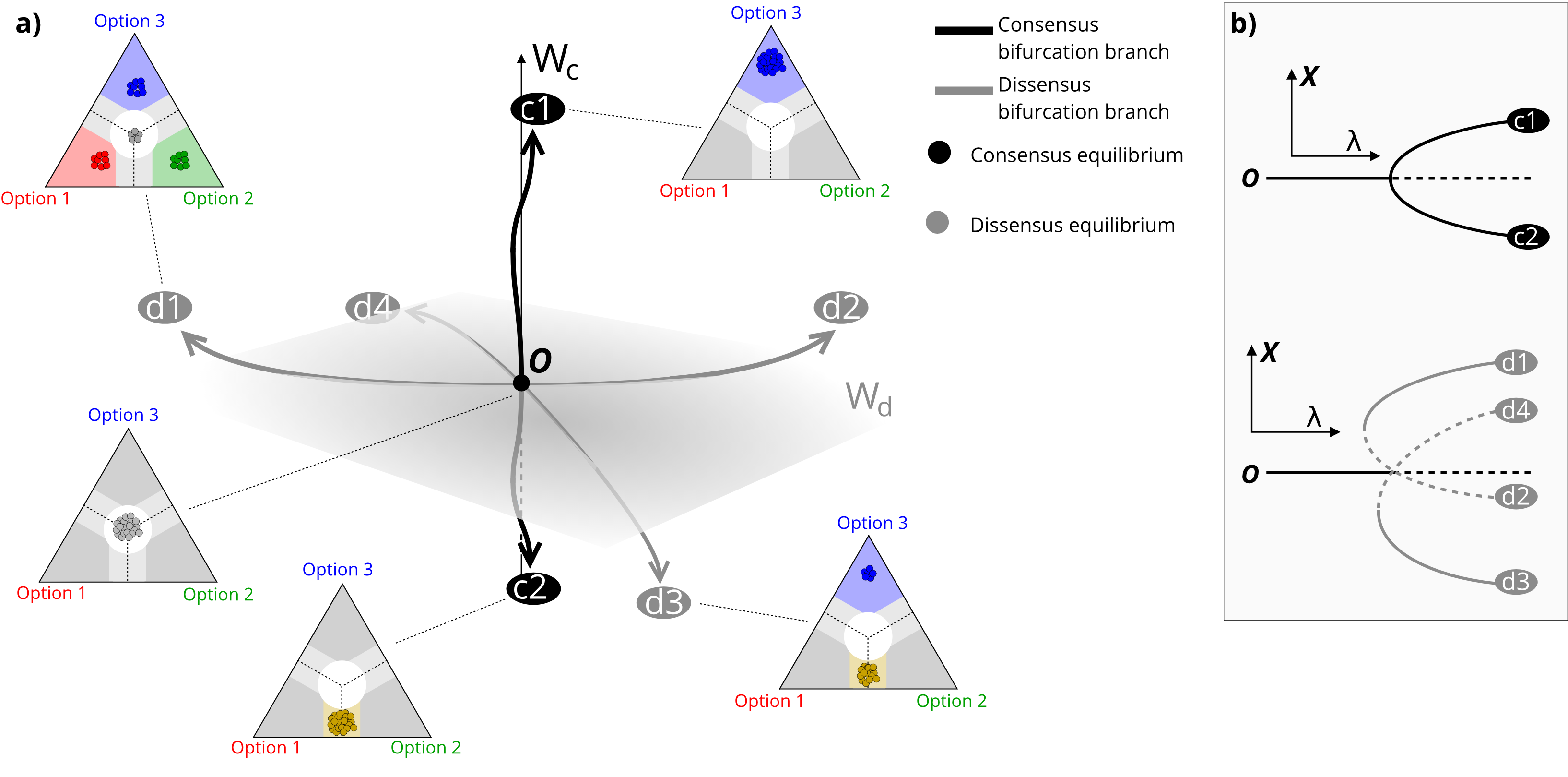}
	\caption{{The geometry of collective opinion formation. \bf a)}The consensus space $W_c$ and dissensus space $W_d$ define complementary orthogonal subspaces at the neutral point $\Oo$. Equilibria of the opinion dynamics~\eqref{e:symm bifu dyna} vary according to the bifurcation parameter $\lambda$. The neutral point $\Oo$ is always an equilibrium of~\eqref{e:symm bifu dyna}, but at a bifurcation (see {\bf b}), new equilibria generically appear along the consensus space or along the dissensus space. Along the consensus space, the group is in a consensus state: all agents have nearly the same opinion (c1,c2). In c1, the group reaches consensus on one of the options, whereas in c2 it remains conflicted between the other two options. Along the dissensus space, the group is in a dissensus state: agents remain unopinionated on average, in the sense that the average opinion is close to neutral. The are two generic types of dissensus states: uniform (d1) and moderate/extremist (d3). {\bf b)} Bifurcation diagram representation of consensus (top) and dissensus (bottom) opinion forming. At an opinion-forming bifurcation, the neutral state  $\Oo$ loses stability and new equilibria either appear along the consensus branch or along the dissensus branch. Consensus branches are tangent to the consensus space $W_c$ (see {\bf a}), whereas dissensus branches are tangent to the dissensus space $W_d$. Solid branches signify stable opinion states. Dashed branches signify unstable opinion states. The bifurcation diagrams are sketched for illustration purposes.}\label{FIG:consensus_polar}
\end{figure}

Consensus and dissensus spaces appear as important objects in the opinion dynamics~\eqref{e:symm bifu dyna} because they are the two {\it irreducible representations} \cite[Section~XII.2]{Golubitsky1985-2},\cite[Page 14]{Golubitsky2002book} of the symmetry group $\Gamma$. Certain ``genericity results" (see \cite[Theorem~1.27]{Golubitsky2002book} and \cite[p.82]{Golubitsky1985-2}) ensure that, generically, bifurcations from the neutral, unopinionated, equilibrium $\Oo$ happen either along the consensus space {\it or} along the dissensus space through spontaneous symmetry breaking. We make these statements rigorous in Theorem~\ref{THM: consensus or dissensus}, Section~\ref{SEC: preliminaries}. We can interpret these results from a modeling perspective as follows. Consensus and dissensus are the only two likely and mutually-exclusive opinion-forming outcomes of any initially unopinionated dynamical opinion network that approximately satisfies our symmetry assumptions. These assumptions can be weakly violated in practice while letting our predictions remain practically valid, a statement that we will verify with numerical simulations in the next sections.

\begin{remark}\label{RMK: consensus or dissensus transi}
	The splitting of opinion formation into consensus and dissensus bifurcations generalizes to the case in which $\Gamma=\Gamma_a\times\Gamma_o$, with $\Gamma_a\subset \ES_{\Na}$ and $\Gamma_o\subset \ES_{\No}$, such that both $\Gamma_a$ acts transitively on $\{1,\ldots,\Na\}$. See Remark~\ref{RMK: conse disse general} for details. As an example consider $\Gamma_a=\D_{\Na}$, the dihedral group of order $\Na$, and $\Gamma_o=\Z_{\No}$, the cyclic group of order $\No$. As observed in Remark~\ref{RMK: coopetitive 2}, the modeling relevance of considering different symmetry groups relies in the possibility of capturing different topologies of the opinion network defined by~\eqref{e:symm bifu dyna}.
\end{remark}

Observe that the dimensions of the consensus and dissensus spaces are different in general, as are the ways in which the symmetry group $\Gamma$ acts on the two spaces. The consensus space is generally lower dimensional and, because all agents have the same state, the permutation group $\ES_\Na$ acts trivially on this space. Opinion forming along the consensus space is thus governed by $\ES_\No$-equivariant bifurcations acting on $\R^{\No-1}$, a type of symmetry breaking that has already been studied \cite{Elmhirst2004},\cite[Sections~1.5,~2.6,~2.7]{Golubitsky2002book}. The dissensus space is generally higher dimensional and the full symmetry group acts nontrivially on this space. Opinion forming along this space is governed by $\ES_\Na\times\ES_\No$-equivariant bifurcations acting on $\R^{\Na-1}\otimes\R^{\No-1}$. These types of bifurcations have been studied only for $\Na=2$ or $\No=2$ cases~\cite{Aronson1991},\cite[Section~XIII.5]{Golubitsky1985-2},\cite[Chapter X]{Golubitsky1985-2}. We rigorously extend these results in Section~\ref{SEC:main results}.

%
%
%

\subsection{The balance between agent cooperation and agent competition selects between consensus and dissensus}
\label{SSEC: balance determines}

A question that naturally arises is what determines the occurrence of consensus bifurcations, leading to group consensus, versus dissensus bifurcations, leading to group dissensus?

Let $G$ be a $\Gamma$-equivariant vector field on $\VV$, as in~\eqref{e:symm bifu dyna}. Then the components of $G$ can be written as follows (see \cite{Stewart2003,Golubitsky2005} for details),
\begin{subequations}\label{EQ:equivariant dynamics form}
\begin{align}
G_{ij}&=\tilde G\left( x_{ij} ,\, \overline{ \{x_{i,l\neq j}\} } ,\, \overline{ \{x_{k\neq i,j} \}}  ,\, \overline{ \{x_{k\neq i,l\neq j}\} } \,,\lambda \right)-\langle\tilde G\rangle^i(\Xx,\lambda)\\
\langle \tilde G\rangle^i(\Xx,\lambda)&=\frac{1}{\No}\sum_{\bar l=1}^\No \tilde G\left( x^i_{\bar l} ,\, \overline{ \{x_{i,l\neq \bar l}\} } ,\, \overline{ \{x_{k\neq i,\bar l} \}}  ,\, \overline{ \{x_{k\neq i,l\neq \bar l}\} } \,,\lambda \right)
\end{align}
\end{subequations}
where the notation $\overline{ \{\,\cdot\,\}}$ signifies that $\tilde G$ is invariant with respect to permutations of the elements appearing in the arguments. Given the invariance properties, subtracting the average terms $\langle\tilde G\rangle^i(\Xx)$ is necessary to impose that $G_{i1}+\cdots + G_{i\No}=0$. This, in turn, ensures that each agent's voting capacity remains constant as required by the simplex condition~\eqref{e:total_vote}. The invariance properties of $\tilde G$ imply that the smooth functions
\begin{equation}\label{EQ: balance terms}
\bar\alpha(\lambda)=\left.\frac{\partial \tilde G_{ij}}{\partial x_{ij}}\right|_{\Oo}+1,\  \bar\beta(\lambda)=\left.\frac{\partial \tilde G_{ij}}{\partial x_{il}}\right|_{\Oo},\  \bar\gamma(\lambda)=\left.\frac{\partial \tilde G_{ij}}{\partial x_{kj}}\right|_{\Oo},\  \bar\delta(\lambda)=\left.\frac{\partial \tilde G_{ij}}{\partial x_{kl}}\right|_{\Oo}\,,
\end{equation}
where $k\neq i$ and $l\neq j$, are well-defined, in the sense that they do not depend on the index choices.

\begin{proof}[Proof of~\Cref{THM:realization symmetries} (continued)]
Specializing the invariance properties of a generic $\Gamma$-equivariant vector field to the state-space realization~\eqref{EQ:generic decision dynamics}, we can now prove the necessity part of Theorem~\ref{THM:realization symmetries}. Because possible affine terms in~(\ref{EQ:equivariant dynamics form}a) cannot depend on the indexes, $\Gamma$ equivariance imposes that biases are homogeneous across agents and options. Moreover, well-definiteness of the functions in~\eqref{EQ: balance terms} implies that
\begin{align*}
\lambda S_2'(0)\Am{i}{k}{j}{j}&=\left.\frac{\partial F_{ij}}{\partial z_{kj}}\right|_0=\left.\frac{\partial F_{\bar i\bar j}}{\partial z_{\bar k\bar j}}\right|_0=\lambda S_2'(0)\Am{\bar i}{\bar k}{\bar j}{\bar j},\quad\Rightarrow\quad  \Am{i}{k}{j}{j}=\Am{\bar i}{\bar k}{\bar j}{\bar j}=\gamma,\\
\lambda S_1'(0)\Am{i}{i}{j}{j}&=\left.\frac{\partial F_{ij}}{\partial z_{ij}}\right|_0=\left.\frac{\partial F_{\bar i\bar j}}{\partial z_{\bar i\bar j}}\right|_0=\lambda S_1'(0)\Am{\bar i}{\bar i}{\bar j}{\bar j},\quad\Rightarrow\quad  \Am{i}{i}{j}{l}=\Am{\bar i}{\bar i}{\bar j}{\bar j}=\alpha,\\
\lambda S_1'(0)\Am{i}{i}{j}{l}&=\left.\frac{\partial F_{ij}}{\partial z_{il}}\right|_0=\left.\frac{\partial F_{\bar i\bar j}}{\partial z_{\bar i\bar l}}\right|_0=\lambda S_1'(0)\Am{\bar i}{\bar i}{\bar j}{\bar l},\quad\Rightarrow\quad  \Am{i}{i}{j}{l}=\Am{\bar i}{\bar i}{\bar j}{\bar l}=\beta,\\
\lambda S_2'(0)\Am{i}{k}{j}{l}&=\left.\frac{\partial F_{ij}}{\partial z_{kl}}\right|_0=\left.\frac{\partial F_{\bar i\bar j}}{\partial z_{\bar k\bar l}}\right|_0=\lambda S_2'(0)\Am{\bar i}{\bar k}{\bar j}{\bar l},\quad\Rightarrow\quad  \Am{i}{k}{j}{l}=\Am{\bar i}{\bar k}{\bar j}{\bar l}=\delta,
\end{align*}
for all $i,k,\bar i,\bar k\in\{1,\ldots,\Na\}$, $i\neq k$, $\bar i\neq\bar k$ and all $j,\bar j,l,\bar l\in\{1,\ldots,\No\}$, $l\neq j,\bar l\neq\bar j$.  Thus $\Gamma$ equivariance implies that the agent coupling topology is homogenous and all-to-all.
\end{proof}

The following theorem shows that the sign and magnitude of the various numbers introduced in~\eqref{EQ: balance terms} determine whether bifurcations are likely to appear along the consensus subspace or along the dissensus subspace.
\begin{theorem}\label{THM:cooperation competition}
Let~\eqref{e:symm bifu dyna} be $\Gamma$-equivariant and let $\bar\alpha(\lambda),\bar\beta(\lambda),\bar\gamma(\lambda),\bar\delta(\lambda)$ be defined as in~\eqref{EQ: balance terms}. Suppose that there exists $\lambda_0\in\R$ such that
\begin{subequations}\label{EQ:stability conditions}
\begin{align}
\bar\alpha(\lambda_0)-\bar\beta(\lambda_0)+(\Na-1)(\bar\gamma(\lambda_0)-\bar\delta(\lambda_0))&<1\\
\bar\alpha(\lambda_0)-\bar\beta(\lambda_0)-\bar\gamma(\lambda_0)+\bar\delta(\lambda_0)&<1
\end{align}
\end{subequations}
Suppose, moreover, that there exists $\varepsilon>0$ such that $(\bar\alpha(\lambda)-\bar\beta(\lambda))'\geq \varepsilon$, for all $\lambda$. Then the following hold true.
\begin{enumerate}
	\item The neutral equilibrium is stable for $\lambda=\lambda_0$.
	\item If $\bar\gamma(\lambda)>\bar\delta(\lambda)$ and $(\bar\gamma(\lambda)-\bar\delta(\lambda))'\geq 0$, then the neutral equilibrium loses stability for $\lambda=\lambda_c^a>\lambda_0$ in a $\Gamma$-equivariant bifurcation along the consensus space, where $\lambda_c^a$ satisfies
	\[
	\bar\alpha(\lambda_c^a)-\bar\beta(\lambda_c^a)+(\Na-1)(\bar\gamma(\lambda_c^a)-\bar\delta(\lambda_c^a))=1.
	\]
	\item If $\bar\gamma(\lambda)<\bar\delta(\lambda)$ and $(\bar\gamma(\lambda)-\bar\delta(\lambda))'\leq 0$, then the neutral equilibrium loses stability for $\lambda=\lambda_c^d>\lambda_0$ in a $\Gamma$-equivariant bifurcation along the dissensus space, where $\lambda_c^d$ satisfies
	\[
	\bar\alpha(\lambda_c^d)-\bar\beta(\lambda_c^d)-\bar\gamma(\lambda_c^d)+\bar\delta(\lambda_c^d)=1.
	\]
\end{enumerate}
\end{theorem}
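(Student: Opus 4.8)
The plan is to reduce the statement to a linear stability analysis of the neutral equilibrium $\Oo$, exploiting the fact that the linearization is forced by equivariance to be diagonal with respect to the decomposition $V=W_c\oplus W_d$. First note that $\Oo$ is an equilibrium of~\eqref{e:symm bifu dyna} for every $\lambda$: since $\gamma\Oo=\Oo$ for all $\gamma\in\Gamma$ and $G$ is $\Gamma$-equivariant, in tangent coordinates $G(\Oo,\lambda)$ lies in the fixed-point subspace $\Fix(\Gamma)\subseteq V$, which is trivial because $V_\No$ contains no copy of the trivial $\ES_\No$-representation. Let $L(\lambda)=dG|_\Oo$ denote the Jacobian. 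Because $L(\lambda)$ commutes with the $\Gamma$-action and $W_c\cong V_\No\otimes\ONE_\Na$ and $W_d\cong V_\No\otimes V_\Na$ are absolutely irreducible real $\Gamma$-representations (the standard representations of symmetric groups are absolutely irreducible over $\R$, and a tensor product of absolutely irreducible representations of the two factors is again absolutely irreducible), Schur's lemma gives $\mathrm{End}_\Gamma(W_c)=\mathrm{End}_\Gamma(W_d)=\R$. Hence $L(\lambda)$ acts as a real scalar $c_c(\lambda)$ on $W_c$ and a real scalar $c_d(\lambda)$ on $W_d$, and the neutral equilibrium is linearly stable precisely when $c_c<0$ and $c_d<0$. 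This rules out any Hopf-type crossing and reduces everything to tracking two real numbers.

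The second step is to evaluate $c_c$ and $c_d$ in terms of the balance terms~\eqref{EQ: balance terms}. I would feed in explicit test vectors: for $W_c$ set $z_{kj}=v_j$ with $v\in V_\No$ (all agents identical), and for $W_d$ set $z_{kj}=w_k v_j$ with $v\in V_\No$, $w\in V_\Na$. Differentiating the generic equivariant form~\eqref{EQ:equivariant dynamics form} and sorting the four types of index pairs $(k=i,l=j)$, $(k=i,l\neq j)$, $(k\neq i,l=j)$, $(k\neq i,l\neq j)$ produces sums of $v_l$ over $l\neq j$ and of $w_k$ over $k\neq i$, which collapse via the zero-sum identities $\sum_{l\neq j}v_l=-v_j$ and $\sum_{k\neq i}w_k=-w_i$. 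The averaging correction $\langle\tilde G\rangle^i$ contributes nothing in either case, since its derivative carries the factor $\tfrac1\No\sum_{\bar l}v_{\bar l}=0$. This yields
\[
c_c(\lambda)=\bar\alpha-\bar\beta+(\Na-1)(\bar\gamma-\bar\delta)-1,\qquad c_d(\lambda)=\bar\alpha-\bar\beta-\bar\gamma+\bar\delta-1.
\]
Comparing with~\eqref{EQ:stability conditions}, the first inequality is exactly $c_c(\lambda_0)<0$ and the second is exactly $c_d(\lambda_0)<0$, which proves claim~1.

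For claims~2 and~3 the key observation is the algebraic identity $c_c-c_d=\Na(\bar\gamma-\bar\delta)$, so that the sign of $\bar\gamma-\bar\delta$ dictates which eigenvalue is the larger one and hence crosses zero first as $\lambda$ grows. In case~2, where $\bar\gamma>\bar\delta$, we have $c_c>c_d$; moreover $c_c'=(\bar\alpha-\bar\beta)'+(\Na-1)(\bar\gamma-\bar\delta)'\geq\varepsilon>0$ using $(\bar\alpha-\bar\beta)'\geq\varepsilon$, $(\bar\gamma-\bar\delta)'\geq0$ and $\Na\geq1$. Thus $c_c$ is strictly increasing with $c_c(\lambda)\geq c_c(\lambda_0)+\varepsilon(\lambda-\lambda_0)$, so starting from $c_c(\lambda_0)<0$ it has a unique zero $\lambda_c^a>\lambda_0$, characterized by $\bar\alpha-\bar\beta+(\Na-1)(\bar\gamma-\bar\delta)=1$. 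Since $c_d<c_c$ throughout, $c_d(\lambda_c^a)<0$, so on $[\lambda_0,\lambda_c^a)$ both scalars are negative (neutral stable) and at $\lambda_c^a$ only the consensus eigenvalue vanishes, giving a bifurcation tangent to $W_c$. Case~3 is the mirror image: with $\bar\gamma<\bar\delta$ one has $c_d>c_c$ and $c_d'=(\bar\alpha-\bar\beta)'-(\bar\gamma-\bar\delta)'\geq\varepsilon>0$, so $c_d$ crosses zero first at the unique $\lambda_c^d>\lambda_0$ solving $\bar\alpha-\bar\beta-\bar\gamma+\bar\delta=1$, with $c_c(\lambda_c^d)<0$, yielding a bifurcation tangent to $W_d$.

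The routine parts are the explicit differentiation and the intermediate-value bookkeeping; the step that deserves care is the eigenvalue computation, where one must correctly absorb both the simplex (zero-sum) constraint and the averaging correction, and must confirm absolute irreducibility so that Schur's lemma really returns a real scalar rather than a $2\times2$ block — this is what guarantees the crossings are simple sign changes of real eigenvalues rather than Hopf bifurcations. The comparison identity $c_c-c_d=\Na(\bar\gamma-\bar\delta)$ is the small but essential ingredient that converts the hypothesis on the sign of $\bar\gamma-\bar\delta$ into the statement about which subspace the bifurcation occurs along.
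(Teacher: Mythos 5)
Your proposal is correct and takes essentially the same approach as the paper: block-diagonalize the Jacobian at the neutral point over the two non-isomorphic irreducible components $W_c$ and $W_d$, compute the two scalar eigenvalue functions in terms of $\bar\alpha,\bar\beta,\bar\gamma,\bar\delta$, and use the monotonicity hypotheses to locate the first simple zero crossing, with your identity $c_c-c_d=\Na(\bar\gamma-\bar\delta)$ making explicit the comparison the paper phrases as $c_1>c_2$. The remaining differences are cosmetic: you differentiate the generic equivariant form~\eqref{EQ:equivariant dynamics form} against tensor test vectors and invoke absolute irreducibility (Schur) directly, where the paper writes out the block matrix $J$ and cites genericity, and your scalars omit the paper's positive prefactors $(\No-1)$ and $(\No-1)(\Na-1)$, which is immaterial to every conclusion since the zero sets and crossing directions coincide.
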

\begin{remark}
When $\bar\gamma\equiv\bar\delta$, bifurcations along the consensus and dissensus space happen simultaneously, a phenomenon called {\it mode interaction}. A rigorous analysis of mode interaction was worked out only in the lowest dimensional case $\Na=\No=2$~\cite[Chapter~X]{Golubitsky1985}. We do not address mode interaction further in the present work. See~\cite{Bizyaeva2020} for an analysis of opinion formation between two agents and two options.
\end{remark}
\begin{proof}[Proof]
Let $J:=\frac{\partial G}{\partial{\bf X}}(\Oo)$. A simple computation shows that
\begin{equation*}
J:=
\mscriptsize{
	\begin{bmatrix}
	B_0 & B_1 & \cdots & B_1\\
	B_1& \ddots           & \ddots  & \vdots \\
	\vdots      &\ddots  & \ddots & B_1 \\
	B_1& \cdots & B_1 & B_0
	\end{bmatrix}},\ {\rm where}\ B_0=
	\mscriptsize{
		\begin{bmatrix}
		a & b & \cdots & b\\
		b & \ddots                 & \ddots  & \vdots \\
		\vdots      &\ddots  & \ddots & b \\
		b & \cdots & b & a
		\end{bmatrix}},\ B_1=
	\mscriptsize{
	\begin{bmatrix}
	c & d & \cdots & d\\
	d & \ddots                 & \ddots  & \vdots \\
	\vdots      &\ddots  & \ddots & d\\
	d & \cdots & d & c
	\end{bmatrix}},
\end{equation*}
and $
a=\frac{\No-1}{\No}(-1+\bar\alpha-\bar\beta), \quad b=-\frac{a}{\No-1}, \quad c=\frac{\No-1}{\No}(\bar\gamma-\bar\delta), \quad d=-\frac{c}{\No-1}$.
Because $W_c$ and $W_d$ are non-isomorphic irreducible representations of $\Gamma$, it follows by~\cite[Theorem~2.12]{Golubitsky2002book} that $J$ can be block-diagonalized with respect to $W_c$ and $W_d$. Moreover, generically~\cite[Theorem~1.27]{Golubitsky2002book}, $J|_{W_i}=c_i(\lambda)I$, $i=c,d$. To find $c_i(\lambda)$, $i=1,2$, let ${\bf X}_{agree}\in W_c$ and ${\bf X}_{disagree}\in W_d$. Then, computing, we get
\begin{equation*}\label{EQ:agreement bif condition J}
J\,{\bf X}_{agree}=(\No-1)(-1+\bar\alpha-\bar\beta+(\Na-1)(\bar\gamma-\bar\delta)){\bf X}_{agree},
\end{equation*}
which gives $c_1=(\No-1)(-1+\bar\alpha-\bar\beta+(\Na-1)(\bar\gamma-\bar\delta))$, 
and
\begin{equation*}\label{EQ:disagreement bif condition J}
J\,{\bf X}_{disagree}=(\No-1)(\Na-1)(-1+\bar\alpha-\bar\beta-\bar\gamma+\bar\delta){\bf X}_{disagree},
\end{equation*}
which gives $c_2=(\No-1)(\Na-1)(-1+\bar\alpha-\bar\beta-\bar\gamma+\bar\delta)$.
If~\eqref{EQ:stability conditions} is satisfied, then the eigenvalue along the consensus and the eigenvalue along the dissensus space are both negative and thus the neutral state is stable. If $\bar\gamma(\lambda)>\bar\delta(\lambda)$, then $c_1>c_2$ and the monotonicity assumptions imposed on $\bar\alpha(\lambda)-\bar\beta(\lambda)$ and $\bar\gamma(\lambda)-\bar\delta(\lambda)$ ensure that there must exist $\lambda_c^a$ such that $c_1(\lambda_c^a)=0$ with $c_1'(\lambda_c^a)>0$. This implies a non-degenerate equivariant bifurcation along the consensus space. If $\bar\gamma(\lambda)<\bar\delta(\lambda)$, then $c_1<c_2$ and the imposed monotonicity assumptions ensure that there must exists $\lambda_c^d$ such that $c_2(\lambda_c^d)=0$ with $c_2'(\lambda_c^d)>0$. This implies a non-degenerate equivariant bifurcation along the dissensus space.
\end{proof}

We now interpret~\Cref{THM:cooperation competition} from the perspective of agents' {\it cooperativity/competitivity}, as defined below, and then illustrate the prediction in our numerical realization~\eqref{EQ:generic decision dynamics}. The key parameter to determine whether bifurcations happen along the consensus space or along the dissensus space is the difference $\bar\gamma-\bar\delta$. Recall that $\bar\gamma=\left.\frac{\partial \tilde G_{ij}}{\partial x_{kj}}\right|_{\Oo}$ quantifies how much the opinion of agent $i$ for option $j$ is influenced by other agents' opinions for the same option, whereas $\bar\delta=\left.\frac{\partial \tilde G_{ij}}{\partial x_{kl}}\right|_{\Oo}$ quantifies how much the opinion of agent $i$ for option $j$ is influenced by other agents' opinions for the other options. If $\bar\gamma>\bar\delta$, the opinion of agent $i$ for option $j$ will tend to align to other agents' opinions for the same option. The agents behave cooperatively, thus leading to consensus. If $\bar\gamma<\bar\delta$, the opinion of agent $i$ for option $j$ will instead tend to align with the average opinion of the other agents for all the other options. Because options are mutually exclusive~\eqref{e:total_vote}, this implies that if on average the other agents' opinion for option $j$ increases then the opinion of agent $i$ for option $j$ will decrease. The agents behave competitively, thus leading to dissensus.


\Cref{FIG: coop compe} illustrates our prediction in the analytical model~\eqref{EQ:generic decision dynamics all} for $\Na=17$ and $\No=3$. In this model, the value of $\lambda_c^a$ and $\lambda_c^d$, as defined in the statement of Theorem~\ref{THM: consensus or dissensus}, can easily be computed by observing that in model~\eqref{EQ:generic decision dynamics all}
\[
\bar\alpha=\lambda\alpha, \quad \bar\beta=\lambda\beta, \quad \bar \gamma=\lambda\gamma, \quad \bar\delta=\lambda\delta.
\]
Thus, for $\gamma>\delta$ and $\alpha-\beta>0$, consensus bifurcations happen for \[
\lambda=\lambda^a_c=(\alpha-\beta+(\Na-1)(\gamma-\delta))^{-1}. 
\]
For $\gamma<\delta$ and $\alpha-\beta>0$, dissensus bifurcations happen for \[
\lambda=\lambda^d_c=(\alpha-\beta-\gamma+\delta))^{-1}. 
\]
We stress that in all the simulations presented in \Cref{FIG: coop compe} and in subsequent figures small random agents' biases and coupling heterogeneities were added to weakly violate the $\Gamma$-equivariance assumption. Our theoretical predictions remain robustly true.

\begin{remark}
	With similar computations, Theorem~\ref{THM:cooperation competition} and its specialization to model~\eqref{EQ:generic decision dynamics} can be extended to the case in which $\Gamma=\Gamma_a\times\Gamma_o$, with $\Gamma_a\subset\ES_{\Na}$ acting transitively on $\{1,\ldots,\Na\}$. The resulting critical values, at which consensus or dissensus bifurcations happen, will of course depend on the specific symmetry groups.
\end{remark}

\begin{figure}
\centering
\includegraphics[width=0.75\textwidth]{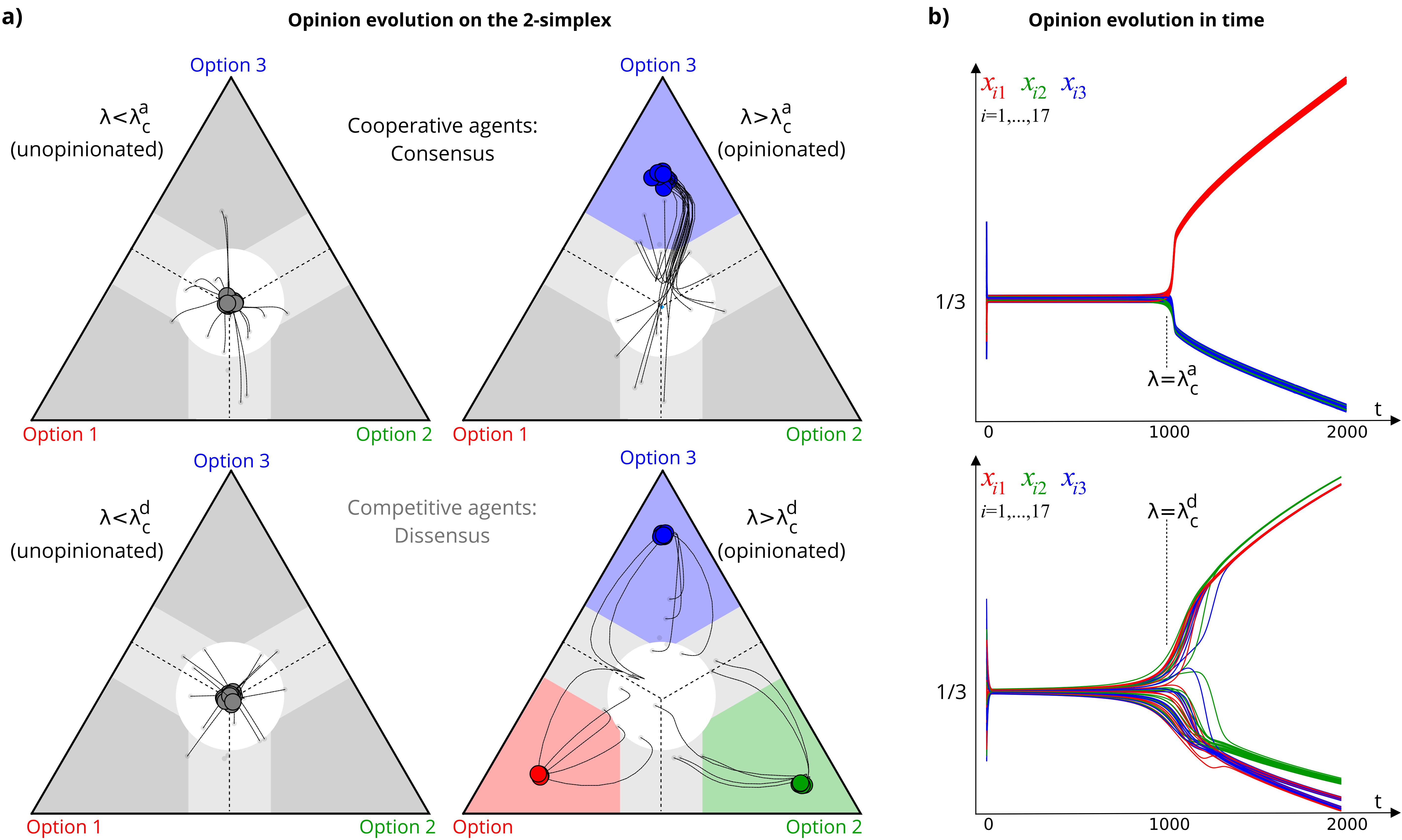}
\caption{{\bf a)} Bifurcation to consensus (top) or to dissensus (bottom) in model~\eqref{EQ:generic decision dynamics all}. When $\gamma>\delta$ (top), a consensus bifurcation, leading to a group consensus state, happens at the critical value $\lambda=\lambda_c^a$. The neutral state is stable for $\lambda<\lambda_c^a$ and unstable for $\lambda>\lambda_c^a$. When $\gamma<\delta$ (bottom), a dissensus bifurcation, leading to a group dissensus state, happens at the critical value $\lambda=\lambda_c^d$. The neutral state is stable for $\lambda<\lambda_c^d$ and unstable for $\lambda>\lambda_c^d$. In each plot, the agents' opinion states are color-coded to reflect the option favored at steady state. Initial conditions are randomly picked and are depicted by the light gray circles. The agents' opinion trajectories are depicted by the thin black lines. {\bf b)} Time evolution of agents' opinion $x_{i1},x_{i2},x_{i3}$, $i=1,\ldots,17$, in model~\eqref{EQ:generic decision dynamics all} for the same parameters as in {\bf a)} but with $\lambda$ smoothly increasing from below to above the bifurcation critical value. Parameters:  for consensus, $\alpha=0$, $\beta=-1.5$, $\gamma=0.2$, $\delta=0.1$, $\lambda=\lambda_c^a\pm 0.05$; for dissensus, $\alpha=0$, $\beta=-0.5$, $\gamma=0.1$, $\delta=0.2$, $\lambda=\lambda_c^d\pm 0.05$; in {\bf b)}, $\lambda(t)=\lambda_c^{a,d}-0.2+0.4t/2000$.}\label{FIG: coop compe}
\end{figure}

\subsection{Moderate and extremist opinions arise from symmetry}
\label{ssec: modarate and extremists}

Another model-independent prediction of our theory is that, generically, dissensus can appear in only two forms: {\it uniform} dissensus and {\it moderate/extremist} dissensus. \Cref{FIG: moderate extremist} illustrates this model-independent fact in the model~\eqref{EQ:generic decision dynamics all}. In a uniform dissensus state (\Cref{FIG: moderate extremist}a), the agents' opinions are uniformly spread across well-defined clusters, each cluster rejecting a different option with the same strength. On the contrary, in a moderate/extremist dissensus state (\Cref{FIG: moderate extremist}b) the agents' opinions are spread across the options in an asymmetric fashion. A small group of agents, the {\it extremists}, develop a strong opinion rejecting one of the options. A larger group of agents, the {\it moderates}, develop a diametrically opposed opinion as compared to the extremists but with a weaker strength. In~\Cref{FIG: moderate extremist}b, the extremists' opinion in rejection for Option~2 is so strong that consensus is not possible although a larger group of agents agree in favoring that option. The numerical explorations we performed on our analytical model suggest that moderate/extremist dissensus happens as a stable or meta-stable state\footnote{We have not been able to find parameter combinations for which moderate/extremist dissensus is stable in the proposed model. This observation is however likely to be model-dependent, in the sense that in other realizations of the equivariant opinion formation dynamics this dissensus state might be stable for suitable parameter choices.} at transition between uniform dissensus and consensus, i.e., for $\gamma<\delta$, $\gamma\approx\delta$, whereas for $\gamma$ sufficiently smaller than $\delta$ only uniform dissensus is observed. To rigorously analyze stability properties of this dissensus state in a model-independent fashion one needs to develop equivariant singularity theory for $\ES_n\times\ES_k$-equivariant bifurcations, $n,k\geq 3$, which is a hard problem, likely to be intractable in general\footnote{Equivariant singularity theory has been rigorously developed only in a handful of cases, i.e., the ones analyzed in~\cite{Golubitsky1985-2}.}.

\begin{figure}
	\centering
	\includegraphics[width=0.95\textwidth]{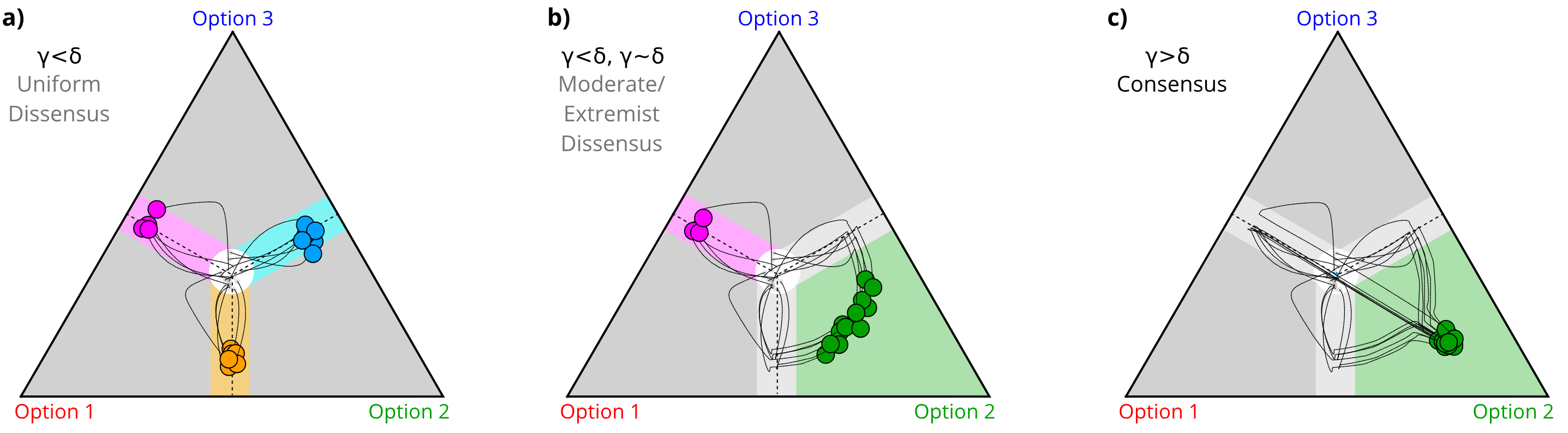}
	\caption{Uniform dissensus (left), moderate/extremist dissensus (center), and consensus (right). In the uniform dissensus state, agents' opinions diverge from the neutral state to form three sharply separated clusters, one for each option, each with roughly the same number of agents and the same opinion strength. In the moderate/extremist dissensus state, agents' opinions diverge from the neutral state in an asymmetric fashion. One small cluster, the extremists (purple), takes a strong stance in disfavor of one of the options. A larger cluster, the moderates (green), maintain a more moderate stance in opposition to the opinion of the extremists. In consensus, all agents favor the same option. Parameters: $\alpha=1.1$, $\beta=-1.0$, $\gamma=0.05$, $\delta(t)=5\gamma/4-\gamma t/(8000)$, $\lambda(t)=0.01+(\alpha-\beta-\gamma+\delta(t))^{-1}$, if $\gamma<\delta(t)$, $\lambda(t)=0.01+(\alpha-\beta+(\Na-1)(\gamma-\delta(t)))^{-1}$, if $\gamma>\delta(t)$. }\label{FIG: moderate extremist}
\end{figure}

Uniform and moderate/extremist dissensus arise in our theory as the {\it axial subgroups} of the action of the opinion-formation dynamics symmetry group inside the dissensus space. Equivariant bifurcation theory predicts that, generically, the symmetry group of the equilibria appearing at a bifurcation from the neutral state must be an axial subgroup. We also conveyed this genericity result in \Cref{FIG:consensus_polar}, where the two sketched branches of dissensus states (d1-d2 and d3-d4) correspond to uniform dissensus and to moderate/extremist dissensus, respectively. In Sections~\ref{SEC: examples} and~\ref{SEC:main results}, we provide a rigorous discussion and proofs of this generic fact.


\begin{remark}
	The results presented in this section do {\it not} generalize to the case in which the opinion dynamics has symmetry $\Gamma=\Gamma_a\times\Gamma_o$, with transitive $\Gamma_a\subset \ES_{\Na}$ and $\Gamma_o\subset \ES_{\No}$. The genericity of uniform and moderate/extremist dissensus depends indeed on the specific dissensus axial subgroup structure and, thus, might no generalize to other symmetry groups.
\end{remark}

\subsection{The ``switchiness" of opinion formation depends on the number of agents and the number of options}
\label{ssec: switch-likeness}

Opinion formation (either toward consensus or toward dissensus) can happen in two qualitatively distinct manners: 1) continuous formation and 2) switch-like formation. In the continuous case, the agents' opinions change continuously as a function of the bifurcation parameter, including at the passage through the bifurcation point at which the neutral state loses stability. In the switch-like case, at the passage through the bifurcation point, the agents' opinions change abruptly as a function of the bifurcation parameter: when the neutral state loses stability, the agents' opinions {\it switch} to a new opinion state almost discontinuously.

\Cref{FIG: switch-like} illustrates these two possible behaviors for both consensus and dissensus bifurcations for two and three options in a group of $\Na=17$ agents. In the simulations, the bifurcation parameter is slowly increased through the critical value at which the neutral state loses stability. In the case of a consensus bifurcation (\Cref{FIG: switch-like} a1 and b1) and a dissensus bifurcation (\Cref{FIG: switch-like} a2 and b2), there is a clear difference between the two-option scenario (a1,a2) and the three-option scenario (b1,b2). In the two-option scenario, opinion formation is continuous. As the bifurcation parameter increases, the agents' opinions change slowly and continuously in time in favor of Option~1 (a1) or to a dissensus state (a2). In the three-option scenario, when the bifurcation parameter crosses the critical value, the agents' opinions exhibit a jump in favor of Option~2 (b1) or toward a dissensus state (b2).

\begin{figure}
	\centering
	\includegraphics[width=\textwidth]{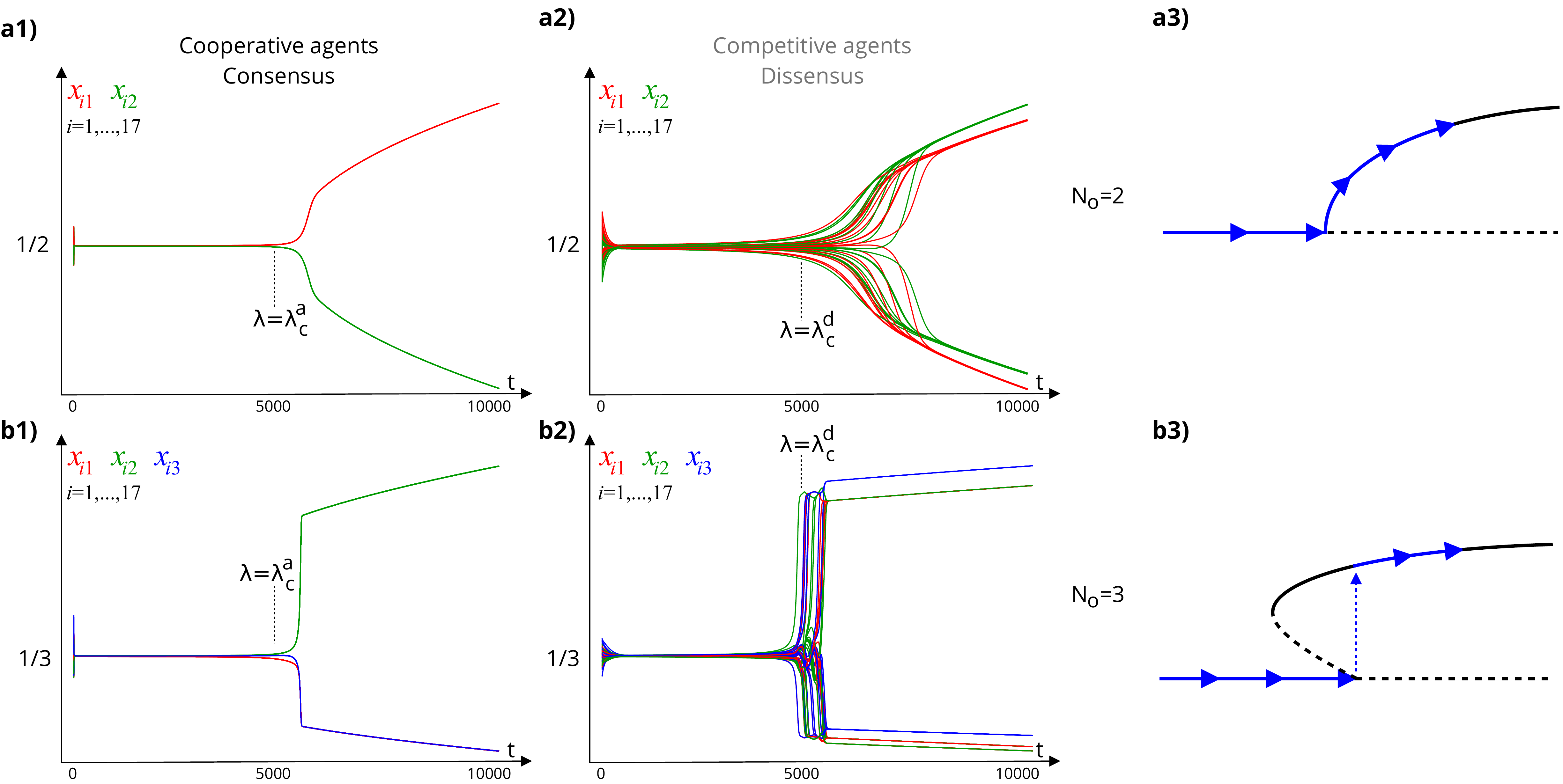}
	\caption{Switch-like ({\bf b1},{\bf b2}) versus continuous ({\bf a1},{\bf a2}) opinion formation. When $\No=2$ ({\bf a1},{\bf a2}), both consensus and dissensus opinion formation are continuous. As the bifurcation parameter is slowly increased through the critical value at which the neutral state becomes unstable, the agents' opinion state continuously follows a bifurcation branch of stable equilibria. When $\No=3$ ({\bf b1},{\bf b2}), both consensus and dissensus opinion formation are switch-like. As the bifurcation parameter is slowly increased through the critical value at which the neutral state becomes unstable, the agents' opinion state jumps away from it and switches to a novel, distinct opinion state because no stable steady state branches appear at the bifurcation. Bifurcation diagram interpretation of continuous ({\bf a3}) and switch-like ({\bf b3}) opinion formation. Solid bifurcation branches are stable and dashed bifurcation branches are unstable. Parameters: $\alpha=0$, $\beta=-1.5$, $\gamma=0.2$, $\delta=0.1$, $\lambda(t)=(\alpha-\beta+(\Na-1)(\gamma-\delta))-0.2+0.4t/10000$, for consensus; $\alpha=0$, $\beta=-0.5$, $\gamma=0.1$, $\delta=0.2$, $\lambda(t)=(\alpha-\beta-\gamma+\delta)-0.2+0.4t/10000$, for dissensus.}\label{FIG: switch-like}
\end{figure}

Of course, as for the solution of any smooth dynamical system, none of the dynamical behaviors described above is actually discontinuous. What engenders either a switch-like or a continuous opinion-formation behavior is how the {\it stable} opinion equilibria change as a function of the bifurcation parameter (\Cref{FIG: switch-like} a3 and b3). In continuous opinion formation, some of the opinion branches bifurcating from the neutral equilibrium are made of stable equilibria. In this case, the agents' opinion state slides along these stable equilibrium branches continuously away from the neutral equilibrium. In switch-like opinion formation, all the opinion branches bifurcating from the neutral equilibrium are made of unstable equilibria. The agents' opinion state is thus obliged to jump to a new stable opinion state far from the neutral equilibrium, which engenders the switch-like opinion-formation behavior. Our theory predicts that consensus opinion formation is always switch-like for $\No\geq 3$. We also predict that dissensus opinion formation is always switch-like when $\No\Na\geq 9$ (See Section~\ref{SSEC:quadratic equivariant} for details). Consensus opinion formation with $\No=2$ is continuous when the organizing bifurcation is a supercritical pitchfork and switch-like when the organizing bifurcation is a subcritical pitchfork~\cite{Gray2018}. Similarly, for $\No\Na\leq 6$, dissensus opinion formation can be either continuous or switch-like depending on whether the organizing pitchfork bifurcations are supercritical or subcritical.

\begin{remark}
	The results presented in this section also do {\it not} generalize to the case in which the opinion dynamics has symmetry $\Gamma=\Gamma_a\times\Gamma_o$, with transitive $\Gamma_a\subset \ES_{\Na}$ and $\Gamma_o\subset \ES_{\No}$. Stability of bifurcation branches is indeed dependent on the properties of the symmetry group action inside the consensus and dissensus space, in particular, the existence of consensus or dissensus quadratic equivariants.
\end{remark}


\section{Some applications of the theory}
\label{ssec: applications}

\subsection{Animal groups group decision making and motion}

Animal groups that are social often make collective decisions when they forage, reproduce, avoid threats, and, eventually, survive. Honeybee colonies have been studied in fine behavioral detail during the critical period in which a decision is made on the location of a new nest~\cite{Seeley2010}. Studies have revealed a rich repertoire of signals through which honeybees express preferences, convince others, and eventually, come to a democratic consensus on one of the alternatives. Some animal groups must also make decisions rapidly while in motion. Examples include groups of strongly schooling fish that must collectively decide in which direction to swim without splitting the group (for instance, in the presence of two spatially separated sources of food or when a predator approaches them)~\cite{Couzin2005a}.

In various works~\cite{Leonard2012,Couzin2011,Pinkoviezky2018,Seeley2012,Reina2017,Gray2018,Pais2013,Nabet2009}, animal group collective decision making has been mathematically described as a bifurcation phenomenon, where the state variables describe the opinion state of the group and the bifurcation parameter capture example-specific properties of the decision-making process. For instance, in the case of honeybee decision making, the bifurcation parameter is mostly related to the quality of the alternatives~\cite{Pais2013}, whereas for animal groups in motion, the bifurcation parameter is mainly related to the geometry of the alternatives (e.g., their distance to the group or their location relative to the group motion direction)~\cite{Couzin2011,Couzin2005a,Leonard2012,Pinkoviezky2018}.

The model-independent theoretical framework and the analytical realization~\eqref{EQ:generic decision dynamics} developed here capture and generalize all the aforementioned modeling efforts. The possibility of making rigorous predictions about decision-making behaviors in animal groups at the single agent level, for an arbitrary number of agents, an arbitrary number of options, and for both consensus and dissensus outcomes, is new. Previous analytical models of animal group decision making either make mean-field approximations (i.e., average out the agent level), focus on consensus only, or restrict to the two alternative case. Overcoming these limitations might open novel interaction paths between experimental and theoretical studies of decision making in animal groups as well as providing the basis to formally translate animal group behavior into artificial multi-agent systems.

\subsection{Cognitive neuroscience}

Low-level perceptual decision making refers to the cognitive phenomenon of distinguishing between two (or more), usually incompatible, perceptual alternatives. Low-level perceptual decision making can be exemplified by the classical random dot experiment~\cite{Williams1984}. This type of decision making happens fast ($<1s$) and is implemented in sensory and motor cortices without requiring either short-term (working) or long-term memory, or any form of ``abstract thinking". 

Behavioral data obtained from the random-dot experiment is usually explained using phenomenological models, like accumulator, counter, or race models of decision-making~\cite{Bogacz2006}. In these models, noisy evidence for perceptual alternatives accumulates by integration until some threshold is reached. Although some experimental evidence of perceptual integration has been found in mean-field (coarse) measures of neuronal activity~\cite{Hanks2017}, phenomenological models fail to capture the neuronal mechanisms underlying perceptual decision making at the cellular and circuit level.

Attractor network models constitute a more biologically-grounded alternative to understand the mechanisms behind low-level perceptual decision making, in particular, when basic properties of biological neuronal networks (such as, recurrent connections and intrinsic and synaptic short-term plasticity) are taken into account. In these models, decision making happens as bifurcations between basal/indecisive states and excited decision states (see~\cite{Deco2013} for a review of attractor network models of perceptual decision making). Interestingly, recent work on cognitive control~\cite{Musslick2019} and binocular rivalry~\cite{Diekman2012} suggests that similar models also describe higher-level decision making (for instance, allocation of cognitive efforts in a multi-task setting or interpretation of ambiguous binocular visual stimuli). This observation suggests a seemingly overlooked connection between low- and high-level decision making in the brain.

The model-independent theory we are proposing, together with the analytical realization~\eqref{EQ:generic decision dynamics}, provide the means to start the rigorous exploration of multi-alternative (more than two) perceptual decision making. There are indeed considerable qualitative differences between two and three option decision making in terms of the continuous or swtich-like nature of opinion formation. Because of this fundamental difference, we believe that moving from two to three options in perceptual decision making could provide a novel source of knowledge in understanding brain dynamics.

\subsection{Socio-political opinion networks}

%
%
%


A growing body of literature is studying opinion formation in socio-political networks from a mathematical modeling perspective~\cite{Castellano2009,Evans2018,Li2013,Stewart2019,Vasconcelos2019,Fortunato2005,Nedic2012,Parsegov2017,Altafini2013,Blondel2009,DeGroot1974,Friedkin1999,Hegselmann2002,Ye2019,Jia2015,Jia2019,Lorenz2007,Pan2018,Galam1995,Galam2007,Cisneros2019,Deffuant2000}. One of the reasons for this growing interest in socio-political opinion networks is the increasing occurrence, during the last decades, of polarized political debate, both at the elite and voter level~\cite{McCarty2019}. For the sake of this paper, what is meant by {\it polarization} is probably best illustrated with an example. During decades of political debate about, for example, abortion, the electorate opinion in the United States about the subject (strongly disfavor, disfavor, weakly disfavor, neutral, weakly favor, favor, strongly favor) changed from most of the Democrats (Republicans) having moderate opinions in favor (disfavor) of abortion to most of the Democrats (Republicans) having extreme opinions in favor (disfavor) of abortion~\cite{Mouw2001,Jelen2003}.

Various modeling approaches bring different mechanistic accounts for the possible origin of polarized political discourse. In bounded confidence models~\cite{Deffuant2000,Hegselmann2002,Hegselmann2002,Nedic2012,Lorenz2007}, for instance, the appearance of polarization is mainly determined by the model initial conditions, representing the agents' priori beliefs. In linear consensus-like models~\cite{Altafini2013,Pan2018,Cisneros2019}, a specific form of polarization, called bipartite consensus, emerge from specific signed network topologies, called structurally balanced networks.

Our model-independent approach reconciles these modeling efforts. For instance, our model becomes sensitive to initial conditions if the bifurcation parameter is already beyond the singularity. In this regime, the model is multistable, with each attractor corresponding to a different (consensus or dissensus) opinion configuration. In the dissensus regime, moderate prior beliefs (i.e., initial conditions close to neutral) lead to polarized (i.e., much further away from neutral) final opinions in one of the dissensus states predicted by the equivariant theory. At the same time, as explained in Remarks~\ref{RMK: coopetitive 1} and~\ref{RMK: coopetitive 2}, our equivariant approach naturally accommodates structured interaction topologies (for instance, structurally balanced ones) by suitably changing the model symmetry group. Our modeling approach is thus able to unify bounded-confidence, linear/structurally balanced, and maybe other, opinion network models in a single theoretical framework. This might allow for a more rigorous comparison between different hypothesis and theories about the origin of polarized political debates, as well as other questions in socio-political opinion network dynamics.

\subsection{Phenotypic decision making}

The universal biological phenomenon of phenotypic differentiation provides a wide-spread example of dissensus decision making and opinion formation. A change in phenotype at the single cell level can be seen as a cellular decision-making process realized through the creation of, disappearance of, and transition between, different attractors of the cell molecular regulatory network~\cite{Balazsi2011,Xiong2003,Ferrell1998}. In social micro-organisms, group fitness is increased when cells undergo phenotypic differentiation and make cellular decisions in a coordinated fashion. Typical examples are bacterial groups reacting with phenotypic differentiation in response to environmental stressors, like the presence of antibiotics~\cite{Bottagisio2019} or the scarcity of nutrients~\cite{Rivera2019}, via quorum sensing~\cite{Miller2001,Schrom2020}.

The collective decision-making process associated with phenotypic differentiation is of the dissensus type because different cells make different decisions, i.e., they transition to different phenotypes. The theory developed in the present paper provides a novel theoretical framework to support experimental research in phenotypic differentiation and, more generally, collective decision making in social microorganisms.  Over long (multiple generation) timescales and when averaging out the cellular level, phenotypic differentiation can lead to speciation via natural selection. Equivariant bifurcation theory was successfully applied in modeling speciation~\cite{Stewart2003a}. It may be possible to generalize this idea to the temporal and spatial scales of single cells reacting in real time to the environment.

%

\subsection{Distributed dynamic task-allocation}

In collective robotic systems, each agent represents a robot and the $\No$ options represent a set of $\No$ possible tasks to be accomplished by the robot swarm.
Each robot must dynamically (i.e., in real time) decide which task to accomplish in a distributed fashion, that is, solely relying on the information collected through its sensors, the information received by other robots in the swarm, and without a centralized controller \cite{Bayindir2016,Krieger2000,Brambilla2013}.
Agents' opinions represent each robot's ``motivation" to develop a given task.  To achieve an efficient distribution of labor, different robots could be motivated to accomplish different tasks. In terms of our opinion formation theory, dissensus could thus be a natural state for a distributed dynamic task-allocation problem.

The idea of using nonlinear opinion formation dynamics  to realize dynamic task allocation in robotic systems was originally proposed in~\cite{Reverdy2018} in the case of a single robot deciding over two possible tasks. The key ideas in~\cite{Reverdy2018} are the following. The robot actions happen in the physical space, which evolves dynamically in continuous time. Representing the robot's motivational and decision state in continuously evolving dynamical variables creates novel design possibilities with respect to more classical logic- and optimization-based approaches. Namely, it creates the possibility of suitably coupling the physical world dynamics with the robot's motivational dynamics in real time without requiring any hybrid dynamic/logic or continuous/discrete interface. Our theoretical framework and vector field realization provide  the means to extend these ideas to multi-robot, multi-task problems.

%
%

\section{An introduction to equivariant bifurcation theory and its application to consensus and dissensus}
\label{SEC: preliminaries}

This section is divided into three parts. First, we introduce the background needed to formalize the symmetry-based analysis of opinion formation.  Then we discuss the equivariant branching lemma, the basic result for finding symmetry-breaking solutions in equivariant bifurcation problems.  Finally, we use symmetry-breaking to set up the analysis of consensus and dissensus solutions.

\subsection{Fixed-point subspaces and isotropy subgroups}

Symmetries of a system of differential equations are invertible linear maps that take solutions to solutions. More specifically, let 
\begin{equation} \label{e:G}
\dot{x} = G(x)
\end{equation}
where $x\in\R^m$ and $G:\R^m\to\R^m$.  Then the invertible linear map $\gamma:\R^m\to\R^m$ takes solutions to solutions if and only if $G$ is $\gamma$-equivariant; that is,
\[
G(\gamma x) = \gamma G(x) \quad\forall x\in\R^m.
\]
Let $\Gamma$ be the group of symmetries of $G$; we say that $G$ is $\Gamma$-equivariant.

The key idea in finding equilibria of equivariant systems \eqref{e:G} is the symmetry forced existence of flow-invariant subspaces.  

\begin{definition}
	Let $\Gamma$ be a group acting on $\R^m$ and let $\Sigma\in \Gamma$ be a subgroup. The 
	{\it fixed-point subspace} of $\Sigma$ is
	\[
	\Fix(\Sigma)=\{x\in\R^m \ :\ \gamma x=x\quad \forall\gamma\in\Gamma \}
	\]
\end{definition}
Observe that $\Fix(\Sigma)$ is a vector subspace of $\R^m$.
We claim that $G:\Fix(\Sigma)\to\Fix(\Sigma)$. The proof is short.  Suppose that $x\in\Fix(\Sigma)$.  Then $G(x) = G(\sigma x) = \sigma G(x)$ for all $\sigma\in\Sigma$. It follows that $G(x)\in \Fix(\Sigma)$. 

It follows that we can find solutions to $G=0$ by searching for solutions to $G|\Fix(\Sigma) = 0$ for subgroups $\Sigma$ with a certain special symmetry property; and this is the approach that we take.  

A point $x\in\R^n$ has symmetry group $\Sigma_x\subset\Gamma$ where $\sigma\in\Sigma_x$ if and only if $\sigma x = x$.  Technically, $\Sigma_x$ is called the {\em isotropy subgroup} of $x$. As we next explain, the important idea from equivariant bifurcation theory is that certain isotropy subgroups (called axial subgroups) generically lead to families of equilibria with symmetry $\Sigma_x$.  Thus, the analytic problem of finding equilibria to systems of differential equations is reduced to the algebraic problem of finding axial subgroups.

\subsection{Equivariant bifurcation theory}

Bifurcation theory assumes that the vector field $G$ in \eqref{e:G} depends explicitly on a parameter $\lambda$.   That is, we assume
\begin{equation} \label{s:Glambda}
\dot{x} = G(x,\lambda)
\end{equation}
where $x\in\R^m$, $\lambda\in\R$, and $G:\R^m\times\R\to\R^m$.  In equivariant bifurcation theory, we assume $G$ is $\Gamma$-equivariant; that is, $G(\gamma x, \lambda) = \gamma G(x,\lambda)$ for all $\gamma\in\Gamma$. 

Moreover, suppose $\Fix(\Gamma) = \{0\}$ (which happens when $\Gamma$ acts irreducibly on $\R^m$).  Then the subspace $\{0\}$ is flow-invariant and $G(0,\lambda) = 0$ for all $\lambda$.  We say that $x=0$ is a {\em trivial equilibrium} and we will look for branches of solutions that bifurcate from the trivial equilibrium.  

Bifurcation from the trivial solution occurs at $\lambda_0$ if the Jacobian $J(\lambda_0)  =  dG(0,\lambda_0)$ is singular.  Let $V$ be the kernel of $J(\lambda_0)$.  The subspace $V$ is $\Gamma$-invariant and generically, $V$ is an absolutely irreducible representation of $\Gamma$.   That is:

\begin{definition}\label{DEF: abs irre} \rm
	A group $\Gamma$ acts {\em absolutely irreducibly} on the subspace $V$ if the only linear maps on $V$ that commute with $\Gamma$ are multiples of the identity.
\end{definition}

Without loss of generality we can assume that $\lambda_0 = 0$.  The chain rule implies that $J(\lambda)$ commutes with $\Gamma$ and absolute irreducibility implies that $J(\lambda) = c(\lambda)I_m$.  It follows that steady-state bifurcation occurs at $\lambda = 0$ if and only if c(0) = 0.

The equivariant branching lemma provides the basic tool for finding branches of bifurcating solutions in 
$\Gamma$-equivariant bifurcation problems.  Before stating this lemma we present the central definition in the theory.

\begin{definition} \rm
	Let $x\in\R^m$ and $\Gamma$ act on $\R^m$. The isotropy subgroup $\Sigma_x$
	is {\em axial} if $\dim\,\Fix(\Sigma_x) = 1$.
\end{definition}

\begin{lemma}[\bf Equivariant Branching Lemma]\label{LEM: equiv br lemma}
	Assume $\Gamma$ acts absolutely irreducibly on $\R^m$ and $G:\R^m\times\R\to\R^m$ is $\Gamma$-equivariant.  Then
	\[
	\begin{array}{rcl}
	G(0,\lambda)&\equiv&0\\
	(dG)_{(0,\lambda)}&=&c(\lambda) I_m
	\end{array}
	\]
	Assume the bifurcation condition  $c(0) = 0$ and the non-degeneracy condition $c'0)\neq 0$.  
	Then, for every axial subgroup $\Sigma\subset\Gamma$ there exists a unique branch of solutions to $G(x, \lambda) = 0$ emanating from $(0,0)$, where the symmetry group of the bifurcating solutions is $\Sigma$.
\end{lemma}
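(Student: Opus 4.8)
The plan is to collapse the $m$-dimensional bifurcation problem onto the one-dimensional fixed-point subspace $\Fix(\Sigma)$ of the axial subgroup $\Sigma$ and then to apply the Implicit Function Theorem to the resulting scalar equation. The two displayed identities $G(0,\lambda)\equiv 0$ and $(dG)_{(0,\lambda)}=c(\lambda)I_m$ have already been established in the preceding discussion---the first from $\Fix(\Gamma)=\{0\}$ together with equivariance, the second from the chain rule together with absolute irreducibility---so I would take them as given.

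Since $\Sigma$ is axial, $\Fix(\Sigma)$ is a line; I would fix a nonzero $v_0\in\Fix(\Sigma)$, so that $\Fix(\Sigma)=\{t v_0 : t\in\R\}$. By the invariance property proved earlier in this section, $G$ maps $\Fix(\Sigma)$ into itself, so there is a smooth scalar function $g$ with $G(tv_0,\lambda)=g(t,\lambda)\,v_0$. The trivial-equilibrium identity gives $g(0,\lambda)\equiv 0$, and Hadamard's lemma then lets me write $g(t,\lambda)=t\,h(t,\lambda)$ with $h$ smooth; the nontrivial zeros of $G|_{\Fix(\Sigma)}$ correspond precisely to the zeros of $h$.

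I would then read off the derivatives of $h$ at the origin. Differentiating $g=th$ gives $h(0,\lambda)=\partial_t g(0,\lambda)$, and since $\partial_t G(tv_0,\lambda)|_{t=0}=(dG)_{(0,\lambda)}v_0=c(\lambda)v_0$, this yields $h(0,\lambda)=c(\lambda)$. Hence $h(0,0)=c(0)=0$ while $\partial_\lambda h(0,0)=c'(0)\neq 0$ by the non-degeneracy hypothesis. The Implicit Function Theorem applied to $h(t,\lambda)=0$ at $(0,0)$ produces a unique smooth $\lambda=\lambda(t)$ with $\lambda(0)=0$ and $h(t,\lambda(t))\equiv 0$ near $t=0$; the curve $t\mapsto(tv_0,\lambda(t))$ is the asserted branch of equilibria. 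Every point $tv_0$ with $t\neq 0$ lies in $\Fix(\Sigma)$, so its isotropy contains $\Sigma$, and because $\Sigma$ is by definition the full isotropy subgroup of a generic point of its own fixed-point line, the symmetry of the branch is exactly $\Sigma$.

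The step I expect to be the crux is the identification $h(0,\lambda)=c(\lambda)$, which ties the reduced scalar derivative to the critical eigenvalue of the Jacobian: it is precisely this link that converts the hypothesis $c'(0)\neq 0$ into the transversality condition $\partial_\lambda h(0,0)\neq 0$ that drives the Implicit Function Theorem and delivers both existence and uniqueness of the branch. Confirming that the branch carries symmetry exactly $\Sigma$, rather than some strictly larger subgroup, is a smaller but genuine point, which I would settle using the defining property of an isotropy subgroup.
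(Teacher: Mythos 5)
Your proof is correct and is essentially the canonical argument: the paper itself states this lemma without proof, deferring to \cite[Lemma~1.31]{Golubitsky2002book} and \cite[Theorem~3.3]{Golubitsky1985-2}, and your reduction to the line $\Fix(\Sigma)$, factorization $g(t,\lambda)=t\,h(t,\lambda)$ with $h(0,\lambda)=c(\lambda)$, and application of the Implicit Function Theorem is exactly the standard proof found in those references, built from the same ingredients the paper establishes beforehand (flow-invariance of $\Fix(\Sigma)$, the trivial equilibrium, and $(dG)_{(0,\lambda)}=c(\lambda)I_m$). The only point worth tightening is the final symmetry claim: by linearity of the action, every nonzero point of $\Fix(\Sigma)$ has the same isotropy subgroup, namely $\Sigma$ itself (since $\gamma(tv_0)=tv_0$ with $t\neq 0$ forces $\gamma v_0=v_0$), which makes "exactly $\Sigma$, not larger" immediate rather than an appeal to genericity.
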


For each absolutely irreducible representation of $\Gamma$, the Equivariant Branching Lemma determines branches of solutions that correspond to axial subgroups of $\Gamma$.  There may be solutions with non-axial symmetry that bifurcate from the origin for an open set of bifurcation problems~\cite{Field1989}, but the axial solutions are the ones that occur on an open dense set of bifurcation problems.  Moreover, the axial subgroups and their fixed-point subspaces provide the easiest way to determine equilibria.   

The following results~\cite[Lemma~1.31]{Golubitsky2002book} help explain why the equivariant branching lemma is so useful.

\begin{theorem}\label{thm:generic facts}
	Let $\Gamma$ be a compact Lie group. Suppose the equivariant bifurcation problem $G$ has a steady-state bifurcation from $x=0$ at $\lambda_0 = 0$. Then $A_0 = (dG)_{0,0}$ has a $0$ eigenvalue and the following results hold generically.
	\begin{enumerate}
		\item[(a)] $0$ is the only eigenvalue of $A_0$ on the imaginary axis.
		\item[(b)] The generalized eigenspace corresponding to $0$ is $\ker\,A_0$.
		\item[(c)] $\Gamma$ acts absolutely irreducibly on $\ker\,A_0$.
	\end{enumerate}
\end{theorem}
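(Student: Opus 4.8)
The plan is to use the fact that equivariance forces the linearization along the trivial branch into the commuting algebra of the $\Gamma$-action, and then to reduce the three genericity claims to a transversality/codimension count inside that finite-dimensional algebra. First I would record the structural constraint: differentiating the relation $G(\gamma x,\lambda)=\gamma G(x,\lambda)$ at $x=0$ and using $G(0,\lambda)\equiv 0$ shows that the Jacobian path $A(\lambda):=(dG)_{(0,\lambda)}$ commutes with $\Gamma$, i.e.\ $A(\lambda)\in\mathrm{End}_\Gamma(\R^m)$ for every $\lambda$. Decomposing $\R^m$ into $\Gamma$-isotypic components $U_1\oplus\cdots\oplus U_t$, where $U_k$ is a sum of $n_k$ copies of a real irreducible $W_k$, Schur's lemma over $\R$ gives $\mathrm{End}_\Gamma(\R^m)\cong\prod_k M_{n_k}(\mathbb{D}_k)$, with division algebra $\mathbb{D}_k=\mathrm{End}_\Gamma(W_k)$ equal to $\R$, $\C$ or $\mathbb{H}$ (the real, complex and quaternionic types). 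Hence $A(\lambda)$ is block diagonal with blocks $A_k(\lambda)\in M_{n_k}(\mathbb{D}_k)$, its eigenvalues and kernel are computed blockwise, and $\ker A_k(\lambda)$ is a right $\mathbb{D}_k$-module that is nonzero exactly when the $\mathbb{D}_k$-matrix $A_k(\lambda)$ drops rank.

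Next I would transfer genericity from $G$ to the path $A(\cdot)$. Arbitrary perturbations of the $1$-jet of $G$ along the trivial branch realize arbitrary perturbations of $A(\cdot)$ inside $\mathrm{End}_\Gamma(\R^m)$, so by Thom parametric transversality a property holding for an open dense set of paths pulls back to a residual set of $\Gamma$-equivariant bifurcation problems. It therefore suffices to verify (a)--(c) for a path transverse to the relevant singular strata of $\prod_k M_{n_k}(\mathbb{D}_k)$. The key quantitative inputs are that the rank-dropping (corank $\ge 1$) locus of $M_n(\mathbb{D})$ has real codimension $\dim_\R\mathbb{D}$, namely $1$, $2$, $4$ for $\mathbb{D}=\R,\C,\mathbb{H}$, and that the stratum of matrices whose zero eigenvalue has algebraic multiplicity $\ge 2$ (a nilpotent Jordan block, or a double root of the characteristic polynomial at $0$) has codimension $\ge 2$.

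With these codimensions in hand the three statements follow from the fact that a single real parameter meets transversally only codimension-$1$ strata. A genuine zero eigenvalue of $A_0$ at the isolated value $\lambda_0=0$ can come only from a real-type block, $\mathbb{D}_k=\R$, since the singular locus of a complex- or quaternionic-type block has codimension $\ge 2$ (such blocks can at most contribute Hopf crossings, not a zero eigenvalue). Requiring a second block to be critical, or any block to carry an extra purely imaginary eigenvalue, at the same $\lambda_0$ is an additional codimension-$\ge 1$ condition, generically avoided, which gives (a). Within the critical real block the zero eigenvalue is then algebraically simple; writing $A_k=\mathrm{Id}_{W_k}\otimes M$ on $U_k\cong W_k\otimes_\R\R^{n_k}$ with $M\in M_{n_k}(\R)$ having simple eigenvalue $0$, the generalized kernel of $A_k$ equals $\ker A_k=W_k\otimes\ker M\cong W_k$. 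Since no other block contributes to the zero-generalized-eigenspace, this yields (b), and because $\mathbb{D}_k=\R$ the kernel $W_k$ is absolutely irreducible, which is (c).

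The step I expect to be the main obstacle is the genericity transfer itself: one must check that the jet conditions isolating the bad strata are independent and are realized by admissible perturbations of $G$, so that Thom's parametric transversality theorem legitimately converts transversality of the finite-dimensional path into a residual condition on the infinite-dimensional space of $\Gamma$-equivariant families. The supporting codimension computations for the corank-$1$ loci over $\C$ and, especially, over $\mathbb{H}$---where singularity is not cut out by a single real polynomial---also rest on the standard but nontrivial real-representation trichotomy and require care.
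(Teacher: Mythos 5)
Your proposal is correct, but note that the paper itself offers no proof of this theorem: it is imported verbatim from \cite[Lemma~1.31]{Golubitsky2002book} (with the underlying argument in \cite[Chapter~XIII]{Golubitsky1985-2}), so the only proof to compare against is the cited one. Your reconstruction---forcing the Jacobian path into the commuting algebra $\mathrm{End}_\Gamma(\R^m)$, splitting over isotypic components via the Schur trichotomy $\mathbb{D}_k\in\{\R,\C,\mathbb{H}\}$, counting codimensions of the corank and multiple-eigenvalue strata, and transferring genericity by equivariant linear perturbations of the $1$-jet along the trivial branch---is precisely the standard argument of that reference, with your Thom parametric-transversality framing simply a more formal packaging of its hands-on perturbation step.
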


The Lyapunov-Schmidt reduction \cite[Section~1.3]{Golubitsky2002book} ensures that we can reduce the $\Gamma$-equivariant bifurcation problem to $\ker\;A_0$ preserving $\Gamma$ equivariance and Theorem~\ref{thm:generic facts} ensures that, in doing so, the action of $\Gamma$ on $\ker\; A_0$ is absolutely irreducible.


\subsection{Consensus and dissensus bifurcations}
\label{SSEC: conse disse bifu}

Given this background we will prove that consensus and dissensus are the only generic symmetry-breaking opinion formation behaviors.

Recall that agent permutations $\ES_\Na$ acts on $\VV$ by permuting the agent axes, that is, if $\sigma\in\ES_\Na$,
\[
\sigma\Xx=(\Xx_{\sigma^{-1}(1)},\ldots,\Xx_{\sigma^{-1}(\Na)})
\]
Options permutations $\ES_\No$ acts diagonally on $\VV$, that is, if $\tau\in\ES_\No$,
\[
\tau\Xx=(\tau\Xx_1,\ldots,\tau\Xx_\Na),\quad \tau\Xx_i=(x_{i\tau^{-1}(1)},\ldots,x_{i\tau^{-1}(\No)}).
\]
Representing this action in the shifted (linearized) variables $\Zz$,  defined by~\eqref{EQ:linearized vars}, we obtain a representation of $\Gamma = \ES_\Na\times\ES_\No$ on the linear space $V=T_{\Oo}\VV\cong\R^{\No-1}\otimes\R^{\Na}$. The equivariant analysis will be developed in this linear representation.

The given action of $\ES_\No$ on each $V_i=T_{\NP_i}\Delta$ is isomorphic to the standard action of $\ES_\No$ on $\R^{\No-1}$. In particular, $\Fix_{V_i}(\ES_\No)=\{0\}$.

In the original coordinates on the simplex, this means that the fixed point set of the option symmetries in the option space $\Delta$ of each agent $i$ is exactly the neutral point \eqref{e:NP}. Because $\ES_\No$ acts diagonally, the fixed-point subset of $\ONE\times S_\No\subset\Gamma$ in $V$ is $\Fix_V(\ONE\times S_\No)= \{0\}$. It follows that
\begin{equation} \label{e:Gamma fix}
\Fix_V(\Gamma)= \{0\}.
\end{equation}

Our model has symmetry $\Gamma$ in the following sense.
The opinion formation dynamics~\eqref{e:symm bifu dyna} maps in an affine way to a opinion formation dynamics on $V$ defined by 
\begin{equation}\label{EQ:tangent dynamics}
\dot\Zz=G(\Zz,\lambda),
\end{equation}
where $G:V\times\R\to V$\footnote{Note that, with a small abuse of notation, we are denoting the vector field on the tangent space $V$ with the same letter as the vector filed on the base space $\VV$.}. Our symmetry assumption is formalized by requiring that $G$ is $\Gamma$-equivariant.  We study the associated equivariant bifurcation problem
\begin{equation}\label{EQ:symm bifu prob}
G(\Zz,\lambda)=0.
\end{equation}
Invoking~\eqref{e:Gamma fix} and \cite [Theorem~1.17]{Golubitsky2002book}, it follows that the origin is a solution of the bifurcation problem~\eqref{EQ:symm bifu prob} for all~$\lambda$, that is,
\[
G(0,\lambda) \equiv 0.
\]
We call the origin the {\it trivial equilibrium}. Note that in the original opinion formation coordinates $\Xx$ the trivial equilibrium is exactly the neutral point $\Oo$. Our goal is to study symmetry-breaking from the trivial equilibrium. In doing so, we mainly rely on the Equivariant Branching Lemma~\cite[Lemma~1.31]{Golubitsky2002book},\cite[Theorem~3.3]{Golubitsky1985-2}.

The first step in applying the Equivariant Branching Lemma is to decompose the state space into the direct sum of {\it irreducible representations}. A group representation on a given vector space is irreducible if the only invariant subspace with respect to the group action is the origin (see~\cite[Section~XII.2]{Golubitsky1985-2},\cite[Definition~1.21]{Golubitsky2002book}). In our case, we can write
\[
V = W_c\oplus W_d
\]
where $W_c$ and $W_d$ are the consensus and dissensus space defined in~\eqref{e:agreement space} and~\eqref{e:disagreement space}, respectively. It is easy to verify that both $W_c$ and $W_d$ are irreducible representations of $\Gamma$. Moreover $\ES_\Na$ acts trivially on $W_c$ but faithfully on $W_d$, whereas $\ES_\No$ acts faithfully on both spaces. It follows that the actions of $\Gamma$ on $W_c$ and $W_d$ are not isomorphic. Because $W_c\oplus W_d=V$, \cite[Corollary XII.2.6(a)]{Golubitsky1985-2} implies that no other $\Gamma$-irreducible representations exist.  Absolute irreducibility implies that there is a trivial equilibrium for every equivariant bifurcation problem.

Going back to symmetry breaking in decision making, we can conclude that, generically, there are two types of symmetry breaking bifurcations of~\eqref{e:symm bifu dyna}.
\begin{theorem}\label{THM: consensus or dissensus}
	Suppose that~\eqref{e:symm bifu dyna} is $\Gamma$-equivariant. Then generically symmetry breaking either happens along the consensus space (which corresponds to $\ker A_0=W_c$) or along the dissensus space (which corresponds to $\ker A_0=W_d$).
\end{theorem}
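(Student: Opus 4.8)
The plan is to reduce the statement to the representation theory of the linearization at the bifurcation point and then invoke the generic facts of Theorem~\ref{thm:generic facts}. First I would set $\lambda_0 = 0$ and recall from \eqref{e:Gamma fix} that $G(0,\lambda) \equiv 0$, so that the origin is a trivial equilibrium for all $\lambda$ and $A_0 := (dG)_{(0,0)}$ is well defined. Differentiating the equivariance identity \eqref{EQ:decision gamma equivariance} at the origin shows that $A_0$ commutes with the $\Gamma$-action on $V$.

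The next step is purely algebraic. Using the decomposition $V = W_c \oplus W_d$ established just before the statement, I would first upgrade irreducibility to \emph{absolute} irreducibility: the standard action of $\ES_\No$ on $\R^{\No-1}$ is absolutely irreducible, and an outer tensor product of absolutely irreducible representations of the two factors of $\Gamma = \ES_\Na \times \ES_\No$ (one of which may be the trivial one-dimensional representation) is again absolutely irreducible. This covers both $W_c \cong \R^{\No-1}$ and $W_d \cong \R^{\No-1} \otimes \R^{\Na-1}$. Since $W_c$ and $W_d$ are non-isomorphic and each occurs with multiplicity one, Schur's lemma forces $A_0$ to be block diagonal with scalar blocks, $A_0 = c_c(\lambda) I_{W_c} \oplus c_d(\lambda) I_{W_d}$ for smooth scalar functions $c_c, c_d$; in particular the only $\Gamma$-invariant, $\Gamma$-irreducible subspaces of $V$ are $W_c$ and $W_d$ themselves.

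Finally I would carry out the genericity step. A steady-state bifurcation at $\lambda = 0$ requires $A_0$ singular, i.e.\ $c_c(0) = 0$ or $c_d(0) = 0$. Imposing both equalities simultaneously amounts to two independent scalar conditions and is therefore a codimension-two, non-generic event---precisely the mode-interaction degeneracy noted after Theorem~\ref{THM:cooperation competition}. Hence generically exactly one of $c_c(0), c_d(0)$ vanishes, so that $\ker A_0$ equals either $W_c$ (consensus) or $W_d$ (dissensus). Equivalently, one may simply invoke Theorem~\ref{thm:generic facts}(c): generically $\Gamma$ acts absolutely irreducibly on $\ker A_0$, and since the only such subspaces of $V$ are $W_c$ and $W_d$, the kernel must be one of these two.

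The step I expect to be the main obstacle is conceptual rather than computational: making precise the word ``generically'', i.e.\ arguing that the simultaneous vanishing $c_c(0) = c_d(0) = 0$ (mode interaction) is avoided on an open dense set of $\Gamma$-equivariant bifurcation problems. I would dispose of this cleanly by appealing to Theorem~\ref{thm:generic facts}, which already packages the required transversality, rather than reproving it by a direct perturbation argument.
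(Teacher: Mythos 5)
Your proposal is correct and takes essentially the same route as the paper: the paper's argument is exactly the decomposition $V = W_c \oplus W_d$ into the only two (non-isomorphic, absolutely) irreducible representations of $\Gamma$, followed by the generic fact of Theorem~\ref{thm:generic facts}(c) that $\ker A_0$ is absolutely irreducible and hence equals $W_c$ or $W_d$. Your extra details---the Schur-type block diagonalization $A_0 = c_c(\lambda) I_{W_c} \oplus c_d(\lambda) I_{W_d}$ and the codimension-two exclusion of the mode interaction $c_c(0)=c_d(0)=0$---match what the paper does in the proof of Theorem~\ref{THM:cooperation competition} and in the remark on mode interaction, so they are a faithful elaboration rather than a different argument.
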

Note that, generically, the two types do not occur together, that is, either one type of bifurcation is observed or the other. The analysis in Section~\ref{SSEC: balance determines} determines which type occurs. To determine the structure of the bifurcating branches appearing inside either mode, we can apply the Equivariant Branching Lemma.

\begin{remark}\label{RMK: conse disse general}
	The result of Theorem~\ref{THM: consensus or dissensus} generalizes to the case in which  $\Gamma=\Gamma_a\times\Gamma_o$, with $\Gamma_a\subset\ES_{\Na}$ and $\Gamma_o\subset\ES_{\No}$ such that $\Gamma_a$ acts transitively on $\{1,\ldots,\Na\}$.  Transitivity of the agent factor of the symmetry group $\Gamma_a$ means that all agents can be sent into all other agents by the symmetry group action. In other words, transitivity generalizes the condition that all agents are equivalent by asking that any agent can be sent into any other agent by the symmetry group action, although it might not be possible to do so by simply swapping arbitrary pairs of agents as in the fully symmetric case. This means that there are no apriori defined clusters of distinguishable agents. As a relevant example, consider the case in which $\Gamma_a=\D_\Na$, the dihedral group of order $\Na$. Geometrically, this means that the agent and their interactions are organized as a regular polygon with $\Na$ vertices. $\D_\Na$ is transitive and indeed it is possible to send any agent into any other agent, for instance, by cycling them forward. Agents are equal. However, if $\Na>3$, $\D_\Na$ does not contain all simple permutations $(i\ j)$, $i,j\in\{1,\ldots,\Na\}$, $i\neq j$, so not every pair of agents can be swapped while leaving the opinion formation dynamics unchanged. An example of a non-transitive $\Gamma_a$ is $\Gamma_a=\ES_{n_1}\times\ES_{n_2}$, $n_1+n_2=\Na$, corresponding to two clusters of agents, with the $n_1$ agents in the first cluster distinguished from the $n_2$ agents in the second cluster. Agents in the first clusters cannot be sent to agents in the second cluster by the symmetry group action, reflecting the distinguished nature of the two groups. A concrete situation in which such symmetry might arise is when the first cluster of agents is influenced by the opinion of the agents in the second cluster, but not vice-versa, a lack of symmetry in the interaction topology which makes the two groups distinguished.

	We now prove the statement of this remark. By~\cite[Proposition~2.4]{Dionne1996}, absolutely irreducible representations of the action of $\Gamma$ on $V$ are of the form $W_i^{\rm a}\otimes W_j^{\rm o}$, with $W_i^{\rm a}\subset \R^{\Na}$, $W_1^{\rm a}\oplus\cdots\oplus W_s^{\rm a}=\R^\Na$, absolutely irreducible representations of $\Gamma_a$ on $\R^\Na$, and $W_i^{\rm o}\subset V_{\No}$, $W_1^{\rm a}\oplus\cdots\oplus W_r^{\rm a}=V_\No$, absolutely irreducible representations of $\Gamma_o$ on $V_\No$. The action of $\Gamma_a$ on $\R^\Na$ always admits a trivial irreducible representation on $W^{\rm a}_1=\R\{(1,\ldots,1)\}$. Because $\Gamma_a$ is transitive, no other trivial representations of $\Gamma_a$ on $\R^{\Na}$ exist and, in particular, $W^{\rm a}_j$ is not $\Gamma_a$-isomorphic to $W^{\rm a}_1$ for any $j>1$. Furthermore, $V_\Na$ is $\Gamma_a$-invariant, so all other isotypic components (and thus all other irreducible representations) of $\Gamma_a$ are contained in $V_\Na$. It follows that when $\Gamma_a$ is transitive on $\{1,\ldots,\Na\}$ the absolutely irreducible representations of $\Gamma$ appear in two types: {\it consensus irreducible representations}, $W^{\rm o}_1,\ldots,W^{\rm o}_r\subset W_c$, and {\it dissensus irreducible representations} $W^{\rm a}_i\otimes W^{\rm o}_1,\ldots, W^{\rm a}_i\otimes W^{\rm o}_\No\subset W_d$, $i=2,\ldots,s$. The generalization of Theorem~\ref{THM: consensus or dissensus} follows easily from isotypic components techniques~\cite[Section~2.2]{Golubitsky2002book},\cite[Section~XII.2]{Golubitsky1985-2}.
\end{remark}

In the following, we only develop the equivariant analysis for the fully symmetric case $\Gamma=\ES_{\Na}\times\ES_{\No}$. Axial subgroups and, thus, the structure of consensus and dissensus bifurcation branches for lower-symmetry cases can be similarly constructed.

\section{Opinion formation for $\Na=2$ or $\No=2$}
\label{SEC: examples}

We now apply the Equivariant Branching Lemma to various opinion-forming problems with either $\Na=2$ or $\No=2$. All of the relevant equivariant bifurcation results used in this section were previously derived~\cite{Aronson1991},\cite[Section~XIII.5]{Golubitsky1985-2},\cite[Chapter X]{Golubitsky1985-2}. We interpret them in terms of opinion formation. In the remainder of the paper, we set the opinion threshold $\vartheta=0$. With this choice, the only unopinionated state of an agent $i$ is exactly its neutral point $\Oo_i$ and an agent is conflicted between two or more options when her opinion about them is exactly the same. This choice allows a precise mapping between the theoretical results and their interpretation in terms of opinion formation. A positive $\vartheta$ can always be plugged back to make the interpretation robust to heterogeneities and other small perturbations to the exact symmetry assumption, as it was done for the numerical illustrations in Figures~\ref{FIG: coop compe},\ref{FIG: moderate extremist},\ref{FIG: switch-like}. Tables~\ref{TAB: consensus data} and~\ref{TAB: dissensus data} summarize the algebraic data underlying the analysis of consensus and dissensus opinion formation developed in this section. The case $\Na=\No=2$ is analyzed in in~\cite{Bizyaeva2020} in terms of two clusters of agents deciding over two options. It won't be considered here.

\subsection{Three agents and two options}
\label{SSEC: three two}

When $\Na=3$ and $\No=2$ the state space is isomorphic to $\R^3$. Each agent state-space is isomorphic to the real line with positive and negative values associated to preferring either of the options. The consensus space is thus one-dimensional and inside this space the symmetry group action reduces to the standard action of $\Z_2$ in $\R$, as defined in Table~\ref{TAB: consensus data}. Consensus opinion formation is therefore organized by the generic $\Z_2$-equivariant bifurcation, the pitchfork (Figure~\ref{FIG: 3 agents 2 options}a1). The two bifurcating branches correspond to the three agent agreeing on one of the two options (Figure~\ref{FIG: 3 agents 2 options}a2).

\begin{figure}[t!]
	\centering
	\includegraphics[width=0.75\textwidth]{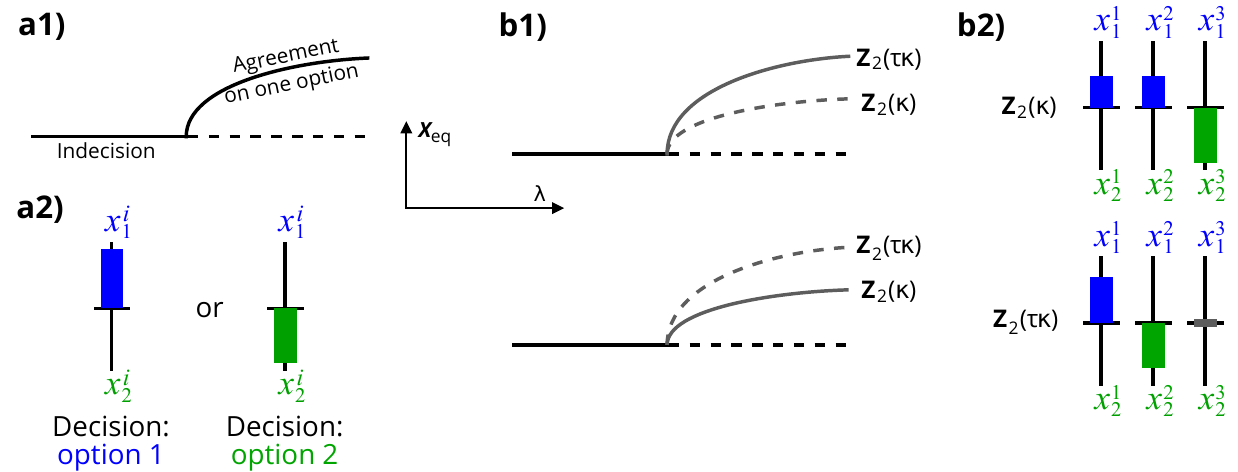}
	\caption{Consensus (a) and dissensus (b) opinion formation between $3$ agents and $2$ options. {\bf a1,b1)} Equivariant bifurcation diagrams. Dashed branches are unstable, full branches are stable. The symmetry group of each branch (i.e., its associated axial subgroup) is indicated, unless trivial. {\bf a2,b2)} Opinion formation interpretation of the predicted non-trivial bifurcation branches.}\label{FIG: 3 agents 2 options}
\end{figure}

The dissensus space is isomorphic to the subspace $V_3\subset\R^3$ where the coordinate sum is zero, as defined in~\eqref{EQ: zero sum subspace}, and it is isomorphic to $\R^2$. The action of the symmetry group in this subspace is the standard action of $\ES_3\times \Z_2\cong \D_6$ on $\R^2$, as defined in Table~\ref{TAB: dissensus data}.
In this representation, there are two conjugacy classes of axial subgroups~\cite[Section~XIII.5]{Golubitsky1985-2} and associated conjugacy class of bifurcation branches (Figure~\ref{FIG: 3 agents 2 options}b1): $\Z_2(\kappa)$ and $\Z_2(\tau\kappa)$. The associated conjugacy classes of one-dimensional fixed point subspaces are
\begin{subequations}\label{EQ: 3a 2o dis fixed}
	\begin{align}
	\Fix(\Z_2(\kappa))&=\{\Zz_1=\Zz_2=(c,-c),\ \Zz_3=-2\Zz_1,\ c\in\R\}\\
	\Fix(\Z_2(\tau\kappa))&=\{\Zz_3=0,\ \Zz_1=-\Zz_2=(c,-c),\ c\in\R \}\,.
	\end{align}
\end{subequations}
The  class $\Z_2(\kappa)$ describes the situation in which two agents agree on one of the two options and the third agent strongly disagree by preferring the other option (Figure~\ref{FIG: 3 agents 2 options}b2, top). It is a moderate/extremist situation. The class $\Z_2(\tau\kappa)$ describes the situation in which one agent is neutral, while the other two agents disagree by preferring different options (Figure~\ref{FIG: 3 agents 2 options}b2, bottom). It is a homogeneous dissensus situation. Depending on second-order terms, the bifurcating branch of either type of conjugacy classes can be stable, but not both at the same time, as illustrated in Figure~\ref{FIG: 3 agents 2 options}b1.

\subsection{Two agents and three options}
\label{SSEC: two three}

When $\Na=2$ and $\No=3$ the state space is isomorphic to $\R^4$. Each agent state space is the two-simplex in $\R^3$, which is locally isomorphic to its tangent space $V_3\cong\R^2$. The consensus subspace is thus isomorphic to $\R^2$ and inside this space the symmetry group action reduces to the standard action of $\ES_3\cong \D_3$ on $\R^2$, generated in our representation by option permutations as defined in Table~\ref{TAB: consensus data}. Figure~\ref{FIG: 2 agents 3 options}a1 reproduces a generic $\D_3$-equivariant diagram from \cite[Figure~XV.4.2]{Golubitsky1985-2} and its two interpretations in terms of consensus opinion formation. A similar interpretation can be worked out for the other generic $\D_3$-equivariant diagram in \cite[Figure~XV.4.2]{Golubitsky1985-2}. There is one conjugacy class of axial subgroups, that is, $\Z_2(\kappa)$. The associated conjugacy class of fixed point subspace is
\begin{equation}\label{EQ: 2a 3o cons fixed}
\Fix(\Z_2(\kappa))=\{\ z_{11}=z_{21}=z_{12}=z_{22}=c,\ z_{13}=z_{23}=-2c\ c\in\R\}.
\end{equation}
The bifurcating $\Z_2(\kappa)$ branch is transcritical, i.e., it is made of one subcritical half-branch before the bifurcation point and one supercritical half-branch after the symmetric bifurcation point. Both branches are unstable at the bifurcation point. Thus, as discussed in Section~\ref{ssec: switch-likeness}, consensus opinion formation over three options is switch-like. Equivariant singularity theory~\cite[Section~XV.4]{Golubitsky1985-2} predicts that the subcritical branch bends in a fold bifurcation away from the symmetric bifurcation point at which it becomes stable. We expect the opinion state to jump toward this stable branch at crossing the $\ES_3$-equivariant singularity. The two bifurcation branches describe two different opinion configurations: favoring one option (Figure~\ref{FIG: 2 agents 3 options}a2, right) or to being conflicted between two options (Figure~\ref{FIG: 2 agents 3 options}a2, left).
The stable opinion configuration to which the opinion state jumps at crossing the $\ES_3$-equivariant singularity can correspond to both of them, which leads to two possible opinion formation interpretations of this axial.


\begin{remark}\label{RMK: S3 secondary}
$\ES_3$-equivariant singularity theory further predicts the existence of {\it secondary bifurcations} at which the stable opinion configuration loses stability. {\it In terms of opinion formation, these secondary bifurcations are the mechanisms through which an opinionated conflicted state can transition to a state where only one option is favored.} In the three-option consensus case considered here, the stable conflicted state in Figure~\ref{FIG: 2 agents 3 options}a2, left, can loose stability in a secondary pitchfork bifurcation that can either be supercritical or subcritical. In the former (latter) case, the opinion state continuously changes (switches) to favoring one of the two options between which the group was conflicted.
\end{remark}


\begin{figure}
	\centering
	\includegraphics[width=\textwidth]{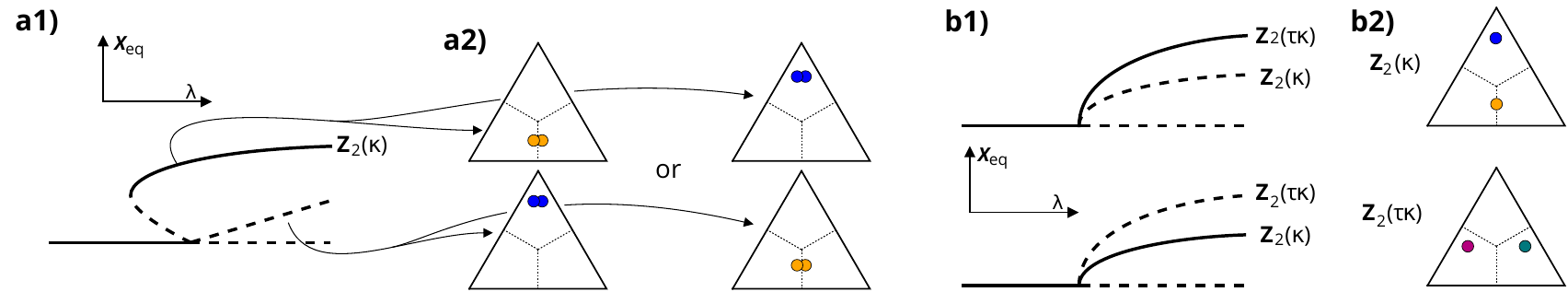}
	\caption{Consensus (a) and dissensus (b) opinion formation between $2$ agents and $3$ options. {\bf a1,b1)} Equivariant bifurcation diagrams. Dashed branches are unstable, full branches are stable. The symmetry group of each branch (i.e., its associated axial subgroup) is indicated, unless trivial. {\bf a2,b2)} Opinion formation interpretation of the predicted non-trivial bifurcation branches.}\label{FIG: 2 agents 3 options}
\end{figure}

The dissensus subspace $W_d=V_2\otimes V_3\cong\R\otimes V_3=V_3\cong\R^2$. Indeed, each agent state space is $V_3\cong\R^2$ and $\Zz_1+\Zz_2=0$ inside the dissensus space. The symmetry group action in this subspace is the standard action of $\Z_2\times S_3 \cong \D_6$ on $\R^2$, as defined in Table~\ref{TAB: dissensus data}.
Note that the symmetry group action is isomorphic to the case $\Na=3$, $\No=2$. The conjugacy classes of axial subgroups and associated bifurcation branches are therefore the same (Figure~\ref{FIG: 2 agents 3 options}b1). However, the conjugacy classes of fixed point subspaces and their interpretation are different (Figure~\ref{FIG: 2 agents 3 options}b2). They are now given by
\begin{subequations}\label{EQ: 2a 3o dis fixed}
	\begin{align}
	\Fix(\Z_2(\kappa))&=\{z_{11}=z_{12}=-z_{21}=-z_{22}=c,\ z_{13}=-z_{23}=-2c,\ c\in\R\}\\
	\Fix(\Z_2(\tau\kappa))&=\{z_{13}=z_{23}=0,\ z_{11}=-z_{12}=-z_{21}=z_{22}=c,\ c\in\R\}.
	\end{align}
\end{subequations}
The conjugacy class $\Fix(\Z_2(\kappa))$ describes the situation in which one agent favors one of the three options and the other agent remains conflicted between the other two options. The conjugacy class $\Fix(\Z_2(\tau\kappa))$ describes the situation in which both agents remains neutral about one of the three options and each of them favors a different one between the remaining two options. Depending on second-order terms, both types of branches can be stable, but not both of them at the same time.

\subsection{Consensus opinion formation for an arbitrary number of agents and arbitrary number of options}

For generic $\Na=n$ and $\No=k$ the consensus subspace is isomorphic to $\R^{k-1}$ and the action of $\Gamma$ on this subspace is the standard action of $\ES_k$ on $V_k\cong\R^{k-1}$, as defined in Table~\ref{TAB: consensus data}. There are $\lfloor\frac{k}{2}\rfloor$ conjugacy classes of axial subgroups
\begin{equation}\label{EQ: generic consensus axial}
\Sigma_p=\ES_p\times \ES_{k-p},\quad p\leq \left\lfloor\frac{k}{2}\right\rfloor,
\end{equation}
where the first factor permutes the first $p$ options and the second factor permutes the last $k-p$ options. The associated conjugacy classes of fixed points subspaces are
\begin{equation}\label{EQ: na ko cons fixed}
\Fix(\Sigma_p)=\left\{c(\underbrace{v,\ldots,v}_{n\text{ times}}),\ v=\left(\overbrace{{\textstyle\frac{k-p}{p}},\ldots,{\textstyle\frac{k-p}{p}}}^{p\text{ times}},\overbrace{-1,\ldots,-1}^{k-p\text{ times}}\right),\ c\in\R\right\}.
\end{equation}
For $z>0$, this conjugacy class $\Fix(\Sigma_p)$ corresponds to the situations in which the agents favor the first $p$ options over the last $k-p$ options. Furthermore, the preference $\frac{k-p}{p}$ that the agent assign to the favored options increases as $p$ decreases, i.e., the smaller is the number of favored options the sharper is the way in which the agents favor them.

\begin{remark}
As in the three option consensus case discussed above (see Remark~\ref{RMK: S3 secondary}), when $p\geq 2$, we expect secondary bifurcations to sequentially lead to smaller and smaller sets of favorite options, eventually reaching consensus on a single option.	
\end{remark}

\subsection{Dissensus opinion formation for $\Na=2$ or $\No=2$}

When either $\Na=2$ and $\No=n\geq 2$, or $\No=2$ and $\Na=n\geq 2$, the action of $\Gamma$ on the dissensus space $W_d=V_n\cong\R^{n-1}$ is the action of $\ES_n\times\Z_2$ on $V_n\cong\R^{n-1}$, as defined in Table~\ref{TAB: dissensus data}. In both cases, the non-trivial element of $\Z_2$, corresponding to agent swapping if $\Na=2$ or to option swapping if $\No=2$, can be represented as minus the identity. The abstract equivariant analysis is thus the same for the two cases and can be found in \cite[Corollary~3.2]{Aronson1991}. We summarize here the relevant results. Of course, their interpretation will depend on whether $\Na=2$ or $\No=2$.

Partition $n$ into $3$ blocks, in such a way that the first two blocks posses $1\leq l\leq \frac{n}{2}$ elements each and the last block possesses $n-2l$ elements. Let $\tilde\rho_l\in\ES_n$ swap the first two blocks of coordinates and define $\rho_l=(\tilde\rho_l,-I)\in \ES_n\times\Z_2$.

\begin{theorem}\label{THM: Aronson}
	The conjugacy classes of axial subgroups of  $S_n\times\Z_2$ acting on $V_n$ and associated conjugacy classes of one-dimensional fixed point subspaces are:
	\begin{itemize}
		\item[\it I)]
		\begin{equation}\label{EQ: n 2 diss modext axial}
		\Sigma_k=\ES_k\times \ES_{n-k},
		\end{equation} 
		$1\leq k<\frac{n}{2}$, where the first factor permutes the first $k$ coordinates and the last factor permutes the last $n-k$ coordinates, with conjugacy class of fixed point subspaces
		\begin{equation}\label{EQ: n 2 diss modext axial fix}
		\Fix(\Sigma_k)=\R\Bigg\{\bigg(\overbrace{1,\ldots,1}^{k\text{ times}},\overbrace{{\textstyle\frac{k}{k-n}},\ldots,{\textstyle\frac{k}{k-n}}}^{n-k\text{ times}}\bigg)\Bigg\}.
		\end{equation}
		\item[\it II)]
		\begin{equation}\label{EQ: n 2 diss uniform axial}
		T_l=\ES_l\times \ES_l\times \ES_{n-2l}\times\Z_2(\rho_l)
		\end{equation}
		where $1\leq l\leq \frac{n}{2}$, with conjugacy class of fixed point subspaces
		\begin{equation}\label{EQ: n 2 diss uniform axial fix}
		\Fix(T_l)=\R\Bigg\{\bigg(\underbrace{1,\ldots,1}_{l\text{ times}},\underbrace{-1,\ldots,-1}_{l\text{ times}},\underbrace{0,\ldots,0}_{n-2l\text{ times}}\bigg)\Bigg\}
		\end{equation}
	\end{itemize}
	Generically, only the bifurcation branches corresponding to the axial $\Sigma_k$, with $n/3<k<n/2$ are stable at bifurcation.
\end{theorem}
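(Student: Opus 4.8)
The statement has two parts --- the enumeration of the axial subgroups together with their fixed-point lines, and the stability assertion --- and I would treat them separately.

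\textbf{Classification of the axials.} Every isotropy subgroup is $\Sigma_v$ for some $v=(v_1,\dots,v_n)\in V_n$, so the plan is to read off $\dim\Fix(\Sigma_v)$ directly from the ``value structure'' of $v$ and then impose $\dim\Fix(\Sigma_v)=1$. An element $(\sigma,+I)$ fixes $v$ iff $\sigma$ preserves each level set $\{i:v_i=c\}$, while $(\sigma,-I)$ fixes $v$ iff $\sigma$ carries the level set of $c$ onto that of $-c$; hence the $-I$ part of $\Sigma_v$ is nontrivial precisely when the multiset of values of $v$ is symmetric under negation. In the non-symmetric case $\Sigma_v=\prod_c\ES_{m_c}$ (one symmetric group per level set), and $\Fix(\Sigma_v)$ consists of the vectors constant on each level set and summing to zero, so $\dim\Fix(\Sigma_v)=(\#\text{distinct values})-1$. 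In the negation-symmetric case the extra $-I$ elements force the zero-block coordinate to vanish and pair the $\pm c$ blocks with opposite constants, giving $\dim\Fix(\Sigma_v)=\#\{\text{distinct pairs }\pm c\neq 0\}$.

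Imposing $\dim\Fix=1$ then yields exactly the two families. The non-symmetric case forces $v$ to take exactly two nonzero values, with multiplicities $k$ and $n-k$; the zero-sum constraint fixes their ratio and, after normalization, produces the vector in~\eqref{EQ: n 2 diss modext axial fix} with isotropy $\Sigma_k=\ES_k\times\ES_{n-k}$. A two-value vector is negation-symmetric only when $k=n/2$, which is exactly why Type~I is restricted to $1\le k<n/2$ and the borderline $k=n/2$ reappears inside Type~II. The negation-symmetric case forces a single pair $\pm c$ (multiplicity $l$ each) together with $n-2l$ zeros, i.e.\ the vector in~\eqref{EQ: n 2 diss uniform axial fix}, with isotropy $T_l=\ES_l\times\ES_l\times\ES_{n-2l}\times\Z_2(\rho_l)$. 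Finally I would verify that $\Sigma_k$ and $\Sigma_{n-k}$ are conjugate (via the block-swapping permutation, using that the two slopes $b,b'$ satisfy $b\,b'=1$), so that $1\le k<n/2$ and $1\le l\le n/2$ index distinct conjugacy classes without repetition; this bookkeeping is the only delicate point of this part.

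\textbf{Stability.} Since the nontrivial element of $\Z_2$ acts as $-I$, every equivariant $G$ is odd, so there are no quadratic equivariants and the cubic truncation of the normal form is the two-parameter family
\begin{equation*}
G(z,\lambda)=c(\lambda)z+A\,\pi(z^{(3)})+B\,\|z\|^2 z,\qquad z^{(3)}=(z_1^3,\dots,z_n^3),
\end{equation*}
with $\pi$ the orthogonal projection onto $V_n$ and $c(0)=0$, $c'(0)\neq0$. This is a gradient field, which guarantees real eigenvalues and simplifies the analysis. For each axial I would set $z=t\,u$ with $u$ the generator of the fixed line, use $\pi(u^{(3)})=\mu\,u$ (valid since $\Fix$ is one-dimensional and invariant) to solve for the branch $\lambda=-(A\mu+B\|u\|^2)t^2$, and then compute $dG$ at $z=tu$ on the isotypic decomposition of $V_n$ under the axial subgroup. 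On each isotypic component $dG$ acts as a scalar, and substituting the branch relation turns that eigenvalue into $t^2$ times a coefficient whose sign decides stability.

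The decisive computation is on the transverse components of $\Sigma_k$: writing $b=k/(k-n)$ and $\mu_k=1+b+b^2$, the two transverse eigenvalues are proportional to $A(2-b-b^2)$ and to $A(2b^2-b-1)=A(2b+1)(b-1)$. Since $b\in(-1,0)$ the first factor is always positive, forcing $A<0$ for stability, and the second is then negative exactly when $2b+1<0$, i.e.\ $b<-\tfrac12$, i.e.\ $k>n/3$; combined with the domain $k<n/2$ this gives the window in the statement. To eliminate Type~II I would exhibit, for each $T_l$ with a nonempty zero block, a transverse eigenvalue proportional to $-A$ (arising from the $\ES_{n-2l}$ block on which $z$ vanishes), which is positive precisely in the regime $A<0$ where Type~I is stable. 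The main obstacle is exactly this eigenvalue bookkeeping: carrying out the isotypic decompositions for both families, tracking multiplicities and the effect of the swap $\rho_l$, and reducing stability to the two sign conditions above. The borderline configurations --- a zero block of size one, and $l=n/2$ for $n$ even --- are the subtle ones and require separate treatment; they are where the word ``generically'' carries the weight, and where I would lean on the detailed equivariant computations of~\cite{Aronson1991}.
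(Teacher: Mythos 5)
Your classification half is correct, and it is worth noting that the paper does not actually prove this theorem itself --- it quotes it from \cite[Corollary~3.2]{Aronson1991} --- so a self-contained argument is genuinely additive. Reducing to isotropy subgroups $\Sigma_v$, reading $\dim\Fix(\Sigma_v)$ off the level-set structure of $v$ (with the negation-symmetric dichotomy deciding whether a $(\sigma,-I)$ element can occur), and the bookkeeping that $\Sigma_k$ and $\Sigma_{n-k}$ are conjugate because the slopes satisfy $b\,b'=1$, are all sound and reproduce exactly the two families with the stated fixed lines, including why $k=n/2$ migrates into Type~II. Your $\Sigma_k$ stability computation also checks out: with $b=k/(k-n)\in(-1,0)$ one gets $\mu=1+b+b^2$, transverse eigenvalues $t^2A(2-b-b^2)$ with $2-b-b^2=(2+b)(1-b)>0$, and $t^2A(2b+1)(b-1)$, forcing $A<0$ and $b<-\tfrac12$, i.e.\ the window $n/3<k<n/2$ together with supercriticality for the radial direction.

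The genuine gap is in the elimination of Type~II, and it is not the routine borderline check you suggest. First, the $-At^2$ eigenvalue from the zero block only rules out $T_l$ in the regime $A<0$; to show $T_l$ is \emph{never} stable you also need the complementary eigenvalue $2At^2$ carried by the within-block sum-zero directions (present when $l\geq 2$), and for $l=1$ the block-constant transverse direction, whose eigenvalue on the branch is $\tfrac{2(n-3)}{n}At^2$ --- this part is easily patched. Second, and fatally for the plan, the case $l=n/2$ cannot be closed by your method: there $u=(1,\ldots,1,-1,\ldots,-1)$ has $u_i^2\equiv 1$ and an empty zero block, so on the branch \emph{every} transverse eigenvalue of the cubic truncation equals
\[
c+t^2\left(3A+B\|u\|^2\right)=2At^2,
\]
with no eigenvalue of opposite sign available. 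For $A<0$ and a supercritical branch the cubic truncation therefore makes $T_{n/2}$ linearly stable, and since these eigenvalues are nonzero and hyperbolic, higher-order terms cannot reverse their signs for small $t$; your approach thus cannot establish the claimed instability of $T_{n/2}$ --- if anything it yields the opposite on an open set of coefficients. Leaning on \cite{Aronson1991} does not rescue this, because the stability computations there concern period-doubled orbits of $\ES_n$-equivariant \emph{maps}, where signs are read off $d(f^2)=df(f(x))\,df(x)$, and these do not transfer verbatim to equilibria of flows. A related caveat: your setup silently assumes $\pi(z^{(3)})$ and $\|z\|^2z$ are independent, but for $n=3$ they coincide (on $V_3$, $z_i^3=-e_2z_i+e_3$ gives $\pi(z^{(3)})=\tfrac12\|z\|^2z$), the cubic truncation is $O(2)$-degenerate, and your decisive factor $(2b+1)$ vanishes at $b=-\tfrac12$ --- consistent, but confirming that the boundary and $l=n/2$ cases require a genuinely different argument, not just careful bookkeeping within the cubic normal form.
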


Let us interpret Theorem~\ref{THM: Aronson} in terms of opinion formation. When $\Na=2$ and $\No=n$, the fixed point subspace $\Fix(\Sigma_k)$ describes the situation in which one agent has opinion $\frac{1}{\No}+c_1$, $c_1\in\R$ about the first $k$ options and opinion $1/\No-c_2$, $c_2=\frac{c_1k}{n-k}$ about the last $\No-k$ options, while the other agent has opinion $\frac{1}{\No}-c_1$ about first $k$ options and opinion $1/\No+c_2$ about the last $\No-k$ options (Figure~\ref{FIG: 2 options disagreement}). Suppose for the definiteness that $c_1>0$.  The reader can easily work out the interpretation for $c_1<0$. Observe that, for $1\leq k<\frac{n}{2}$, $c_1>c_2$. Then the first agent has a strong preference for the first $k$ options while the second agent has a weak preference for the last $\No-k$ options.
The first agent behaves with sureness in situations in which agent decisions are mutually exclusive by giving all its vote to a small number of options and strongly securing them. The second agent behaves more insecurely, by securing all the remaining options, but with weaker preference. When $\No=2$ and $\Na=n$, the fixed point subspace $\Fix(\Sigma_k)$ describes the situation in which a small group of $k$ agents (with opinion $1/2+c_1$) strongly favor the first option and a larger group of $\Na-k$ agents (with opinion $1/2-c_2$) weakly favor the second option. The elements of the small group can be considered as the extremists, who manage to avoid consensus for the other option, favored by a larger number of agents, by developing a strong preference for their favorite option. The elements of the large group can be considered as the moderates, who do not develop a strong preference but rely on their large number to try to achieve consensus. This decision behavior is reminiscent of that observed in schooling fishes~\cite{Couzin2011}.
\begin{figure}[h!]
	\centering
	\includegraphics[width=\textwidth]{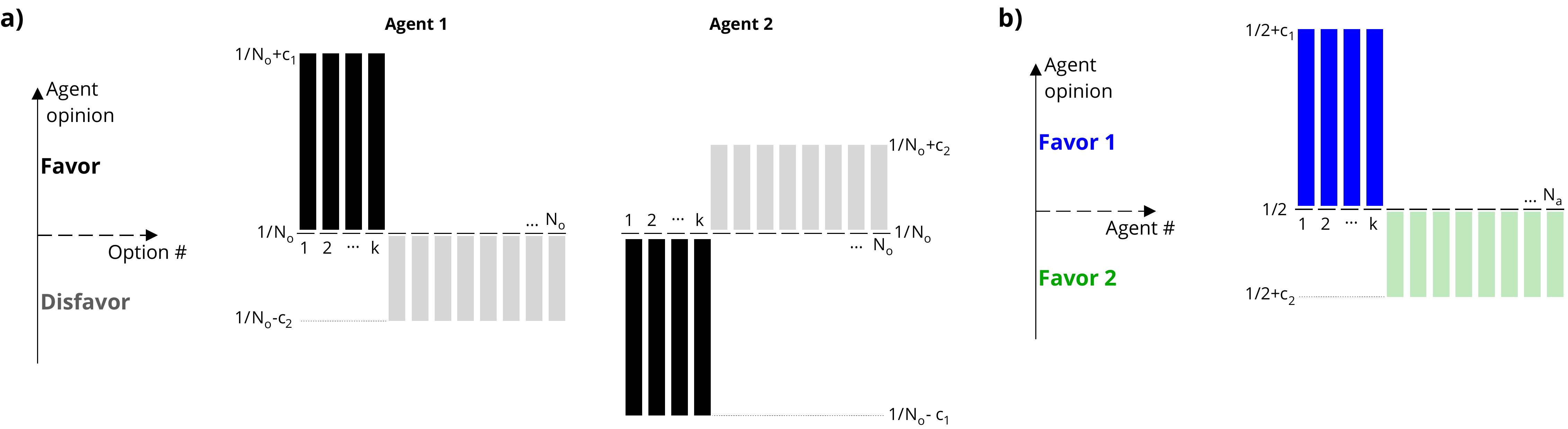}
	\caption{Interpretations of the $S_n\times\Z_2$ axial $\Sigma_k$ in terms of dissensus opinion formation. {\bf a)} $\Na=2$. {\bf b)} $\No=2$.}\label{FIG: 2 options disagreement}
\end{figure}

A similar interpretation can be worked out for the other axial. When $\Na=2$ and $\No=n$, the fixed point subspace $\Fix(T_l)$ describes the situation in which the two agents are neutral about the last $\No-2l$ options and conflicted, with exactly opposite opinions, about the first $2l$ options. In particular, the first agent favors the first $l$ options with opinion $1/\No+c$ and disfavors the second $l$ options with opinion $1/\No-c$, and viceversa for the second agent. When $\No=2$ and $\Na=n$, the first $l$ agents equally favor the first option with opinion $1/2+c$, the second $l$ agents equally favor the second option with opinion $1/2-c$, and the last $\Na-2l$ agents form an unopinionated group.
\begin{figure}[h!]
	\centering
	\includegraphics[width=\textwidth]{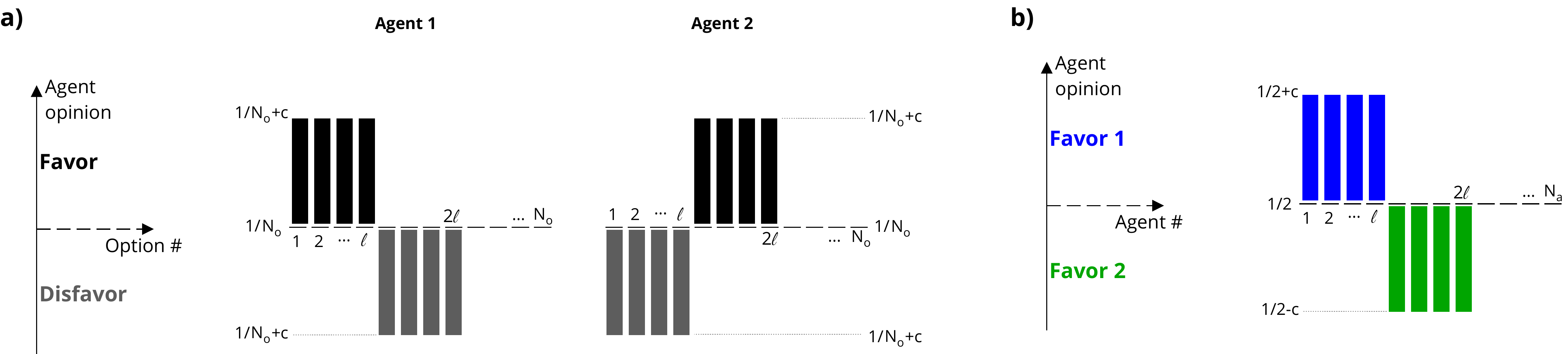}
	\caption{Interpretations of the $S_n\times\Z_2$ axial $T_l$ in terms of dissensus opinion formation. {\bf a)} $\Na=2$. {\bf b)} $\No=2$.}\label{FIG: 2 options disagreement 2}
\end{figure}

\section{Main theoretical results and their interpretation}
\label{SEC:main results}

We now state the main theoretical results of the paper and interpret them as needed. The algebraic data supporting the analysis in Sections~\ref{SSEC: 3-3 diss},~\ref{SSEC: n-3 diss}, and~\ref{SSEC: n-k diss} are summarized in Table~\ref{TAB: dissensus data NEW}.

\subsection{Existence of a dissensus quadratic equivariant for $\bs{\Na\geq 3}$ and $\bs{\No\geq 3}$}
\label{SSEC:quadratic equivariant}

Let $F_i:V_i\to V_i$ be the quadratic map defined as
$$F_i(\Zz_i)=
\begin{bmatrix}
\No(z_{i1})^2-\left((z_{i1})^2+\cdots+(z_{i\No})^2\right)\\
\vdots\\
\No(z_{i\No})^2-\left((z_{i1})^2+\cdots+(z_{i\No})^2\right)
\end{bmatrix}.
$$
If $\No\geq 3$, $F_i$ is not identically zero and $\ES_\No$-equivariant.

Let $F:V\to V$ be the quadratic map defined as
$$F(\Zz)=
\begin{bmatrix}
\Na F_1(\Zz_1)-(F_1(\Zz_1)+\cdots+F_\Na(\Zz_\Na))\\
\vdots\\
\Na F_\Na(\Zz_\Na)-(F_1(\Zz_1)+\cdots+F_\Na(\Zz_\Na))
\end{bmatrix}
$$
$F$ is $\Gamma$-equivariant. Moreover, if $\Na\geq 3$, $F|_{W_d}:W_d\to W_d$ is not identically zero.

It follows that, if $\Na\geq 3$ and $\No\geq 3$, there exists a non-zero quadratic equivariant in the dissensus space. Invoking~\cite[Theorem~2.14]{Golubitsky2002book} (see also~\cite[page~90]{Golubitsky1985-2}), we thus expect all dissensus branches predicted by the Equivariant Branching Lemma to be unstable. In turns this implies that dissensus opinion formation is expected to be switch-like whenever $\Na\geq 3$ and $\No\geq 3$.

\subsection{Dissensus axials for $\bs{\Na=\No=3}$}
\label{SSEC: 3-3 diss}
	
The main difference between the  $\Na=\No=3$ case considered here and the  $\Na\cdot\No=6$ considered in Sections~\ref{SSEC: three two} and~\ref{SSEC: two three} is that the group $\ES_3\times\ES_3$ is not isomorphic to any $\ES_k$ or $\D_k$ for any $k$. Novel types of axial conjugacy classes appear. Moreover, the existence of a quadratic equivariant, proved in Section~\ref{SSEC:quadratic equivariant}, implies that all dissensus bifurcation branches are unstable.

The following theorem classifies the axial subgroups of $\Gamma=\ES_3\times\ES_3$ acting on the dissensus space $W_d=V_3\otimes V_3\cong\R^2\otimes\R^2$, where one factor of $\Gamma$ permutes the agent index and the other factor permutes the option index, as defined in Table~\ref{TAB: dissensus data NEW}. It is a straightforward corollary of the more general Theorem~\ref{THM: D3 axials}. Let $\kappa^{\rm a}\in\ES_3$ be the order two element that swaps the first two agents and  $\kappa^{\rm o}\in\ES_3$ be the order two element that swaps the first two options. Let $\theta^{\rm a}\in\ES_3$ be the order three element that cycles forward the agents and  $\theta^{\rm o}\in\ES_3$ be the order three element that cycles forward the options. Let $\rho\in\ES_3\times\ES_3$ be the order two element defined by $\rho=(\kappa^{\rm a},\kappa^{\rm o})$. Let $\nu\in\ES_3\times\ES_3$ be the order three element defined by $\nu=(\theta^{\rm a},\theta^{\rm o})$.

\begin{theorem}\label{THM: 3-3 dissensus}
	There are two conjugacy classes of axial subgroups for the action of $\Gamma=\ES_3\times\ES_3$  on $W_d\cong\R^2\otimes\R^2$ as defined in Table~\ref{TAB: dissensus data NEW}. They are
	\begin{itemize}
		\item[\it I)]
		\begin{equation}\label{EQ: 3-3 product}
		\Z_2(\kappa^{\rm a})\times\Z_2(\kappa^{\rm o})
		\end{equation}
		with fixed point subspace
		\begin{equation}\label{EQ: 3-3 product fix}
		\Fix(\Z_2(\kappa^{\rm a})\times\Z_2(\kappa^{\rm o}))=\R\left\{(v_3,v_3,-2v_3)\right\}
		\end{equation}
		where $\kappa^{\rm o}v_3=v_3$, that is, $v_3=\in\R\left\{\left(-\frac{1}{2},-\frac{1}{2},1\right)\right\}$.
		\item[\it II)]
		\begin{equation}\label{EQ: 3-3 S3}
		\Z_3(\nu)\times\Z_2(\rho)
		\end{equation}
		with fixed point subspace
		\begin{equation}\label{EQ: 3-3 S3 fix}
		\Fix(\Z_3(\nu)\times\Z_2(\rho))=\R\left\{(v_1,v_2,v_3)\right\}
		\end{equation}
		where $\kappa^{\rm o}v_1=v_2$, $\theta^{\rm o}v_1=v_2$,  $(\theta^{\rm o})^2v_1=v_3$, that is, $v_1\in\R\left\{\left(1,-\frac{1}{2},-\frac{1}{2}\right)\right\}$, $v_2\in\R\left\{\left(-\frac{1}{2},1,-\frac{1}{2}\right)\right\}$, and $v_3\in\R\left\{\left(-\frac{1}{2},-\frac{1}{2},1\right)\right\}$.
	\end{itemize}
	Moreover, all the bifurcation branches are unstable.
\end{theorem}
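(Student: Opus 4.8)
The plan is to work in the matrix model of the dissensus space. I would identify $W_d=V_3\otimes V_3\cong\R^2\otimes\R^2$ with the space of $3\times 3$ real matrices whose row sums and column sums all vanish, letting the agent factor of $\Gamma=\ES_3\times\ES_3$ permute rows and the option factor permute columns. This space is $4$-dimensional, and since each factor acts absolutely irreducibly on $V_3$, the representation $W_d$ is a single absolutely irreducible representation of $\Gamma$ with $\Fix_{W_d}(\Gamma)=\{0\}$, so the Equivariant Branching Lemma applies. Because axial subgroups are exactly the isotropy subgroups $\Sigma$ with $\dim\Fix_{W_d}(\Sigma)=1$, the whole classification (stability aside) reduces to an enumeration of such subgroups up to conjugacy.

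The computational engine will be the trace formula $\dim\Fix_{W_d}(\Sigma)=\frac{1}{|\Sigma|}\sum_{g\in\Sigma}\chi(g)$, where the character factorizes as $\chi(\sigma^{\rm a},\sigma^{\rm o})=\chi_{V_3}(\sigma^{\rm a})\,\chi_{V_3}(\sigma^{\rm o})$ with $\chi_{V_3}(e)=2$, $\chi_{V_3}=0$ on transpositions, and $\chi_{V_3}=-1$ on $3$-cycles. First I would run through the conjugacy classes of subgroups of the order-$36$ group $\ES_3\times\ES_3$ and evaluate this formula. The key observations are that every nontrivial cyclic subgroup gives $\dim\Fix\in\{0,2\}$ (never $1$); that up to conjugacy the only Klein four-subgroups are the ``rectangular'' ones $\Z_2(\kappa^{\rm a})\times\Z_2(\kappa^{\rm o})$, for which the formula yields $\frac{1}{4}(4+0+0+0)=1$; and that among the order-$6$ subgroups only the diagonal copies of $\ES_3$, all conjugate to $\langle\nu,\rho\rangle$ (note $\rho\nu\rho^{-1}=\nu^{-1}$, so this is dihedral of order $6$), give $\frac{1}{6}(4+1+1+0+0+0)=1$; all larger subgroups give $\dim\Fix=0$. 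This pins down exactly the two candidate classes~\eqref{EQ: 3-3 product} and~\eqref{EQ: 3-3 S3}.

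Next I would confirm that each candidate is genuinely a (maximal) isotropy subgroup and extract its fixed line. For Type~I the invariance equations force rows $v_3,v_3,-2v_3$ with $\kappa^{\rm o}v_3=v_3$, giving~\eqref{EQ: 3-3 product fix}; a direct check shows the full stabilizer of such a matrix is exactly $\Z_2(\kappa^{\rm a})\times\Z_2(\kappa^{\rm o})$, so no strictly larger subgroup shares this line. For Type~II the fixed matrix is a multiple of $\frac{3}{2}I-\frac{1}{2}J$ with $J$ the all-ones matrix; since permuting rows and columns of $J$ returns $J$, invariance of $\frac{3}{2}I-\frac{1}{2}J$ forces the row permutation to equal the column permutation, so the stabilizer is precisely the diagonal $\ES_3=\langle\nu,\rho\rangle$, yielding~\eqref{EQ: 3-3 S3 fix}. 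The two classes are non-conjugate since they have different orders. Finally, instability of all branches is immediate: Section~\ref{SSEC:quadratic equivariant} exhibits a nonzero $\Gamma$-equivariant quadratic on $W_d$ (valid since $\Na=\No=3\geq 3$), and \cite[Theorem~2.14]{Golubitsky2002book} then forces every branch produced by the Equivariant Branching Lemma to be unstable at bifurcation.

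The main obstacle is the exhaustiveness step, i.e.\ guaranteeing that the subgroup enumeration is complete up to conjugacy and that the two surviving candidates are maximal, hence true isotropy subgroups rather than proper subgroups of some axial. Each individual character computation is routine, but one must organize the conjugacy classes of subgroups of $\ES_3\times\ES_3$ with care; this is exactly the bookkeeping that is packaged once and for all in the general Theorem~\ref{THM: D3 axials}, from which the present statement also follows as the special case $n=3$.
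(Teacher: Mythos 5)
Your proposal is correct, but it takes a genuinely different route from the paper. The paper obtains Theorem~\ref{THM: 3-3 dissensus} as the $n=3$ specialization of the structural Theorem~\ref{THM: D3 axials}: there, candidate axials are split into products $A\times B$ (handled via the fixed-point-dimension lemma of \cite{Dionne1996}, which rules out every option factor except $\Z_2(\kappa)$) and non-products, which Lemma~\ref{lem: non product axials proj ker} forces to be graphs of homomorphisms $\varphi:H\to\ES_3$; Lemma~\ref{LEM: ker is isotropy} then identifies $\ker\varphi$ as an isotropy subgroup $\ES_{k_1}\times\cdots\times\ES_{k_s}$ of the $\ES_n$-action, and coset-plus-dimension counting pins down the remaining classes. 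You instead exploit the finiteness of $\ES_3\times\ES_3$ (order $36$): the trace formula $\dim\Fix(\Sigma)=\frac{1}{|\Sigma|}\sum_{g\in\Sigma}\chi(g)$ with the product of standard characters, swept over all conjugacy classes of subgroups. Your arithmetic checks out: cyclic subgroups give $\dim\Fix\in\{0,2\}$ (e.g.\ the diagonal $\Z_3$ gives $\frac13(4+1+1)=2$, not $1$), the only Klein four-groups are the rectangular ones since a product of distinct transpositions in $\ES_3$ has order $3$, the order-$6$ survivors are exactly the diagonal copies of $\ES_3$ (all conjugate, since every automorphism of $\ES_3$ is inner, and you correctly note $\rho\nu\rho^{-1}=\nu^{-1}$ makes $\langle\nu,\rho\rangle$ dihedral of order $6$), and the subgroups of order $9,12,18,36$ all give $\dim\Fix=0$. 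Importantly, you also supply the step the trace formula alone does not give — upgrading ``subgroup with one-dimensional fixed space'' to ``isotropy subgroup'' — via explicit stabilizer computations in the matrix model; your argument that $PMQ^{T}=M$ with $M=\tfrac32 I-\tfrac12 J$ forces $Q=P$ (because $PJQ^{T}=J$ always) is clean and correct, as is the row-pattern argument for Type~I. The instability claim is handled identically to the paper, via the quadratic equivariant of Section~\ref{SSEC:quadratic equivariant} and \cite[Theorem~2.14]{Golubitsky2002book}. The trade-off between the two approaches is the one you name: the character enumeration is elementary and self-contained, and yields exhaustiveness by a finite check with no graph-homomorphism machinery, but it is intrinsically tied to $n=3$; the paper's structural argument scales to all $\ES_n\times\ES_3$ at once and is what feeds the partial generalization in Theorem~\ref{THM:general axials}.
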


The interpretation of the two subgroups is provided in Figure~\ref{FIG: 3-3}. The axial $\Z_2(\kappa^{\rm a})\times\Z_2(\kappa^{\rm o})$ describes the situation in which the disagreeing agents form two clusters (Figure~\ref{FIG: 3-3}a). One is made of two agents, the moderate, that  either develop a weak opinion toward one of the options (right) or remain conflicted between two options (left). The second cluster is made of one agent, the extremist, that develops a stronger opinion exactly opposing the moderate cluster. The axial $\Z_3(\nu)\times\Z_2(\rho)$ describes the situation in which the disagreeing agents develop symmetrically opposed, but homogeneous in strength, opinions (Figure~\ref{FIG: 3-3}b). They either favor different options (right), or remain conflicted between different pairs of options (left).

\begin{figure}
	\centering
	\includegraphics[width=\textwidth]{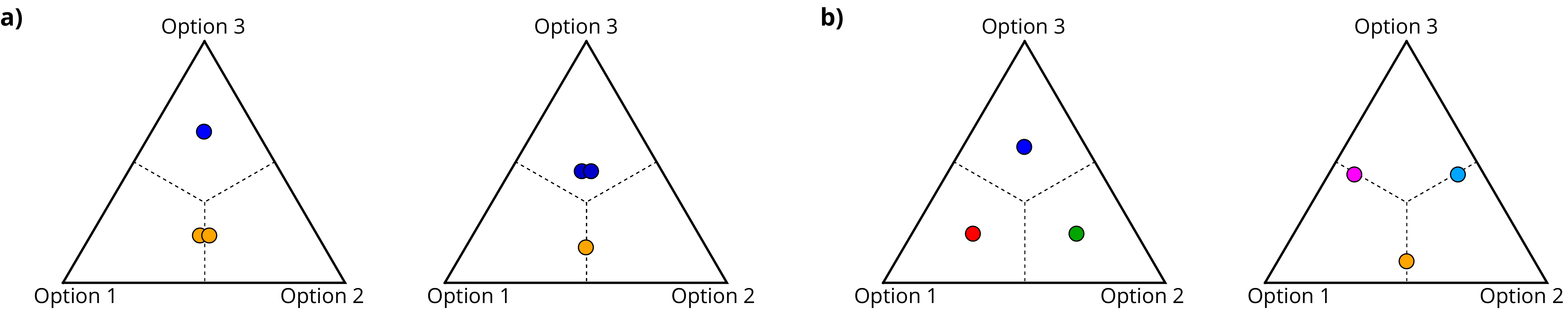}
	\caption{Dissensus decision making between three agents and three options. See text for details.}\label{FIG: 3-3}
\end{figure}

We stress that because all bifurcation branches are unstable the transition from neutrality to dissensus is discontinuous, as in Figure~\ref{FIG: switch-like}b2. In general, other types of branches, not predicted by the Equivariant Branching Lemma, might appear at and away from the singularity. However, such a situation is not generic~\cite{Field1989}, as opposed to the Equivariant Branching Lemma, which holds generically. We can therefore expect that, similarly to the three-option consensus opinion formation in Figure~\ref{FIG: switch-like}b1 and Figure~\ref{FIG: 2 agents 3 options}a, the dissensus branches predicted here bend at a fold singularity and become stable away from the singularity, thus attracting the group decision state. Simulations reveal that this is the case in the numerical model~\eqref{EQ:generic decision dynamics}.

\subsection{Dissensus axials for $\bs{\Na=3}$ or $\bs{\No=3}$}
\label{SSEC: n-3 diss}


When either $\Na=3$ and $\No=n\geq 3$, or $\No=3$ and $\Na=n\geq 3$, the action of $\Gamma$ on the dissensus space is the action of $\ES_n\times\ES_3$ on $V_n\otimes V_3\cong\R^{n-1}\otimes\R^{2}$. Each factor of $\Gamma$ permutes the indices of the corresponding tensor product factor in the natural way, as defined in Table~\ref{TAB: dissensus data NEW}. Note that the two actions specified in Table~\ref{TAB: dissensus data NEW} for $\Na=n,\No=3$ and for $\Na=3,\No=n$ are the same action. They differ in their interpretation, though, as in one case the factor $\ES_n$ permutes the agent indices and the factor $\ES_3$ permutes the option indices, and vice-versa for the other case.

Let $\sigma_m\in \ES_n$ be the order two element defined as follows. Partition $n$ into $r\geq 2$ blocks such that the first and second blocks have each the same number $m$ of elements. Let $a_{1},\ldots,a_{m}$ and $b_{1},\ldots,b_{m}$ be the elements of the first and second block, respectively. Define $\sigma_m$ as the permutation that swaps the elements of the first and second block,
\begin{equation}\label{EQ: two block swap}
\sigma_m=(a_{1}\,b_1)\cdots(a_{m}\,b_{m}).
\end{equation}
Let also $\rho_m\in \ES_n\times\ES_3$ be the order two element defined as
\begin{equation}\label{EQ: two block swap and conj}
\rho_m=(\sigma_m,\kappa).
\end{equation}
where $\kappa=(1\,2)\in\ES_3$. Let $\mu_m\in \ES_n$ be the order three elements defined as follows. Partition $n$ into $r\geq 3$ blocks such that the first three blocks have each the same number $m$ of elements. Let $a_{1},\ldots,a_{m}$, $b_{1},\ldots,b_{m}$, and $c_{1},\ldots,c_{m}$ be the element of the first, second, and third block, respectively. Define $\mu_m$ as the permutation that cycles forward the elements of the first three blocks,
\begin{equation}\label{EQ: three block cycle}
\mu_m=(a_{1}\,b_{1}\,c_{1})\cdot\ldots\cdot(a_{m}\,b_{m}\,c_{m}).
\end{equation}
Let $\nu_m\in \ES_n\times \ES_3$ be the order three element defined as
\begin{equation}\label{EQ: three block swap and cycle}
\nu_m=(\mu_m,\theta),
\end{equation}
where $\theta=(1\,2\,3)\in\ES_3$. The following theorem fully characterizes axial subgroups of $\ES_n\times\ES_3$ acting on $V_n\otimes V_3\cong\R^{n-1}\otimes\R^{2}$. Its proof is provided in Section~\ref{SupSEC: proofs}.

\begin{theorem}\label{THM: D3 axials}
	The conjugacy classes of axial subgroups of $\ES_n\times\ES_3$ acting on $V_n\otimes V_3$ as specified in Table~\ref{TAB: dissensus data NEW} are given by:
	\begin{itemize}
		\item[\it I)]
		\begin{equation}\label{EQ: n-3 product axial}
		\Sigma^\times_m =
		\ES_m\times \ES_{n-m}\times\Z_2(\kappa),
		\end{equation}
		where $1\leq m \leq n/2$, with fixed point subspace
		\begin{equation}\label{EQ: n-3 product axial fix}
		\Fix(\Sigma^\times_m)=\R\Bigg\{
		\bigg(\underbrace{v_3,\ldots,v_3}_{m\text{ times}},\underbrace{{\textstyle\frac{m}{m-n}}v_3,\ldots,{\textstyle\frac{m}{m-n}}v_3}_{n-m\text{ times}}\bigg)
		\Bigg\},
		\end{equation}
		where $\kappa v_3=v_3$, that is, $v_3\in\R\left\{\left({\textstyle-\frac{1}{2}},{\textstyle-\frac{1}{2}},1\right)\right\}$.
		
		\item[\it II)]
		\begin{equation}\label{EQ: n-3 graph Z2 axial}
		\Sigma^{\Z_2}=\ES_{n/2}\times \ES_{n/2}\times \Z_2(\rho_{n/2}),
		\end{equation}
		with fixed point subspace
		\begin{equation}\label{EQ: n-3 graph Z2 axial fix}
		\Fix(\Sigma^{\Z_2})=\R\Bigg\{
		\bigg(\underbrace{v_0,\ldots,v_0}_{\frac{n}{2}\text{ times}},\underbrace{-v_0,\ldots,-v_0}_{\frac{n}{2}\text{ times}}\bigg)
		\Bigg\},
		\end{equation}
		where $\kappa v_0=-v_0$, that is, $v_{0}\in\R\{(1,-1,0)\}$.
		
		\item[\it III)]
		\begin{equation}\label{EQ: n-3 graph D3 axial}
		\Sigma^{\ES_3}_m=\ES_m\times \ES_m\times \ES_m\times \Z_2(\rho_m) \times \Z_3(\nu_m),
		\end{equation}
		where $1\leq m\leq n/3$, with fixed point subspace
		\begin{equation}\label{EQ: n-3 graph D3 axial fix}
		\begin{array}{c}
		\Fix(\Sigma^{\ES_3}_m)=\R\Bigg\{
		\bigg(\underbrace{v_1,\ldots,v_1}_{m\text{ times}},\underbrace{v_2,\ldots,v_2}_{m\text{ times}},\underbrace{v_3,\ldots,v_3}_{m\text{ times}},\underbrace{0,\ldots,0}_{n-3m\text{ times}}\bigg)
		\Bigg\},
		\end{array}
		\end{equation}
		where $\kappa v_1=v_2$, $\theta v_1=v_2$, $\theta^2v_1=v_3$, that is, $v_1\in\R\left\{\left(1,{\textstyle-\frac{1}{2}},{\textstyle-\frac{1}{2}}\right)\right\}, v_2\in\R\left\{\left({\textstyle-\frac{1}{2}},1,{\textstyle-\frac{1}{2}}\right)\right\}, v_3\in\R\left\{\left({\textstyle-\frac{1}{2}},{\textstyle-\frac{1}{2}},1\right)\right\}$. 
		
		\item[IV)] If $n\geq 3$, then all the axial branches are generically unstable.
	\end{itemize}
\end{theorem}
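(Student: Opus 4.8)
The plan is to treat the theorem in two independent pieces: the classification of axial subgroups (parts I--III) and their generic instability (part IV). For the classification I would first fix a concrete model of the representation. Identify $V_n\otimes V_3$ with the space of $n\times 3$ real matrices having all row sums and all column sums equal to zero, so that a point $M$ is an $n$-tuple of rows $r_1,\dots,r_n\in V_3$ with $\sum_i r_i=0$, where $\ES_n$ permutes the rows and $\ES_3\cong\D_3$ acts diagonally on them. An axial subgroup is an isotropy subgroup $\Sigma$ with $\dim\Fix(\Sigma)=1$, so the task is to list these up to conjugacy and exhibit the spanning vector of each $\Fix(\Sigma)$.

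The main tool is the projection $\pi_o\colon\Sigma\to\ES_3$ onto the option factor together with the normal agent-only subgroup $\Sigma_a=\Sigma\cap(\ES_n\times\{1\})$, which fixes every column and hence permutes equal rows; thus $\Sigma_a=\ES_{n_1}\times\cdots\times\ES_{n_b}$ is a Young subgroup, $b$ being the number of distinct rows. Since $\Sigma_a$ acts trivially on options, $\Fix(\Sigma_a)=\Fix_{V_n}(\Sigma_a)\otimes V_3$, and $\Fix(\Sigma)$ is the subspace of this left invariant by the induced action of $\Sigma/\Sigma_a\cong\pi_o(\Sigma)$. The first reduction is that $\pi_o(\Sigma)$ must contain a reflection: if $\pi_o(\Sigma)$ lies in the cyclic rotation subgroup $\Z_3\subset\ES_3$, then $V_3$ restricted to it is of complex type (or trivial, doubled), so $\Fix(\Sigma)$ is even-dimensional and cannot equal $1$. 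Hence only $\pi_o(\Sigma)=\Z_2$ (a single reflection) and $\pi_o(\Sigma)=\ES_3$ remain.

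When $\pi_o(\Sigma)=\Z_2(\kappa)$, write a coset generator as $(\sigma,\kappa)$ and split $V_3=\R v_3\oplus\R v_0$ into the $\pm1$ eigenspaces of $\kappa$, with vertex direction $v_3=(-\frac{1}{2},-\frac{1}{2},1)$ and edge direction $v_0=(1,-1,0)$. A direct eigenvalue count shows that the fixed space of $(\sigma,\kappa)$ inside $\Fix_{V_n}(\Sigma_a)\otimes V_3$ has dimension exactly $\dim\Fix_{V_n}(\Sigma_a)=b-1$, so the axial condition forces $b=2$. With exactly two row-blocks $\sigma$ normalizes $\Sigma_a=\ES_m\times\ES_{n-m}$: if $m\neq n-m$ then $\sigma$ must fix each block, acts trivially on the fixed line, selects the vertex component, and gives $\Sigma^\times_m$ of (I); if $m=n-m$ and $\sigma$ swaps the two equal blocks it acts as $-1$ on the line, selects the edge component, and gives $\Sigma^{\Z_2}$ of (II). When $\pi_o(\Sigma)=\ES_3$, one has $\Fix(\Sigma)=(\Fix_{V_n}(\Sigma_a)\otimes V_3)^{\ES_3}$, whose dimension equals the multiplicity of the standard representation $V_3$ in the permutation module $\Fix_{V_n}(\Sigma_a)$ on the blocks; by Frobenius reciprocity this multiplicity is $1$ precisely when the blocks contain a single regular triple, i.e. three equal blocks of size $m$ permuted as the natural $\ES_3$-action, with the remaining $n-3m$ agents neutral and freely permuted. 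This yields $\Sigma^{\ES_3}_m$ of (III), the twists being forced into the block forms \eqref{EQ: two block swap} and \eqref{EQ: three block cycle}. I expect the delicate point, and where most of the combinatorial work lies, to be exhaustiveness in this last case: one must verify that any further block orbits contribute zero to the invariants and therefore force the corresponding rows to vanish, collapsing into the neutral agents rather than producing genuinely new axials, and that the twisting permutations cannot be supported on unequal blocks.

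Part IV is then immediate from machinery already in place. By Section~\ref{SSEC:quadratic equivariant}, whenever both index sets have size at least three---which holds here for $n\geq 3$, since the two factors are $\ES_n$ and $\ES_3$---the dissensus space $W_d=V_n\otimes V_3$ supports a nonzero $\Gamma$-equivariant quadratic map. Its presence makes the reduced bifurcation equation on each one-dimensional fixed-point space carry a nonvanishing quadratic term, so every axial branch bifurcates transcritically; invoking \cite[Theorem~2.14]{Golubitsky2002book} then shows that all these branches are unstable at bifurcation, which is the assertion of (IV).
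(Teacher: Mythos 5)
Your proposal is correct and arrives at exactly the lists in I)--III) and the instability statement IV), but it takes a genuinely transposed route from the paper's. The paper projects $\Sigma$ onto the \emph{agent} factor $\ES_n$: it first isolates product axials $A\times B$ (dispatched via \cite[Lemma~3.1]{Dionne1996}), then for non-product axials must prove Lemma~\ref{lem: non product axials proj ker} (the projection $\Pi|_\Sigma$ has trivial kernel, so $\Sigma$ is the graph of a homomorphism $\varphi:H\to\ES_3$) and Lemma~\ref{LEM: ker is isotropy} ($L=\ker\varphi$ is a Young subgroup), and finally enumerates cosets $\Sigma=(L,\ONE)\cup(h_1L,\kappa)\cup(h_2L,\theta)$ with an explicit block-by-block count of isotropy conditions, e.g.\ $(s-2r)+2r-1=s-1$. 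You instead project onto the small \emph{option} factor $\ES_3$, so the kernel $\Sigma_a=\Sigma\cap(\ES_n\times\{\ONE\})$ is automatically the Young subgroup of the row partition (no analogue of the graph lemma is needed), product and non-product cases are treated uniformly, and the paper's hand counts are replaced by two cleaner devices: the $\pm1$ eigenvalue-pairing argument showing that any coset generator $(\sigma,\kappa)$ fixes exactly a $(b-1)$-dimensional subspace of $\Fix(\Sigma_a)$, which forces $b=2$ and yields the dichotomy $\Sigma^\times_m$ ($\sigma$ preserves the blocks, vertex direction $v_3$) versus $\Sigma^{\Z_2}$ ($\sigma$ swaps equal blocks, edge direction $v_0$); and, for $\pi_o(\Sigma)=\ES_3$, the Frobenius-reciprocity count of the multiplicity of the standard representation over block orbits ($0$ for orbits of size $1$ or $2$, $1$ for size $3$, $2$ for size $6$), which reproduces the paper's conclusion that exactly one naturally permuted triple of equal blocks carries nonzero rows. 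Your even-dimensionality exclusion of $\pi_o(\Sigma)\subseteq\Z_3$ covers in one stroke what the paper argues separately for $B=\ONE$, $B=\Z_3(\theta)$, and $\varphi(H)=\Z_3(\theta)$. Two points to tighten in a full write-up: (i) as you flag yourself, one must check that the zero blocks merge into a single freely permuted neutral block (otherwise $\Sigma$ is not a full isotropy subgroup) and that the twisting permutations can, up to conjugacy by $\Sigma_a$, be normalized to the block swap \eqref{EQ: two block swap} and block $3$-cycle \eqref{EQ: three block cycle} --- this is precisely the paper's ``up to conjugacy by elements of $L$'' step; (ii) in IV your gloss that the quadratic equivariant makes each axial branch transcritical is slightly too strong, since the quadratic map may vanish on a particular axial line and \cite[Theorem~2.14]{Golubitsky2002book} derives generic instability also from directions transverse to $\Fix(\Sigma)$ --- but your actual invocation of Section~\ref{SSEC:quadratic equivariant} together with that theorem is word-for-word the paper's proof of IV, so the conclusion stands.
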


\begin{remark}
	Observe that for $n=2$ the list of axial subgroups in~Theorem~\ref{THM: D3 axials} reduces to the axial subgroups of the action of $\D_6$ on $\R^2$.
\end{remark}

We start by interpreting Theorem~\ref{THM: D3 axials} in terms of opinion formation and postpone its proof to the end of the section. Each axial subgroup constructed in Theorem~\ref{THM: D3 axials} admits two interpretations, depending on whether $\Na=3$ and $\No=n$, or $\No=3$ and $\Na=n$. When $\Na=3$ and $\No=n$, the interpretation of the axial subgroup $\Sigma^{\times}_m$ is sketched in Figure~\ref{FIG: 3 options disagreement 1}a. In this representation, $\kappa\in\ES_3$ acts by swapping agents $1$ and $2$. The factor $\Z_2(\kappa)$ therefore imposes that agent $1$'s and agent $2$'s opinions are equal. The fixed point subspace of this subgroup describes the situation in which agent $3$ strongly favors the first $m$ options and weakly disfavors the last $\No-m$ options or strongly disfavors the first $m$ options and weakly favors the last $\No-m$ option. Both agents $1$ and $2$ are more moderate. Their opinions are opposite to agent $3$'s opinion, but with the half of the strength. When $\No=3$ and $\No=n$, $\kappa\in\ES_3$ acts by swapping options $1$ and $2$ and the resulting interpretation is sketched in Figure~\ref{FIG: 3 options disagreement 1}b. A small group of agents (the extremists) strongly favor (disfavor) option $3$, while a larger group of agents (the moderates) weakly disfavor (favor) option $3$ and are unopinionated between options $1$ and $2$.

\begin{figure}[h!]
	\centering
	\includegraphics[width=\textwidth]{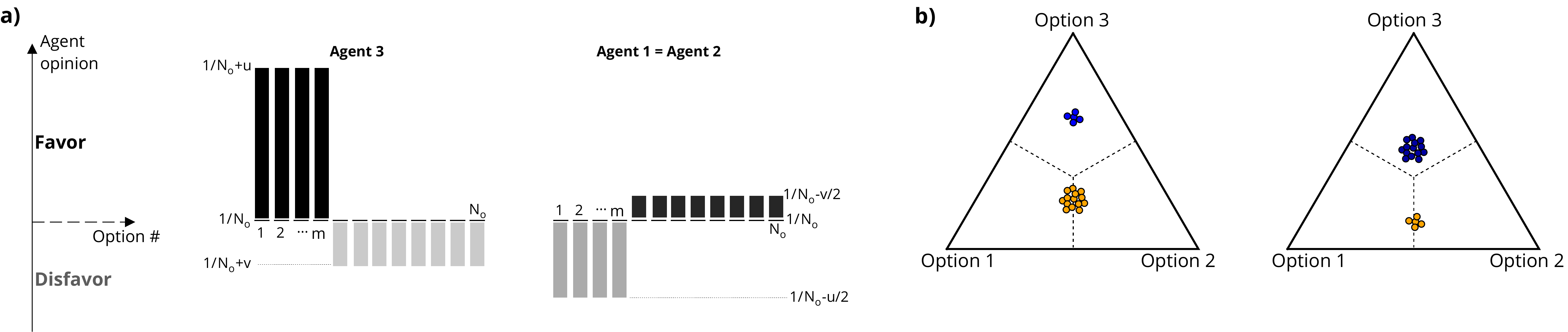}
	\caption{Interpretations of the $S_n\times\D_3$ axial $\Sigma^{\times}_m$ in terms of opinion formation. {\bf a)} $\Na=3$.  {\bf b)} $\No=3$.}\label{FIG: 3 options disagreement 1}
\end{figure}

For the axial  $\Sigma^{\Z_2}$, when $\Na=3$ and $\No=n$, the factor $\Z_2(\rho_{n/2})$ implies that agent $1$ and agent $2$ develop exactly opposite opinions about the $\No$ options. The axial fixed point subspace can then be interpreted as follows (Figure~\ref{FIG: 3 options disagreement 2}a). Agent $1$ favors the first $n/2$ options and disfavors the last $n/2$ options, while agent $2$ disfavors the first $n/2$ options and favors the last $n/2$ options. Agent $3$ remains unopinionated about all options. When $\No=3$ and $\Na=n$, the factor $\Z_2(\rho_{n/2})$ implies that the agent group splits in two equally sized groups with exactly opposite opinions. By analyzing the axial fixed point subspace, we conclude that one group favors option 1, strongly disfavors option 2, weakly disfavors option 3, while the other group strongly favors option 2, strongly disfavor option 1, and weakly disfavors option 3  (Figure~\ref{FIG: 3 options disagreement 2}b).

\begin{figure}[h!]
	\centering
	\includegraphics[width=\textwidth]{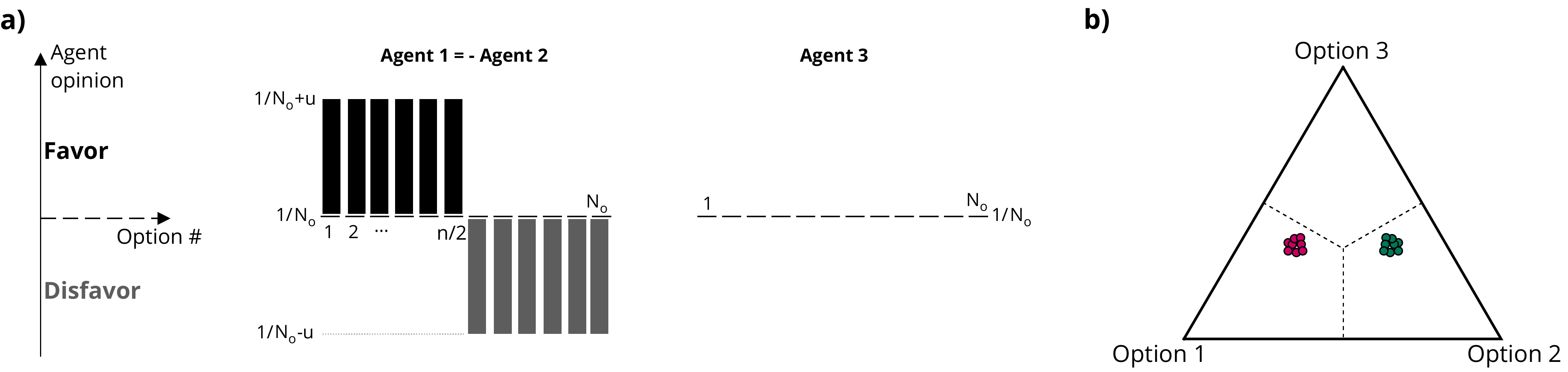}
	\caption{Interpretations of the $S_n\times\D_3$ axial $\Sigma^{\Z_2}$ in terms of opinion formation. {\bf a)} $\Na=3$.  {\bf b)} $\No=3$.}\label{FIG: 3 options disagreement 2}
\end{figure}

We finally interpret the axial $\Sigma^{\ES_3}_m$. When $\Na=3$ and $\No=n$, the factor $\Z_2(\rho_m)$ implies that the opinion that agent $1$ has about the first $m$ options is the same as the opinion that agent $2$ has about the second $m$ options, and vice-versa. The factor $\Z_3(\varrho_m)$ implies that the opinion that agent $1$ has about the first (resp. second, third) $m$ options is the same as the opinion that the second agent has about the third (resp. first, second) $m$ options and the same as the opinion that the third agent has about the second (resp. third, first) $m$ options. All agents are neutral about the last $\No-3m$ options. This interpretation is sketched in Figure~\ref{FIG: 3 options disagreement 3}a. It shows that this axial corresponds to each agent possessing an ensemble of $m$ strongly favored (resp. disfavored - not shown in the figure) options, while equally disfavoring (resp. favoring) $2m$ of the remaining options and remaining neutral about the remaining $\No-3m$ options.  When $\No=3$ and $\Na=n$ the interpretation is straightforward. $\Na-3m$ agent are unopinionated and the remaining $3m$ agent opinions are distributed in the 2 simplex with $\D_3\cong\ES_3$ (i.e., equilateral triangle) symmetry (Figure~\ref{FIG: 3 options disagreement 3}b). There are two possible configurations. In one, each group of $m$ agent has its favorite option. In the other, each group has its disfavored option and is conflicted between the remaining two options. 

\begin{figure}[h!]
	\centering
	\includegraphics[width=\textwidth]{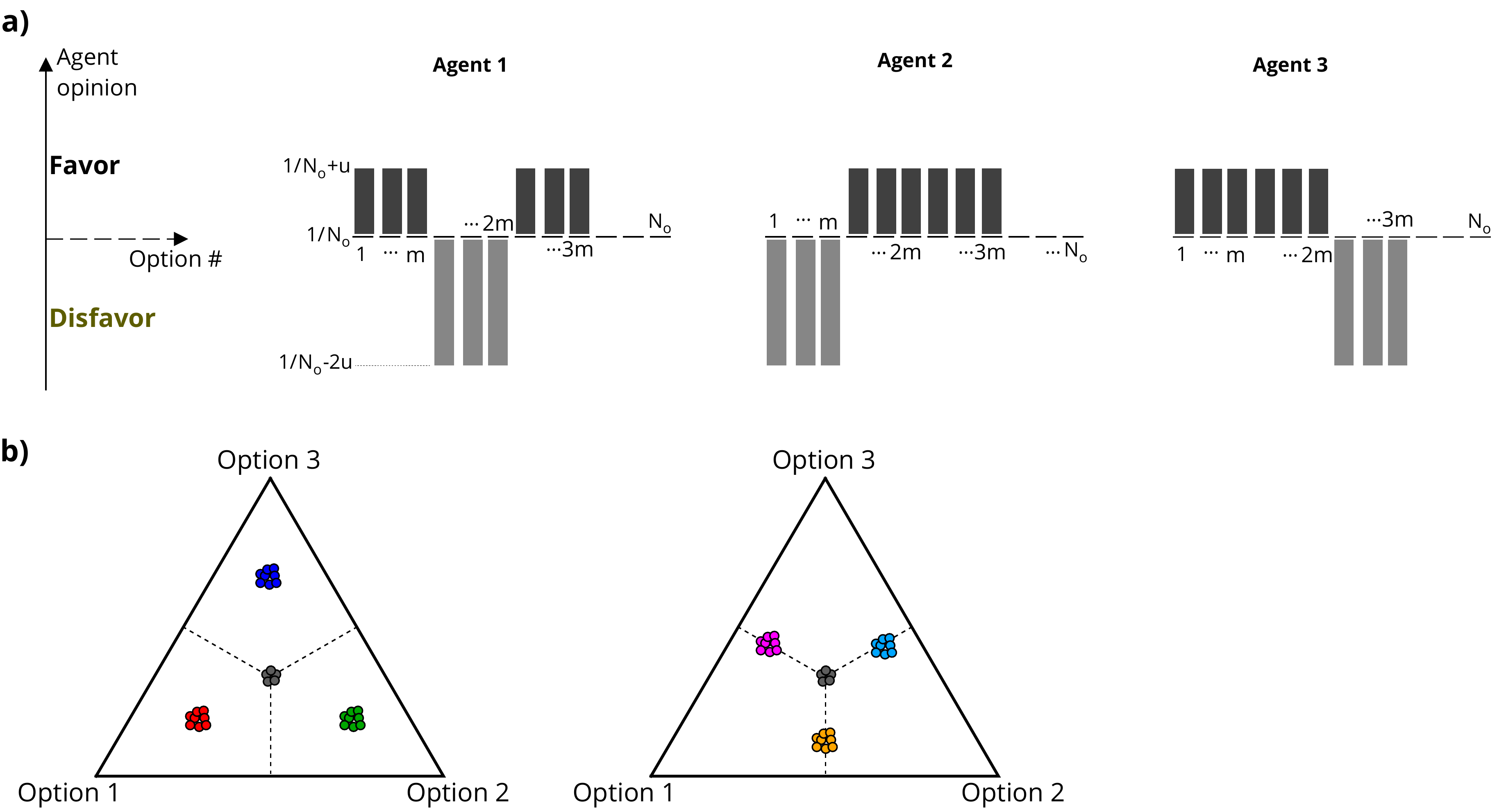}
	\caption{Interpretations of the $S_n\times\D_3$ axial $\Sigma^{\D_3}_m$ in terms of opinion formation. {\bf a)} $\Na=3$.  {\bf b)} $\No=3$.}\label{FIG: 3 options disagreement 3}
\end{figure}

\subsection{A first result for general $\bs{\Na}\geq 3$ and $\bs{\No}\geq 3$ disagreement}
\label{SSEC: n-k diss}

In the sequel we provide a list of axial subgroups for the action of $\Gamma=\ES_n\times\ES_k$ on the dissensus space $W_d=V_n\otimes V_k\cong\R^{n-1}\otimes\R^{k-1}$ as defined in Table~\ref{TAB: dissensus data NEW}.

We need some preliminary notions and definitions. The normalizer of a subgroup $\Sigma\subset\Gamma$ is
\[
N_\Gamma(\Sigma) = \{\gamma\in\Gamma: \gamma\Sigma = \Sigma\gamma\}
\]  
A well known result is $\gamma\in N_\Gamma(\Sigma)$ if and only if $\gamma (\Fix(\Sigma)) = \Fix(\Sigma)$. Indeed, if $\sigma\in\Sigma$, $\gamma\in N_\Gamma(\Sigma)$, and $x\in\Fix(\Sigma) $, then $\sigma(\gamma x)=\gamma(\sigma x)=\gamma x$, and therefore $\gamma x\in \Fix(\Sigma)$.
Define
\begin{equation}\label{EQ: normal direct product}
A\dot\times B = \{\gamma\in N_\Gamma(A\times B) : \gamma |_{\Fix(A\times B)} = \ONE_{\Fix(A\times B)}\}. 
\end{equation}
Note that $A\dot\times B$ is axial if and only if $\dim\Fix(A\times B) = 1$. Indeed, by definition, if $\gamma\in A\dot\times B$ and $z \in \Fix(A\times B)$ then $\gamma z = z$.  It follows that $\Fix(A\times B) \subset \Fix(A\dot\times B)$ and $\dim \Fix(A\times B) \leq \dim\Fix(A\dot\times B)$. However, $A\times B\subset A\dot\times B$ and therefore $\dim \Fix(A\times B) \geq \dim\Fix(A\dot\times B)$. It follows that, in general, $\dim \Fix(A\times B) = \dim\Fix(A\dot\times B)$. If $A\dot\times B$ is axial, then $\dim \Fix(A\times B) = \dim\Fix(A\dot\times B)=1$. Conversely, if $\dim \Fix(A\times B)=1$, then $\dim\Fix(A\dot\times B)=1$ and it remains to prove that $A\dot\times B$ is an isotropy subgroup. Let $\Fix(A \times B)  = \R\{z\}$ and suppose $\gamma \in \Sigma_z$.  Then, $\gamma: \Fix(A \times B) \to \Fix(A \times B)$ and therefore $\gamma\in N_\Gamma(A \times B)$.  Moreover, $\gamma$ fixes $z$ and so, by definition, $\gamma \in A \dot\times B$.  It follows that $\Sigma_z$ is a subset of $A \dot\times B$ and equallity follows because the reverse inclusion is straightforward.



Partition $\{1,\ldots,n\}$ into $r\geq 2$ blocks such that the first $s\geq 2$ blocks have each the same number $m$ of elements. Let the order 2 element $\rho_m\in\ES_n\times\ES_k$ be defined as
$$\rho_m=(\sigma_m,\sigma_{(12)}),$$
where $\sigma_m\in\ES_n$ swaps the first two blocks of vectors and $\sigma_{(12)}\in\ES_k$ swaps the first two elements of each vector. In other words, the element $\rho_m$ simultaneously swaps the first two blocks of vectors, $(\Zz_1,\ldots,\Zz_m)\,\leftrightarrow\,(\Zz_{m+1},\ldots,\Zz_{2m})$, and the first and second element of each vector. Let also $\nu_m^{(s)}\in\ES_n\times\ES_k$ be the order $s$ element defined as
$$\nu_m^{s}=(\mu_m^{(s)},\sigma_{(12\cdots s)})$$
where $\mu_m^{(s)}\in\ES_n$ cycles forward the first $s$ blocks of vectors and $\sigma_{(12\cdots s)}\in\ES_k$ cycles forward the first $s$ components of each vector. In other words, the element $\nu_m^{(s)}$ simultaneously cycles forward the first $s$ blocks of vectors and the first $s$ components of each vector. The proof of the following theorem is provided in Section~\ref{SupSEC: proofs}.

\begin{theorem}\label{THM:general axials}
	Consider the action of $\Gamma=\ES_n\times\ES_k$ on $W_d=V_n\otimes V_k$ as specified above. Then the following hold.
	\begin{itemize}
		\item[\it I)]
		$$\Sigma_{m,l}^\times = A\dot\times B$$
		is axial if and only if $A\subset \ES_n$ is an axial subgroup of the action of $\ES_n$ on $\R^{n-1}$, that is, $A=\ES_m\times\ES_{n-m}$, $1\leq m\leq \frac{n}{2}$, and $B\subset \ES_k$ is an axial subgroup of the action of $\ES_k$ on $\R^{k-1}$, that is, $B=\ES_l\times\ES_{k-l}$, $1\leq l\leq \frac{k}{2}$. The associated fixed point subspace is
	\end{itemize}
	\begin{align*}
	\Fix(A\dot\times B)=&\R\Bigg\{
	\bigg(\underbrace{v_l,\ldots,v_l}_{m\text{ times}},\underbrace{{\textstyle\frac{m}{m-n}}v_l,\ldots,{\textstyle\frac{m}{m-n}}v_l}_{n-m\text{ times}}\bigg)
	\Bigg\},
	\end{align*}
	where $v_l\in\Fix(B)$, that is, $v_l\in\R\Bigg\{\bigg(\underbrace{1,\ldots,1}_{l\text{ times}},\underbrace{{\textstyle\frac{l}{l-k}},\ldots,{\textstyle\frac{l}{l-k}}}_{k-l\text{ times}}\bigg)\Bigg\}$.\footnote{A case in which $A\dot\times B\supsetneq A\times B$ is for $n=k=4$, $m=l=2$, in which case the order-two element that simultaneously swaps the two blocks of agents and the two blocks of options is in $A\dot\times B$ (and clearly not in $A\times B$). }\vspace{4mm}
	
	Suppose that $\Sigma\in\ES_n\times\ES_k$ is axial and that
	\begin{equation}\label{EQ:graph representation}
	\Sigma=\{(\tau,\varphi(\tau)),\,\tau\in H\} 
	\end{equation}
	where $H\subset\ES_n$ is a subgroup and $\varphi:\ES_n\to\ES_k$ is a group homomorphism.
	\begin{itemize}
		\item[\it II)]
		If $\varphi(H)=\ES_s$, $2\leq s<k$, then necessarily $s=\frac{n}{m}$ and
		$$\Sigma^{\ES_s}=\underbrace{\ES_{m}\times\cdots\times \ES_{m}}_{s\text{ times}}\times\Z_2(\rho_m)\times\Z_s(\nu_m^{(s)}) $$
		and its fixed point subspace is
		$$\Fix(\Sigma)=\R\Bigg\{ 
		\Big(\underbrace{v_0,\ldots,v_0}_{m\text{ times}},\, \sigma_{(12\cdots s)}(\underbrace{v_0,\ldots,v_0}_{m\text{ times}}),\ldots,\,(\sigma_{(12\cdots s)})^{s-1}(\underbrace{v_0,\ldots,v_0}_{m\text{ times}})\Big)
		\Bigg\} $$
		where $v_0=\big( -(s-1),\underbrace{1,\ldots,1}_{s-1\text{ times}},\underbrace{0,\ldots,0}_{k-s\text{ times}}\big)$.
		
		\item[\it III)]
		If $\varphi(H)=\ES_k$, then
		$$\Sigma^{\ES_k}_m=\underbrace{\ES_m\times\cdots\times \ES_m}_{k\text{ times}}\times\Z_2(\rho_m)\times\Z_k(\nu_m^{(k)}) $$
		and its fixed point subspace is
		$$\Fix(\Sigma)=\R\Bigg\{ \hspace{-1mm}
		\bigg(\underbrace{v_1,\ldots,v_1}_{m\text{ times}},\sigma_{(12\cdots k)}(\underbrace{v_1,\ldots,v_1}_{m\text{ times}}),\ldots,(\sigma_{(12\cdots s)})^{k-1}(\underbrace{v_1,\ldots,v_1}_{m\text{ times}}),\hspace{-2mm}\underbrace{0,\ldots,0}_{n-km\text{ times}}\hspace{-2mm}\bigg)
		\hspace{-1mm}\Bigg\} $$
		where $v_1=(-(k-1),1,\ldots,1)\in\R^k$.
		
		\item[IV)] If $n,k\geq 3$ then all axial branches are unstable.
		
	\end{itemize}
\end{theorem}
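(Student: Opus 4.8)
The plan is to recast everything as a problem about invariant matrices and then split the analysis according to the second-factor projection of $\Sigma$. I would identify $W_d=V_n\otimes V_k$ with the space of real $n\times k$ matrices whose row and column sums all vanish, on which $(\sigma,\tau)\in\ES_n\times\ES_k$ acts by simultaneously permuting the rows by $\sigma$ and the columns by $\tau$. Then $\Fix(\Sigma)$ is the space of matrices left constant under all these relabelings, and axiality means this space is a line. I would organize the proof by whether $K:=\Sigma\cap(\ONE\times\ES_k)$ is trivial: product axials have $K\supseteq B\neq\ONE$ (Part I), whereas $K=\ONE$ is precisely the condition that $\Sigma$ be the graph $\{(\tau,\varphi(\tau)):\tau\in H\}$ of a homomorphism $\varphi\colon H\to\ES_k$, with $H$ the projection of $\Sigma$ to $\ES_n$ (Parts II--III).

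For Part I, I would use that the fixed space of a product factorizes, $\Fix_{V_n\otimes V_k}(A\times B)=\Fix_{V_n}(A)\otimes\Fix_{V_k}(B)$, so $\dim\Fix(A\times B)=\dim\Fix_{V_n}(A)\cdot\dim\Fix_{V_k}(B)$. By the characterization established just before the theorem, $A\dot\times B$ is axial exactly when $\dim\Fix(A\times B)=1$; a product of two nonnegative integers equals $1$ iff both equal $1$, i.e.\ iff $A$ and $B$ are axial for the standard $\ES_n$- and $\ES_k$-actions. By the classification of symmetric-group axials in~\eqref{EQ: generic consensus axial}--\eqref{EQ: na ko cons fixed} these are $A=\ES_m\times\ES_{n-m}$ and $B=\ES_l\times\ES_{k-l}$, and tensoring the two one-dimensional generators yields the stated block matrix. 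The passage from $A\times B$ to the possibly larger $A\dot\times B$ of the footnote does not alter the fixed line, so this completes Part I.

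Parts II--III are the core. Writing a matrix row-by-row as $(r_1,\dots,r_n)$ with $r_i\in V_k$, invariance under the graph becomes $r_{\tau(i)}=\varphi(\tau)\,r_i$ for all $\tau\in H$; equivalently, $\Fix(\Sigma)$ is the space of $H$-equivariant maps from the permutation module $\R^n$ to $V_k$ regarded as an $H$-module through $\varphi$. Splitting $\R^n=V_n\oplus\R$ and applying Frobenius reciprocity over the $H$-orbits $O$ on rows gives $\dim\Fix(\Sigma)=\sum_{O}\dim\Fix_{V_k}\!\big(\varphi(H_{x_O})\big)-\dim\Fix_{V_k}(\varphi(H))$, where each summand equals the number of column-orbits of $\varphi(H_{x_O})$ minus one and the subtracted term is $k-s$ when $\varphi(H)=\ES_s$ and $0$ when $\varphi(H)=\ES_k$. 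Setting the total equal to $1$ turns the classification into a counting identity; reading it off, there must be a single nontrivial row-orbit whose point-stabilizer maps onto $\ES_{s-1}$ (resp.\ $\ES_{k-1}$), plus, in the $\varphi(H)=\ES_k$ case, an orbit of null rows with stabilizer image $\ES_k$ contributing $0$. Since $\ker\varphi$ forces rows in a common orbit to be equal, this organizes the nonzero rows into equal blocks permuted as $\ES_s$ (resp.\ $\ES_k$), whence $H$ is the imprimitive wreath product $\ES_m\wr\ES_s$, the orbit-size bookkeeping gives $n=sm$ (so $s=n/m$) in Part II, and the generators $\rho_m,\nu_m^{(s)}$ and the fixed vectors $v_0,v_1$ come out exactly as stated. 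This is the $\ES_k$ analogue of the $k=3$ argument behind Theorem~\ref{THM: D3 axials}.

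Part IV is immediate: by Section~\ref{SSEC:quadratic equivariant} there is a nonzero $\Gamma$-equivariant quadratic on $W_d$ when $n,k\geq 3$, so~\cite[Theorem~2.14]{Golubitsky2002book} forces every Equivariant Branching Lemma branch to be unstable at bifurcation. The hard part will be the reconstruction in Parts II--III: the dimension identity only controls orbit counts and stabilizer images, so one must still rule out or absorb the boundary case of a second row-orbit, which the counting identity permits only when $s=k-1$, show that the point-stabilizer surjects onto $\ES_{s-1}$ rather than onto another index-$s$ subgroup, and confirm that the candidate $\Sigma$ is the full isotropy subgroup of its fixed vector (equivalently that $\ker\varphi=\ES_m^{\times s}$), uniqueness holding only up to conjugacy. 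This is exactly why the general $\ES_n\times\ES_k$ list is not claimed exhaustive over all possible images $\varphi(H)$.
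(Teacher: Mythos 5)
Your Parts I and IV coincide with the paper's own argument: Part I is the same tensor factorization of fixed-point subspaces (the paper's Lemma~\ref{P:AtimesB}, i.e.\ \cite[Lemma~3.1]{Dionne1996}) combined with the observation, established just before the theorem, that $\dim\Fix(A\dot\times B)=\dim\Fix(A\times B)$; Part IV is the same appeal to the quadratic equivariant of Section~\ref{SSEC:quadratic equivariant} and \cite[Theorem~2.14]{Golubitsky2002book}. For Parts II--III your route is genuinely different. The paper decomposes $\Sigma$ into cosets of $(L,\ONE)$ with $L=\ker\varphi$, invokes Lemma~\ref{LEM: ker is isotropy} to identify $L$ as a Young subgroup $\ES_{k_1}\times\cdots\times\ES_{k_r}$, and lets order-$2$ and order-$s$ coset representatives permute the equal-size blocks of $\Fix((L,\ONE))$, finishing by counting dimensions in the resulting isotropy conditions. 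You instead identify $\Fix(\Sigma)$ with $\mathrm{Hom}_H(\R^n,V_k)$ cut by the column-sum constraint and compute its dimension by Frobenius reciprocity. Your formula is correct: each row-orbit $O$ contributes the number of column-orbits of $\varphi(H_{x_O})$ minus one, and surjectivity of the row-sum map onto $\Fix_{V_k}(\varphi(H))$ justifies the subtracted term. It also recovers the forced bookkeeping more systematically than the paper's sketch does: vanishing of the columns outside the support of $\ES_s$, the null extra rows in case III, $n=sm$ in case II, and the absorption of the $s=k-1$ two-orbit boundary case into the product axials of Part I (the full stabilizer there is a product, so that case lands in Part I, as you suspected).

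The genuine gap is the step you call ``reading it off.'' The counting identity forces $\varphi(H_{x_O})$ to have exactly two orbits on the $s$ moved letters, but it does not force those orbits to have sizes $1$ and $s-1$ (what matters is the orbit count, not the index of the stabilizer image, so your phrase ``another index-$s$ subgroup'' misstates the alternative), nor does it force $H$ to preserve a block system with $s$ equal blocks; and this is not a removable technicality. Take $n=6$, $k=5$, and let $\Sigma\cong\ES_4$ be the graph of the $2$-subset action $\ES_4\hookrightarrow\ES_6$ paired with the standard inclusion $\ES_4=\ES_s\subset\ES_5$, $s=4=k-1$. The $6\times 5$ matrix whose rows are the six arrangements of $(1,1,-1,-1,0)$ lies in $W_d$; its full isotropy subgroup is exactly this $\Sigma$ (the zero column pins letter $5$, and distinct rows pin $\tau$ given $\beta$); and your own count gives $\dim\Fix(\Sigma)=2-1=1$, since $\varphi(H_{x_O})=\ES_2\times\ES_2$ has three column-orbits and $\varphi(H)=\ES_4$ has two. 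This is an axial of graph form with $\varphi(H)=\ES_s$, two stabilizer orbits of sizes $(2,2)$, and $s\nmid n$, so the wreath-product structure $H=\ES_m\wr\ES_s$ you intend to read off is simply not a consequence of the dimension identity. To be fair, the paper's proof of II--III is itself only a sketch by analogy with the $k=3$ case of Theorem~\ref{THM: D3 axials} (where ``two orbits on two letters'' leaves no freedom, so the issue is invisible) and glosses the same point; but your proposal, as written, stalls precisely at the theorem's substantive claim, and flagging the step, as you honestly do, does not discharge it --- it would need either an additional group-theoretic argument excluding non-trivially-imprimitive actions of $H$ on the rows, or a restriction of the statement's hypotheses.
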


Theorem~\ref{THM:general axials} provides a first list of axial subgroups for $\Gamma=\ES_n\times\ES_k$ acting on $W_d$. All axial subgroup listed in Theorems~\ref{THM: Aronson} and~\ref{THM: D3 axials} are special cases of those listed in Theorem~\ref{THM:general axials}. It remains an open question whether the list of axial subgroups in Theorem~\ref{THM:general axials} is complete. The key point in proving exhaustiveness of the axial list for $k=3$ was Lemma~\ref{lem: non product axials proj ker}, which shows that the graph representation condition~\eqref{EQ:graph representation} is satisfied. We still do not know whether the same condition is true for a general $k$.

\section{Conclusions and future directions}
\label{sec:conclusions}

We developed a new theoretical framework to describe, analyze, and predict the dynamics of opinion formation for an arbitrary number of agents and an arbitrary number of options. Our approach is model-independent and thus suitable to embrace a rich variety of biological, social, and artificial systems under the mild empirical hypothesis that they approximately satisfy suitable symmetry assumptions. The symmetry assumptions we rely on capture the situation in which agents are equal in the decision process and options are {\it a priori} equally valuable. This idealized situation is maximally symmetric and reveals all the richness of the possible opinion-formation behaviors: consensus, dissensus, their co-existence, and the emergence of prototypical types of dissensus, namely, uniform and moderate/extremist. It further reveals that
transitions between these behaviors are ruled by a small number of parameters. The cooperative or competitive nature of agent interactions is particularly important and tiny changes in the balance between agents' cooperativity and competitivity can lead to dramatic changes in their collective opinion behavior.

The key to developing our theory is equivariant bifurcation theory, which provides the model-independent tools to analyze the full richness of opinion-formation dynamics without fixing any specific set of equations. At the same time, our analysis of the specific symmetries of opinion-forming dynamics provides novel results in equivariant bifurcation theory, namely the classifications of all the axial subgroups of the action of $\ES_n \times \ES_3$ on $\R^{n-1}\otimes\R^{2}$ and the full classification of dissensus behaviors in the case of three options and arbitrary number of agents. Moreover, the vector field realization of the model-independent theory, presented in the companion paper~\cite{Bizyaeva2020}, was guided by the handful of cases in which equivariant singularity theory, and thus equivariant normal forms, have been developed, which further stress the importance of these tools for modeling high-dimensional complex behaviors like collective opinion formation.

\appendix

\section{Proofs of the main results}\label{SupSEC: proofs}

\begin{proof}[Proof of Theorem~\ref{THM: D3 axials}]
	Notice that all the subgroups listed in the statement of the theorem are axial subgroups for the given action, as evident from the associated fixed point subspace. Item {\it IV)} is a direct consequence of the existence of a dissensus quadratic equivariant~\cite[Theorem~2.14]{Golubitsky2002book} (see also~\cite[Theorem~XIII.4, page~90]{Golubitsky1985-2}). The rest of the proof aims at showing that no other axial subgroups exist.

	{\it I)}
	We begin by proving that, modulo conjugacy, if an axial subgroup $\Sigma$ satisfies $\Sigma=A\times B$ with $A\subset \ES_n$ and $B\subset \ES_3$, then necessarily $\Sigma=\Sigma^{\times}_m$ for some $m$. Up to conjugacy, $B$ is either $\ONE$, $\Z_2(\kappa)$, $\Z_3(\theta)$, or $\ES_3$. If $B=\ONE$, then it follows by \cite[Lemma~3.1]{Dionne1996} that $\Fix(\Sigma)=\Fix_{V_\Na}(A)\otimes V_3$ is even dimensional; and $\Sigma$ is not axial. If $B=\Z_3(\theta)$ or $B=\ES_3$, then $(\ONE,\theta)\in\Sigma$ and, invoking again \cite[Lemma~3.1]{Dionne1996}, 
	\begin{equation} \label{e:fix_theta}
	\Fix(\Sigma)\subset \Fix(\Z_3(\ONE,\theta))= V_\Na\otimes\Fix_{V_3}(\Z_3(\theta))=V_\Na\otimes\{0\}=\{0\}. 
	\end{equation}
	Hence, $\Sigma$ is not axial. If $B=\Z_2(\kappa)$, then
	$$\Fix_{V_3}(B)=\{y\in V_{3}:\, y_1=y_2 \}\cong\R$$
	and thus $\Fix(\Sigma)=\Fix_{V_\Na}(A)\otimes\R\cong \Fix_{V_\Na}(A)$, $A\subset\ES_n$. So $\Sigma$ is axial if $A$ is an axial subgroup of  $\ES_n$ acting on $V_\Na\cong\R^{n-1}$. Axial subgroups of $\ES_n$ acting on $V_\Na\cong\R^{n-1}$ are known~\cite{Field1989,Aronson1991} and are exactly of the form $A=\ES_m\times \ES_{n-m}$, for $1\leq m\leq n/2$.
	
	{\it II), III)} We now prove that any other axial subgroup is either of the form $\Sigma^{\Z_2}$ or $\Sigma^{\ES_3}$.
	Let $\Pi: \ES_n\times\ES_3\to \ES_n$ be the projection homomorphism. Then $H= \Pi(\Sigma)$ is a subgroup of $\ES_n$ for any axial subgroup $\Sigma$. We need two lemmas, whose proof is provided below.
	
	\begin{lemma}\label{lem: non product axials proj ker}
		Suppose $\Sigma\subset \ES_n\times \ES_3$ is axial with respect to the given action and that, up to conjugacy, $\Sigma\neq \Sigma^\times_m$, for any $1\leq m\leq n/2$. Then $\ker(\Pi|_\Sigma)=(\ONE,\ONE)$,
		and 
		\begin{equation}\label{EQ: general non product axial}
		\Sigma = \{(\tau,\varphi(\tau)) : \tau\in H\}
		\end{equation}
		where $\varphi:H\to\ES_3$ is a group homomorphism. Morever, up to conjugacy, $\varphi(H)$ can neither be $\ONE$ nor $\Z_3(\theta)$.
	\end{lemma}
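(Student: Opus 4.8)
The plan is to treat the kernel $K := \ker(\Pi|_\Sigma) = \{(\ONE,\psi)\in\Sigma\}$ first, since both the graph representation and the homomorphism $\varphi$ drop out immediately once $K$ is shown to be trivial. For the first assertion I would argue the contrapositive: if $K\neq(\ONE,\ONE)$, then $\Sigma$ is conjugate to some $\Sigma^\times_m$, contradicting the hypothesis. The first step is to identify which $K$ are possible. If $(\ONE,\psi)\in\Sigma$ with $\psi$ of order three, then by \cite[Lemma~3.1]{Dionne1996} and $\Fix_{V_3}(\theta)=\{0\}$ we get $\Fix(\Sigma)\subseteq\Fix_{W_d}(\ONE,\psi)=V_n\otimes\Fix_{V_3}(\psi)=\{0\}$, contradicting axiality; so every nontrivial element of $K$ is a transposition, and $K=\Z_2(\kappa)$ for a single $\kappa$ (two distinct transpositions generate an order-three element).

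The second step uses normality of $K$ in $\Sigma$. For each $(\tau,\sigma)\in\Sigma$ one has $\sigma\kappa\sigma^{-1}\in K$, which forces $\sigma$ into the normalizer $N_{\ES_3}(\Z_2(\kappa))=\Z_2(\kappa)$, since transpositions are self-normalizing in $\ES_3$. Hence $\Sigma\subseteq\ES_n\times\Z_2(\kappa)$. After conjugating so that $\kappa=(1\,2)$, the group $\Z_2(\kappa)$ acts trivially on the line $\R\{v_3\}=\Fix_{V_3}(\kappa)$ with $v_3=\left(-\frac12,-\frac12,1\right)$, so from $(\ONE,\kappa)\in\Sigma$ we get $\Fix(\Sigma)\subseteq V_n\otimes\R\{v_3\}$, on which every $(\tau,\sigma)\in\Sigma$ acts as $\tau\otimes\ONE$. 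Therefore $\Fix(\Sigma)=\Fix_{V_n}(H)\otimes\R\{v_3\}$ with $H=\Pi(\Sigma)$, and axiality gives $\dim\Fix_{V_n}(H)=1$, so $H=\ES_m\times\ES_{n-m}$ is a standard $\ES_n$-axial subgroup. Finally, because $K=\Z_2(\kappa)$ is a full fiber, each $\tau\in H$ lifts to both $(\tau,\ONE)$ and $(\tau,\kappa)$, whence $\Sigma=H\times\Z_2(\kappa)=\Sigma^\times_m$, the desired contradiction. The graph representation \eqref{EQ: general non product axial} then follows at once: $\Pi|_\Sigma\colon\Sigma\to H$ is an isomorphism and $\varphi$ is its composition with the second projection $\ES_n\times\ES_3\to\ES_3$.

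For the last assertion I would rule out $\varphi(H)=\ONE$ and $\varphi(H)=\Z_3(\theta)$ by parity of $\dim\Fix(\Sigma)$. The case $\varphi(H)=\ONE$ means $\Sigma=H\times\ONE$, so $\Fix(\Sigma)=\Fix_{V_n}(H)\otimes V_3$ is even-dimensional; this is exactly the $B=\ONE$ situation already disposed of in item~I). For $\varphi(H)=\Z_3(\theta)$, the elements $(\tau,\ONE)$ with $\tau\in\ker\varphi$ force $\Fix(\Sigma)\subseteq U\otimes V_3$, where $U=\Fix_{V_n}(\ker\varphi)$, and on $U\otimes V_3$ the quotient $H/\ker\varphi\cong\Z_3$ acts diagonally through a generator $g=\bar\tau_0\otimes\theta$. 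Complexifying, $\theta|_{V_3}$ has eigenvalues $\omega,\omega^2$ and no eigenvalue $1$, while $\bar\tau_0$ has its $\omega$- and $\omega^2$-eigenspaces of equal (conjugate) dimension $b$; the eigenvalue-$1$ space of $g$ is then $2b$-dimensional, so $\dim_\R\Fix(\Sigma)=2b$ is even. In neither case can $\Fix(\Sigma)$ be one-dimensional.

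I expect the main obstacle to be the reconstruction in the second paragraph, namely upgrading $\Sigma\subseteq\ES_n\times\Z_2(\kappa)$ to the full product $\Sigma^\times_m$ via the self-normalizing property and the full-fiber argument, together with the evenness computation excluding $\Z_3(\theta)$, where one must pass to $U=\Fix_{V_n}(\ker\varphi)$ and count cube-root-of-unity eigenvalues to conclude that $\dim\Fix(\Sigma)$ is even. The remaining steps are routine consequences of $\Fix(A\times B)=\Fix_{V_n}(A)\otimes\Fix_{V_3}(B)$ and the known classification of $\ES_n$-axials.
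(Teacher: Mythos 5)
Your proof is correct, and while it follows the same overall skeleton as the paper's (order-three kernel elements are killed by $\Fix_{V_3}(\theta)=\{0\}$ via \eqref{e:fix_theta}; a trivial kernel yields the graph representation by the First Isomorphism Theorem; the images $\ONE$ and $\Z_3(\theta)$ are excluded by parity of $\dim\Fix(\Sigma)$), two of your sub-arguments take a genuinely different and cleaner route. Where the paper, in the case $(\ONE,\kappa)\in\Sigma$, splits according to whether some $(\tau,\theta)$ lies in $\Sigma$ and kills that sub-case by the explicit coordinate computation $(z_{i1},z_{i1},-2z_{i1})=(-2z_{\tau(i)1},z_{\tau(i)1},z_{\tau(i)1})$, you instead use normality of $\ker(\Pi|_\Sigma)$ in $\Sigma$ together with the fact that $\Z_2(\kappa)$ is self-normalizing in $\ES_3$ to obtain $\Sigma\subseteq\ES_n\times\Z_2(\kappa)$ in one structural stroke; this buys uniformity (no case split) at essentially no cost. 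Likewise, for $\varphi(H)=\Z_3(\theta)$ the paper merely asserts that the coordinate isotropy conditions have an even-dimensional solution space, whereas your complexification argument --- pairing the $\omega$- and $\omega^2$-eigenspaces of the real operator $\bar\tau_0$ against the eigenvalues of $\theta$ on $(V_3)_\C$ --- actually proves the evenness, and is if anything more complete than the paper's version. One small repair is needed in your reconstruction step: the inference ``$\dim\Fix_{V_n}(H)=1$ implies $H=\ES_m\times\ES_{n-m}$'' is false for arbitrary subgroups (e.g., $\langle(1\,2)(3\,4)\rangle\subset\ES_4$ has a one-dimensional fixed subspace in $V_4$ but is not of that form); you must use that $\Sigma$ is an isotropy subgroup. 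Writing $\Fix(\Sigma)=\R\{w\otimes v_3\}$, your own full-fiber observation shows that $H=\Pi(\Sigma)$ coincides with the $\ES_n$-stabilizer of $w$, and an isotropy subgroup of $\ES_n$ on $V_n$ with one-dimensional fixed subspace is indeed of the form $\ES_m\times\ES_{n-m}$. Since all the needed ingredients already appear in your write-up, this is a one-line reordering rather than a gap, and the same compression is arguably present in the paper's own item~{\it I)}.
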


	Let $L=\ker\,\varphi\subset H\subset \ES_n$. Then
	\begin{equation}\label{EQ: isotropy quotients}
	\Sigma/(L,\ONE)\cong H/L\cong \varphi(H).
	\end{equation}
	\begin{lemma}\label{LEM: ker is isotropy}
		If $\Sigma$ is an isotropy subgroup then $L$ is an isotropy subgroup.
	\end{lemma}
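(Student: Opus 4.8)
The plan is to realize $L$ explicitly as the stabilizer of a single point by computing the subgroup $\Sigma\cap(\ES_n\times\{\ONE\})$ in two different ways and matching the answers. Since $\Sigma$ is an isotropy subgroup and is axial, it is the isotropy subgroup $\Sigma=\Sigma_w$ of a nonzero generator $w$ of its fixed-point line, $\Fix(\Sigma)=\R\{w\}$. Writing $w=(w_1,\ldots,w_n)$ with each $w_i\in V_3$ and $w_1+\cdots+w_n=0$ (the defining conditions of $W_d=V_n\otimes V_3$), I will exploit the explicit permutation action $((\sigma,\phi)w)_i=\phi\,w_{\sigma^{-1}(i)}$ to read off which pure agent-permutations fix $w$.

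First I would use the graph structure from Lemma~\ref{lem: non product axials proj ker}. Since $\Sigma=\{(\tau,\varphi(\tau)):\tau\in H\}$ with $H=\Pi(\Sigma)$ and $L=\ker\varphi$, an element $(\sigma,\phi)\in\Sigma$ has $\phi=\ONE$ exactly when $\sigma\in H$ and $\varphi(\sigma)=\ONE$, i.e. when $\sigma\in L$. Thus $\Sigma\cap(\ES_n\times\{\ONE\})=\{(\sigma,\ONE):\sigma\in L\}$, which I abbreviate $(L,\ONE)$. Second, I would compute the same intersection geometrically from $\Sigma=\Sigma_w$: an element $(\sigma,\ONE)$ fixes $w$ iff $w_{\sigma^{-1}(i)}=w_i$ for all $i$, i.e. iff $\sigma$ permutes the agent indices within the classes of the partition $P$ of $\{1,\ldots,n\}$ defined by equality of the values $w_i\in V_3$. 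The set of all such $\sigma$ is precisely the Young subgroup $\ES_P=\prod_k\ES_{|P_k|}$ associated to that partition.

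Comparing the two computations gives $L=\ES_P$. Since a Young subgroup $\ES_P$ is manifestly the $\ES_n$-isotropy subgroup of any point of $V_n$ whose coordinates are constant on each block of $P$ and take distinct values on distinct blocks, this shows $L$ is an isotropy subgroup for the action of $\ES_n$ on $V_n\cong\R^{n-1}$ (equivalently, in the language used earlier, the recipe of~\eqref{EQ: normal direct product} makes $(L,\ONE)$ an isotropy subgroup of $\Gamma$). I would note in passing that $w\neq0$ forces $P$ to have at least two blocks: if all $w_i$ were equal, then $\sum_i w_i=0$ would give $w=0$, contradicting axiality, so $\ES_P$ is a genuine (proper) Young subgroup, consistent with the later use that $L$ is a product of symmetric groups.

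The step requiring the most care — and really the only conceptual content — is the correct identification of the group being intersected and the double description of $\Sigma\cap(\ES_n\times\{\ONE\})$: the graph side produces $(L,\ONE)$ purely from the definition $L=\ker\varphi$, while the stabilizer side produces the Young subgroup $\ES_P$ purely from the explicit action on $w$. Everything else (that $w$ is nonzero, that $\ES_P$ is an isotropy subgroup of $\ES_n$ on $V_n$, that such subgroups are exactly the Young subgroups, cited already via~\cite{Field1989,Aronson1991}) is routine. There is no hard estimate or genericity argument hidden here; the whole point is that being an isotropy subgroup of $\Sigma$ transfers to $L$ through the kernel of the projection to the option factor.
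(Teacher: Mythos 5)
Your proof is correct and is essentially the paper's own argument: both identify $L$ as the $\ES_n$-stabilizer of a nonzero point of $\Fix(\Sigma)$, by noting that the graph representation gives $\Sigma\cap(\ES_n\times\{\ONE\})=(L,\ONE)$ while $\Sigma=\Sigma_w$ makes that same intersection the stabilizer of $w$ under pure agent permutations. Your explicit identification of $L$ with the Young subgroup $\ES_P$ (and the remark that $w\neq 0$ plus the zero-sum condition forces at least two blocks) merely spells out the step the paper takes immediately after the lemma, where it writes $L=\ES_{k_1}\times\cdots\times\ES_{k_s}$ citing the known classification of isotropy subgroups of $\ES_n$ on $\R^{n-1}$.
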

	
	It follows by Lemma~\ref{LEM: ker is isotropy} that $L$ is a an isotropy subgroup of $\ES_n$ acting on $\R^{n-1}$, that is~\cite{Field1989,Aronson1991}, $L=\ES_{k_1}\times\cdots\times \ES_{k_s}$ and thus
	\begin{equation}\label{EQ: Fix Ker}
	\Fix((L,\ONE))=\{(\bar v_1,\ldots,\bar v_1,\ldots,\bar v_s,\ldots,\bar v_s),\, \bar v_i\in V_3\},
	\end{equation}
	where the $i$-th block has dimension $k_i$. Notice also that $L$ is a normal subgroup of $H$, that is, $hLh^{-1}=L$ for all $h\in H$, because kernels of group homomorphisms are always normal\footnote{Suppose $G$ is a group and $\varphi:G\to G'$ is a group homomorphism. Suppose $x\in\ker, \varphi$, that is $\varphi(x)=e$. Then, given $y\in G$, we have $\varphi(yxy^{-1})=\varphi(y)\varphi(x)\varphi(y^{-1})=\varphi(y)\varphi(y)^{-1}=e$, that is, $yxy^{-1}\in\ker\,\varphi$.}.\\
	
	We now conclude the proof in for case II., i.e., $\varphi(H)=\Z_2(\kappa)$, which will provide the axial conjugacy class $\Sigma^{\Z_2}$. If $\varphi(H)=\Z_2(\kappa)$, then~\eqref{EQ: isotropy quotients} implies that $L$ has index two (i.e., the order of $\varphi(H)$) in $H$ and $(L,\ONE)$ has index two in $\Sigma$. It follows that there must exist $h\in H$, with $h^{2}\in L$, such that 
	$$\Sigma=(L,\ONE)\cup (hL,\kappa).$$
	The condition $h^2\in L$ follows by the fact that $\kappa^2=\ONE$ and thus, if $a$ is in the coset $hL$, then $(a,\kappa)^2=(a^2,\ONE)$ and $a^2$ must be in the coset $L$. Because $h$ commutes with $L$, it must swap equally sized pairs of blocks in~\eqref{EQ: Fix Ker} and up to conjugacy by elements of $L$ we can take $h$ to be of order two\footnote{Indeed, let $B_i$ be the set of coordinate indexes in the $i$-th block. Then $L(B_i)=B_i$ for all $i$, and because the permutation action is transitive no other $L$-invariant coordinate index set exists except the $B_i$'s. It follows that $L(h(B_i))=h(L(B_i))=h(B_i)$, that is $h(B_i)=B_j$ for some $j$. Moreover $h^2(B_i)=B_i$, because otherwise $h^2\notin L$}. The resulting isotropy conditions read
	$$(\bar v_{i1},\bar v_{i2},-(\bar v_{i1}+\bar v_{i2}))=(\bar v_{i2},\bar v_{i1},-(\bar v_{i1}+\bar v_{i2}))$$
	for each block that is not swapped by $h$ and 
	$$(\bar v_{i1},\bar v_{i2},-(\bar v_{i1}+\bar v_{i2}))=(\bar v_{j1},\bar v_{j2},-(\bar v_{j1}+\bar v_{j2}))$$
	for each pair of blocks that are swapped by $h$. Suppose there are $r\leq s/2$ pairs of blocks which are swapped by $h$ and $(s-2r)$ blocks which are fixed by $h$. Each block that is not swapped count for one degree of freedom. Each pair of swapped blocks count for two degrees of freedom. Furthermore, we must impose the disagreement condition condition $k_1\bar v_1+\cdots+k_s \bar v_s=0$, which only eliminates one degree of freedom because of the isotropy conditions. It follows that $\dim\Fix(\Sigma)=(s-2r)+2r-1=s-1$ and thus $s=2$, and $r=0$ or $r=1$. If $r=0$, then $h=\ONE$ and we go back to the conjugacy class of product axials $\Sigma^\times_{\frac{n}{2}}$. If $r=1$ we obtain the conjugacy class $\Sigma^{\Z_2}$.
	
	To conclude the proof for case III., i.e., $\varphi(H)=\ES_3$, we proceed similarly. By~\eqref{EQ: isotropy quotients}, there exist $h_1,h_2\in H$ such that
	$$\Sigma=(L,\ONE)\cup (h_1L,\kappa)\cup (h_2L,\theta),$$
	with $h_1^{2}\in L$, $h_2^2\in H-L$, and $h_2^3\in  L$. That is, $h_1$ swaps pairs of blocks and $h_2$ cycles triplets of blocks. Up to conjugacy by elements of $L$ we can take $h_1$ to be of order two and $h_2$ to be of order three. Any block which is not cycled by $h_2$ is zero because the isotropy condition
	$$(\bar v_{i1},\bar v_{i2},-(\bar v_{i1}+\bar v_{i2}))=(-(\bar v_{i1}+\bar v_{i2}),\bar v_{i1},\bar v_{i2})$$
	implies $\bar v_{ip}=0$, $p=1,2$. Imposing the isotropy condition associated to the element $(h_2,\theta)$ on a block triplet defined by $\bar v_i,\bar v_j,\bar v_l$ that is cycled by $h_2$ implies
	\begin{align}\label{EQ: h2 theta isotropy}
	\bar v_{i1}&=-(\bar v_{l1}+\bar v_{l2})=\bar v_{j2}\nonumber\\
	\bar v_{i2}&=\bar v_{l1}=-(\bar v_{j1}+\bar v_{j2})\\
	-(\bar v_{i1}+\bar v_{i2})&=\bar v_{l2}=\bar v_{j1}\nonumber
	\end{align}
	where, of course, the three blocks have the same size. If $h_1$ does not swap at least two of the three blocks cycled by $h_2$, then the isotropy condition associated to $\kappa$, i.e., $\bar v_{p1}=\bar v_{p2}$, $p=i,j,l$, implies that the whole block triplet is zero. Any block triplet which is cycled by $h_2$ and in which $h_1$ swaps two blocks  (up to conjugacy we can take them to be the last two), contribute one degree of freedom to the fixed point subspace. Indeed, the isotropy condition associated to $(h_1,\kappa)$ reads $\bar v_{i1}=\bar v_{j2},\bar v_{i2}=\bar v_{j2},\bar v_{l1}=\bar v_{l2}$, which, together with~\eqref{EQ: h2 theta isotropy} implies $v_i=c\left(1,{\textstyle-\frac{1}{2}},{\textstyle-\frac{1}{2}}\right), v_j=c\left({\textstyle-\frac{1}{2}},1,{\textstyle-\frac{1}{2}}\right), v_l=c\left({\textstyle-\frac{1}{2}},{\textstyle-\frac{1}{2}},1\right)$, $c\in \R$. If $\Sigma$ is axial, then there exists only one such block triplet and the conjugacy class $\Sigma^{\ES_3}$ follows.
\end{proof} 

\begin{proof}[Proof of Lemma~\ref{lem: non product axials proj ker}]
	The elements of $\ker(\Pi|_\Sigma)$ are of the form $(\ONE,\alpha)$ with $\alpha\in\ES_3$. Using the fact that conjugation leaves the kernel invariant, to prove the first part of the statement it suffices to exclude that $(\ONE,\theta)\in\Sigma$ and $(\ONE,\kappa)\in\Sigma$, because all non-trivial elements of $\ES_3$ are either conjugate to $\theta$ or $\kappa$.
	
	If $(\ONE,\theta)\in\Sigma$ then it follows by~\eqref{e:fix_theta} that $\Sigma$ is not axial. 
	
	If $(\ONE,\kappa)\in\Sigma$ two cases are possible: either $(\tau,\theta)\not\in\Sigma$ for any $\tau\in \ES_n$ or $(\tau,\theta)\in\Sigma$ for some $\tau\in \ES_n$. The former can be excluded because in that case $\Sigma = A\times\Z_2(\kappa)$ for some $A\subset \ES_n$, and it follows as in {\it I)} that $\Sigma=\Sigma^\times_m$ for some $1\leq m< n/2$. The second case can also be excluded because, if $(\ONE,\kappa)\in\Sigma$, then
	$$\Fix(\Sigma)\subset V_\Na\otimes \{y\in V_{3}:\, y_1=y_2 \},$$
	and the generic isotropy condition imposed by the element $(\tau,\theta)$ reads $(z_{i1},z_{i1},-2z_{i1})=(-2z_{\tau(i)1},z_{\tau(i)1},z_{\tau(i)1})$, which implies $z_{i1}=z_{\tau(i)1}=0$ and thus $\Zz_j=0$ for all $j$. Therefore $\Sigma$ is not axial.
	
	It follows by the First Isomorphism Theorem that $\Sigma/\ker(\Pi|_\Sigma)=\Sigma$ is isomorphic to $H$, and therefore~\eqref{EQ: general non product axial} holds. Up to conjugacy, the image $\varphi(H)$ can be either $\ONE$, $\Z_2(\kappa)$, $\Z_3(\theta)$, or $\ES_3$. If $\varphi(H)=\ONE$, then $\Sigma=A\times \ONE$, with $A\subset \ES_n$, a case that was already excluded. If $\varphi(H)=\Z_3(\theta)$, then $\Sigma$ only contains elements of $\ES_n$ and elements of the form $(\tau,\theta)$, for some $\tau\in \ES_n$. The associated isotropy conditions take the form $(z_{i1},z_{i2},-(z_{i1}+z_{i2}))=(-(z_{\tau(i)1}+z_{\tau(i)2}),z_{\tau(i)1},z_{\tau(i)2})$ whose solution space in even dimensional
	and therefore $\Sigma$ cannot be axial.
\end{proof}

\begin{proof}[Proof of Lemma~\ref{LEM: ker is isotropy}]
	Let $x$ be a vector which is fixed by $\Sigma$. Then, in particular, $x$ is fixed by $L$, because $(L,\ONE)\subset\Sigma$. Let $\gamma$ be an element of $\ES_n$ which also fixes $x$. Then $(\gamma,\ONE)$ must be in $\Sigma$ because $\Sigma$ is an isotropy subgroup, and therefore $\gamma$ is in $\ker\,\varphi=L$.
\end{proof}

\begin{proof}[Proof of Theorem~\ref{THM:general axials}]
	We first prove Item~I. of the statement and then sketch the proof for  Items~II. and~III.
	
	{\it I)} The following lemma follows from Lemma~3.1 in~\cite{Dionne1996}
	\begin{lemma} \label{P:AtimesB}
		Let $A\subset\ES_n$ and $B\subset\ES_k$ be subgroups.  Then 
		\[
		\dim_\WW(\Fix(A\times B)) = 1 
		\]
		if and only if
		\[
		\dim_{\R^{n-1}}(\Fix(A)) = 1 \qquad \mbox{and} \qquad  \dim_{\R^{k-1}}(\Fix(B)) = 1
		\]
	\end{lemma}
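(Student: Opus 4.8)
The plan is to recognize this lemma as a direct specialization of the standard factorization of fixed-point subspaces under a tensor-product action, which is the content of Lemma~3.1 in~\cite{Dionne1996}. Here $\WW=W_d=V_n\otimes V_k$ and $A\times B$ acts by $(a,b)(v\otimes w)=(av)\otimes(bw)$, with $A\subset\ES_n$ acting on the first factor $V_n\cong\R^{n-1}$ and $B\subset\ES_k$ acting on the second factor $V_k\cong\R^{k-1}$, exactly as specified in Table~\ref{TAB: dissensus data NEW}. The single fact I would extract is
\[
\Fix_{V_n\otimes V_k}(A\times B)=\Fix_{V_n}(A)\otimes\Fix_{V_k}(B).
\]

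To justify this factorization I would use the averaging (Reynolds) projections. Since $A$ and $B$ are finite, the orthogonal projections onto the respective fixed subspaces are the group averages $P_A=\frac{1}{|A|}\sum_{a\in A}a$ and $P_B=\frac{1}{|B|}\sum_{b\in B}b$, with $\mathrm{im}\,P_A=\Fix_{V_n}(A)$ and $\mathrm{im}\,P_B=\Fix_{V_k}(B)$. The projection onto the fixed subspace of $A\times B$ on the tensor product is then
\[
P_{A\times B}=\frac{1}{|A|\,|B|}\sum_{a\in A}\sum_{b\in B}a\otimes b=\left(\frac{1}{|A|}\sum_{a\in A}a\right)\otimes\left(\frac{1}{|B|}\sum_{b\in B}b\right)=P_A\otimes P_B,
\]
and the image of a tensor product of projections is the tensor product of the images, which gives the displayed factorization. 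Passing to dimensions and using $\dim(U\otimes U')=\dim U\cdot\dim U'$ yields
\[
\dim_\WW\Fix(A\times B)=\dim_{\R^{n-1}}\Fix(A)\cdot\dim_{\R^{k-1}}\Fix(B).
\]

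The equivalence is now purely arithmetic: both factors on the right are non-negative integers, and a product of non-negative integers equals $1$ if and only if each factor equals $1$. This settles both implications at once. There is essentially no hard step here; the only thing that requires care is verifying that the action on $W_d$ really is the external tensor product of the $\ES_n$-action on $V_n$ with the $\ES_k$-action on $V_k$, so that the factorization of projections applies verbatim. Once that is in place, the lemma is immediate, and it is precisely what is needed to reduce the axiality criterion in Item~I of Theorem~\ref{THM:general axials} to the known axial subgroups of $\ES_n$ on $\R^{n-1}$ and of $\ES_k$ on $\R^{k-1}$.
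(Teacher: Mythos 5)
Your proposal is correct and takes essentially the same route as the paper: the paper's entire proof is to invoke the factorization $\Fix_{V_n\otimes V_k}(A\times B)=\Fix_{V_n}(A)\otimes\Fix_{V_k}(B)$ from Lemma~3.1 of \cite{Dionne1996} and read off the equivalence from multiplicativity of dimension over the tensor product. The only difference is that you additionally supply a self-contained averaging-projection proof (via $P_{A\times B}=P_A\otimes P_B$) of the factorization the paper merely cites, which is a valid justification and changes nothing in the argument's structure.
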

	If $A$ and $B$ are axial subgroups, then $\dim\Fix_\WW(A\times B)=1$ and Proposition~\ref{P:AtimesB} implies that $A\dot\times B$ is axial.  Conversely, if $A\dot\times B$ is axial, then 
	$\dim\Fix_\WW(A\dot\times B)= \dim\Fix_\WW(A\times B)$ and Lemma~\ref{P:AtimesB} implies that $A$ and $B$ are axial. 
	
	{\it II), III)}
	Under the hypothesis that the axial subgroup can be written as the graph of a homomorphism~\eqref{EQ:graph representation}, we can find its general form by following the same ideas as the case $k=3$ above. If  $\varphi(H)=\ES_s$, $2\leq s\leq k$, then $L=\ker\varphi$ has index $s!$ (i.e., the order of $\varphi(H)$) in $H$ and $(L,\ONE)$ has index $s!$ in $\Sigma$. It follows that, up to conjugacy, there must exist and order two element $h_1\in H$ and an order $s$ element $h_2\in H$ such that
	$$\Sigma=(L,\ONE)\cup (h_1L,\sigma_{(12)})\cup (h_2L,\sigma_{(12\cdots s)}).$$
	Moreover, invoking Lemma~\ref{LEM: ker is isotropy}, $L=\ES_{k_1}\times\cdots\times \ES_{k_r}$ with $$ \Fix(L)=\{(c_1,\ldots,c_1,\ldots,c_r,\ldots,c_r),\, c_i\in V_k\},$$
	and $h_1$ and $h_2$ must permute (equally-sized) blocks of vectors in $\Fix(L)$ because, since $L$ is normal, they both commute with $L$. By counting dimensions in the resulting isotropy conditions the result follows.
\end{proof}

\section{Algebraic for axial subgroup computation}

\afterpage{%
	
	\clearpage
	
	\thispagestyle{empty}
	
	\begin{landscape}

		\begin{table}
			
			\small
			\centering
			
			\begin{tabular}{l l l l c c c}
				
				\hline
				
				
				$\Na$ & $\No$ & $W_c$ & Action of $\Gamma$ & Axial $\Sigma$ & $\Fix(\Sigma)$ & Reference \Bstrut\Tstrut \\

				\hline
				
				3 & 2 & $\R\{(1,1,1)\}$ & $\Gamma = \Z_2(\tau)$ & $\ONE$ & $W_c$ & \cite[Chapter~VI]{Golubitsky1985} \Tstrut \\
				
				& & & $\tau \Zz=-\Zz$ & & \Bstrut\\

				\hline
				
				2 & 3 & $\R\{u_1,u_2\}$ & $\Gamma = \ES_3(\kappa,\rho)$ & $\Z_2(\kappa)$  & \eqref{EQ: 2a 3o cons fixed} & \cite[Section~XIII.5]{Golubitsky1985-2} \Tstrut \\
				
				& & $u_i = (v_i,v_i)\in (\R^3)^2 $ & $\kappa\Zz=(\kappa\Zz_1,\kappa\Zz_2)$ & &\\
				
				& & & $\rho\Zz=(\rho\Zz_1,\rho\Zz_2)$ & & \\
				
				& & &  $\kappa\Zz_i=(z_{i2},z_{i1},z_{i3})$  \\

				& & & $\rho\Zz_i=(z_{i3},z_{i1},z_{i2})$

				\Bstrut\\

				\hline
				
				$n$ & $k$ & $\R\{u_1,\ldots,u_{k-1}\}$ & $\Gamma = \ES_k(\kappa,\rho)$ & $\Sigma_p=\ES_p\times\ES_{k-p}$ & \eqref{EQ: na ko cons fixed} & \cite{Elmhirst2004} \Tstrut \\
				
				& & $u_i=(v_i,\ldots,v_i)\in (\R^k)^n$ & $\kappa\Zz=(\kappa\Zz_1,\ldots,\kappa\Zz_n)$ &  $1\leq p\leq\lfloor\frac{k}{2}\rfloor$ \\
				
				& &  & $\rho\Zz=(\rho\Zz_1,\ldots,\rho\Zz_n)$  &          \eqref{EQ: generic consensus axial}  \\
				
				& &   &  $\kappa\Zz_i=(z_{i2},z_{i1},z_{i3},\ldots,z_{ik})$  &  \\       
				
				& & & $\rho\Zz_i=(z_{ik},z_{i1},z_{i2},\ldots,z_{i(k-1)})$ & \Bstrut \\
				
				\hline\vspace{-3mm}
				
			\end{tabular}
			
			\caption{Algebraic data for consensus opinion formation. In the table, $v_1,\ldots,v_{k-1}\in\R^k $ denotes a basis of $V_k$, as defined in~\eqref{EQ: zero sum subspace}. Algebraic data whose expression was too big to be put in the table is defined in the text at the given reference. }\label{TAB: consensus data}
			
		\end{table}

	\end{landscape}
	
	\clearpage
	
}

\afterpage{%
	\clearpage
	\thispagestyle{empty}
	\begin{landscape}
		
		\begin{table}
			
			\small
			
			\centering
			
			\begin{tabular}{l l l l c c c}
				\hline
				
				$\Na$ & $\No$ & $W_d$ & Action of $\Gamma$ & Axial $\Sigma$ & $\Fix(\Sigma)$ & Reference \Bstrut\Tstrut \\
				
				\hline
				
				3 & 2 & $\R\{v_1,v_2\}$ & $\Z_2(\tau)\times \ES_3(\kappa,\rho)\cong\D_6$ & $\Z_2(\kappa)$ & (\ref{EQ: 3a 2o dis fixed}a) & \cite[Section~XIII.5]{Golubitsky1985-2} \Tstrut \\
				
				& & & $\tau \Zz=-\Zz$
				& $\Z_2(\tau\kappa)$ & (\ref{EQ: 3a 2o dis fixed}b) \\
				
				& & & $\kappa(\Zz_1,\Zz_2,\Zz_3)=(\Zz_{2},\Zz_{1},\Zz_{3})$ & & \\
				
				& & & $\rho(\Zz_1,\Zz_2,\Zz_3)=(\Zz_{3},\Zz_{1},\Zz_{2})$ & & \Bstrut \\
				
				\hline
				
				2 & 3 & $\R\{u_1,u_2\}$ & $\Z_2(\tau)\times \ES_3(\kappa,\rho)\cong\D_6$ & $\Z_2(\kappa)$ & (\ref{EQ: 2a 3o dis fixed}a)  & \cite[Section~XIII.5]{Golubitsky1985-2} \Tstrut \\
				
				& & $u_i=(v_i,-v_i)\in (\R^3)^2$ & $\tau(\Zz_1,\Zz_2)=(\Zz_2,\Zz_1)=-\Zz$ & $\Z_2(\tau\kappa)$ &  (\ref{EQ: 2a 3o dis fixed}b)  \\
				
				& & & $\kappa\Zz=(\kappa\Zz_1,\kappa\Zz_2)$ & & \\
				
				& & & $\rho\Zz=(\rho\Zz_1,\rho\Zz_2)$ & & \\
				
				& & & $\kappa\Zz_i=(z_{i2},z_{i1},z_{i3})$ \\
				
				& & & $\rho\Zz_i=(z_{i3},z_{i1},z_{i2})$ 
				\Bstrut \\

				\hline
				
				$n$ & 2 & $\R\{v_1,\ldots,v_{n-1}\}$ & $ \Z_2(\tau)\times \ES_n(\kappa,\rho)$ & $\Sigma_k$ \eqref{EQ: n 2 diss modext axial} & \eqref{EQ: n 2 diss modext axial fix} & \cite{Aronson1991} \Tstrut \\
				
				& & & $\tau\Zz=-\Zz$ &  $T_l$  \eqref{EQ: n 2 diss uniform axial} & \eqref{EQ: n 2 diss uniform axial fix} \\
				
				& & & $\kappa\Zz=(\Zz_2,\Zz_1,\Zz_3,\ldots,\Zz_n)$ & \\
				
				& & & $\rho\Zz=(\Zz_n,\Zz_1,\Zz_2,\ldots,\Zz_{n-1})$ & \Bstrut\\
				
				\hline
				
				2 & $n$ & $\R\{u_1,\ldots,u_{n-1}\}$ & $\Z_2(\tau)\times \ES_n(\kappa,\rho)$   &$\Sigma_k$ \eqref{EQ: n 2 diss modext axial} & \eqref{EQ: n 2 diss modext axial fix} & \cite{Aronson1991}  \Tstrut\\ 
				
				& & $u_i=(v_i,-v_i)\in(\R^n)^2$ & $\tau(\Zz_1,\Zz_2)=(\Zz_2,\Zz_1)=-\Zz$ & $T_l$  \eqref{EQ: n 2 diss uniform axial} & \eqref{EQ: n 2 diss uniform axial fix} \\
				
				& & & $\kappa\Zz=(\kappa\Zz_1,\ldots,\kappa\Zz_n)$ &  \\
				
				& &  & $\rho\Zz=(\rho\Zz_1,\ldots,\rho\Zz_n)$ & \\
				
				& & & $\kappa\Zz_i=(z_{i2},z_{i1},z_{i3},\ldots,z_{in})$ \\
				
				& & & $\rho\Zz_i=(z_{in},z_{i1},z_{i2},\ldots,z_{i(n-1)})$ \\
				
				\hline\vspace{-3mm}
			\end{tabular}
			\caption{Algebraic data for dissensus opinion formation. In the table, $v_1,\ldots,v_{n-1}\in\R^n $ denotes a basis of $V_n$, as defined in~\eqref{EQ: zero sum subspace}. Algebraic data whose expression was too big to be put in the table is defined in the text at the given reference. }\label{TAB: dissensus data}
		\end{table}
		
	\end{landscape}
	\clearpage
}

\afterpage{%
	\clearpage
	\thispagestyle{empty}
	\begin{landscape}
		
		\begin{table}
			
			\small
			
			\centering
			
			\begin{tabular}{l l l l c c }
				\hline
				$\Na$ & $\No$ & $W_d$ & Action of $\Gamma$ & Axial $\Sigma$ & $\Fix(\Sigma)$  \Bstrut\Tstrut \\ 
				
				\hline
				
				3 & 3 & $\R\{u_{11},u_{12},u_{21},u_{22}\}$ & $\Gamma=\ES_3(\kappa,\theta)\times\ES_3(\bar\kappa,\rho)$ & $\Z_2\times\Z_2$ \eqref{EQ: 3-3 product} &  \eqref{EQ: 3-3 product fix} \Tstrut \\
				
				& & $u_{ij}=v_j\otimes w_i\in\R^3\otimes\R^3\cong (\R^3)^3$ & $\kappa(\Zz_1,\Zz_2,\Zz_3)=(\kappa\Zz_1,\kappa\Zz_2,\kappa\Zz_3)$
				& $\Z_3\times\Z_2$ \eqref{EQ: 3-3 S3} & \eqref{EQ: 3-3 S3 fix} \\
				
				& & & $\theta(\Zz_1,\Zz_2,\Zz_3)=(\theta\Zz_1,\theta\Zz_2,\theta\Zz_3)$ & & \\
				
				& & & $\kappa\Zz_i=(z_{i2},z_{i1},z_{i3})$ & &  \\
				
				& & & $\theta\Zz_i=(z_{i3},z_{i1},z_{i2})$ & &  \\
				
				& & & $\bar\kappa(\Zz_1,\Zz_2,\Zz_3)=(\Zz_{2},\Zz_{1},\Zz_{3})$ & & \\
				
				& & & $\rho(\Zz_1,\Zz_2,\Zz_3)=(\Zz_{3},\Zz_{1},\Zz_{2})$ & & \Bstrut \\
				
				\hline
				
				$n$ & 3 & $\R\{u_{11},u_{12},u_{21},u_{22},\ldots, u_{(n-1)1},u_{(n-1)2}\}$ & $\Gamma=\ES_3(\kappa,\theta)\times\ES_n(\bar\kappa,\rho)$ & $\Sigma_m^{\times}$ \eqref{EQ: n-3 product axial} &   \eqref{EQ: n-3 product axial fix} \Tstrut \\
				
				& & $u_{ij}=v_j\otimes w_i\in\R^3\otimes\R^n\cong(\R^3)^n$ &  $\kappa(\Zz_1,\ldots,\Zz_n)=(\kappa\Zz_1,\ldots,\kappa\Zz_n)$
				& $\Sigma^{\Z_2}$ \eqref{EQ: n-3 graph Z2 axial} & \eqref{EQ: n-3 graph Z2 axial fix} \\
				
				& & & $\theta(\Zz_1,\ldots,\Zz_n)=(\theta\Zz_1,\ldots,\theta\Zz_n)$ & $\Sigma_m^{\ES_3}$ \eqref{EQ: n-3 graph D3 axial} & \eqref{EQ: n-3 graph D3 axial fix} \\
				
				& & & $\kappa\Zz_i=(z_{i2},z_{i1},z_{i3})$ &  &  \\
				
				& & & $\theta\Zz_i=(z_{i3},z_{i1},z_{i2})$ & &   \\
				
				& & & $\bar\kappa(\Zz_1,\ldots,\Zz_n)=(\Zz_{2},\Zz_{1},\Zz_{3},\ldots,\Zz_n)$ &   \\
				
				& & & $\rho(\Zz_1,\ldots,\Zz_n)=(\Zz_{n},\Zz_{1},\Zz_{2},\ldots,\Zz_{n-1})$ & \Bstrut \\

				\hline
				
				3 & $n$ & $\R\{u_{11},\ldots, u_{1(n-1)}, u_{21},\ldots, u_{2(n-1)}\}$ & $\Gamma=\ES_3(\kappa,\theta)\times\ES_n(\bar\kappa,\rho)$ & $\Sigma_m^{\times}$ \eqref{EQ: n-3 product axial} &   \eqref{EQ: n-3 product axial fix} \Tstrut \\
				
				& & $u_{ij}=w_j\otimes v_i\in\R^n\otimes\R^3\cong(\R^n)^3$ &  $\kappa(\Zz_1,\Zz_2,\Zz_3)=(\Zz_2,\Zz_1,\Zz_3)$
				&  $\Sigma^{\Z_2}$ \eqref{EQ: n-3 graph Z2 axial} & \eqref{EQ: n-3 graph Z2 axial fix}  \\
				
				& & & $\theta(\Zz_1,\Zz_2,\Zz_3)=(\Zz_3,\Zz_1,\Zz_2)$ & $\Sigma_m^{\ES_3}$ \eqref{EQ: n-3 graph D3 axial} & \eqref{EQ: n-3 graph D3 axial fix}   \\
				
				& & & $\bar\kappa(\Zz_1,\Zz_2,\Zz_3)=(\bar\kappa\Zz_1,\bar\kappa\Zz_2,\bar\kappa\Zz_3)$ & & \\
				
				& & & $\rho(\Zz_1,\Zz_2,\Zz_3)=(\rho\Zz_1,\rho\Zz_2,\rho\Zz_3)$ & &  \\
				
				& & & $\kappa\Zz_i=(z_{i2},z_{i1},z_{i3},\ldots,z_{in})$ &  &   \\
				
				& & & $\rho\Zz_i=(z_{in},z_{i1},z_{i2},\ldots,z_{i(n-1)})$ &   & 
				
				\Bstrut \\
				
				\hline
				
				$n$ & $k$ & $\R\{u_{11},\ldots, u_{1(k-1)}, u_{21},\ldots, u_{2(k-1)},\ldots,u_{n1},\ldots, u_{n(k-1)}\}$ & $\Gamma=\ES_n(\kappa,\theta)\times\ES_n(\bar\kappa,\rho)$ & ${\displaystyle\star}$ &   \Tstrut \\
				
				& & $u_{ij}=v_j\otimes w_i\in\R^k\otimes\R^{n}\cong(\R^k)^n$ &  $\kappa(\Zz_1,\ldots,\Zz_n)=(\Zz_2,\Zz_1,\Zz_3,\ldots,\Zz_n)$
				& $\Sigma_m^{\times}$ \eqref{EQ: n-3 product axial}  &  \eqref{EQ: n-3 product axial fix} \\
				
				& & & $\theta(\Zz_1,\ldots,\Zz_n)=(\Zz_n,\Zz_1,\Zz_2,\ldots,\Zz_{n-1})$ & $\Sigma^{\Z_2}$ \eqref{EQ: n-3 graph Z2 axial}  &\eqref{EQ: n-3 graph Z2 axial fix}   \\
				
				& & & $\bar\kappa(\Zz_1,\ldots,\Zz_n)=(\bar\kappa\Zz_1,\ldots,\bar\kappa\Zz_n)$ & $\Sigma_m^{\ES_3}$ \eqref{EQ: n-3 graph D3 axial} &  \eqref{EQ: n-3 graph D3 axial fix}  \\
				
				& & & $\rho(\Zz_1,\ldots,\Zz_n)=(\rho\Zz_1,\ldots,\rho\Zz_n)$ & &  \\
				
				& & & $\kappa\Zz_i=(z_{i2},z_{i1},z_{i3},\ldots,z_{ik})$ &  &   \\
				
				& & & $\rho\Zz_i=(z_{ik},z_{i1},z_{i2},\ldots,z_{i(k-1)})$ &   & 
				
				\Bstrut \\
				
				\hline\vspace{-3mm}
			\end{tabular}
			\caption{Algebraic data for dissensus opinion formation. In the table, $v_1,\ldots,v_k\in\R^k $ denotes a basis of $V_k$ and $w_1,\ldots,w_{n-1}\in\R^n $ denotes a basis of $V_n$, where $V_k$ and $V_n$ are defined as in~\eqref{EQ: zero sum subspace}. Given two matrices $A$ and $B$ of arbitrary sizes, $A\otimes B$ denotes the Kronecker product between them. $\star$ the list of axial might not be complete.  Algebraic data whose expression was too big to be put in the table is defined in the text at the given reference.}\label{TAB: dissensus data NEW}
		\end{table}
		
	\end{landscape}
	\clearpage
}

\bibliographystyle{siamplain}
\bibliography{references}
\end{document}